\newtheorem{Theorem}{Theorem}
\newtheorem{Lemma}{Lemma}
\newtheorem{Remark}{Remark}
\newtheorem{Assumption}{Assumption}
\newcommand{\ve}{\varepsilon}
\newcommand{\mb}{\mathbb}
\newcommand{\mc}{\mathcal}
\crefname{hypothesis}{Hypothesis}{Hypotheses}
\title{Convergence of the harmonic balance method for smooth Hilbert space valued differential-algebraic equations\thanks{Submitted to the editors DATE.
\funding{This paper describes objective technical results and analysis. Any subjective views or opinions that might be expressed in the paper do not necessarily represent the views of the U.S. Department of Energy or the United States Government. Supported by the Laboratory Directed Research and Development program at Sandia National Laboratories, a multimission laboratory managed and operated by National Technology and Engineering Solutions of Sandia LLC, a wholly owned subsidiary of Honeywell International Inc. for the U.S. Department of Energy’s National Nuclear Security Administration under contract DE-NA0003525.  SAND2021-14706 O}}}
\author{Andrew Steyer\thanks{Center for Computing Research, Sandia National Laboratories, Albuquerque, NM 87185
  (\email{asteyer@sandia.gov}).}
\and Robert J. Kuether\thanks{Engineering Sciences Center, Sandia National Laboratories, Albuquerque, NM 87185 (\email{rjkueth@sandia.gov}).}}
\newcommand*{\addFileDependency}[1]{
  \typeout{(#1)}
  \@addtofilelist{#1}
  \IfFileExists{#1}{}{\typeout{No file #1.}}
}
\newcommand*{\myexternaldocument}[1]{%
    \externaldocument{#1}%
    \addFileDependency{#1.tex}%
    \addFileDependency{#1.aux}%
}
\begin{document}

\maketitle

\begin{abstract}
We analyze the convergence of the harmonic balance method for computing isolated periodic solutions of a large class of continuously differentiable Hilbert space valued differential-algebraic equations (DAEs).  We establish asymptotic convergence estimates for (i) the approximate periodic solution in terms of the number of approximated harmonics and (ii) the inexact Newton method used to compute the approximate Fourier coefficients.  The convergence estimates are determined by the rate of convergence of the Fourier series of the exact solution and the structure of the DAE.  Both the case that the period is known and unknown are analyzed, where in the latter case we require enforcing an appropriately defined phase condition.  The theoretical results are illustrated with several numerical experiments from circuit modeling and structural dynamics.
\end{abstract}

\begin{keywords}
harmonic balance, convergence, convergence rate, periodic, spectral method, differential-algebraic equation, Fourier, Galerkin, Fourier-Galerkin
\end{keywords}

\begin{AMS}
 65T40, 35C27, 65L80, 65L60, 65N30, 65N35 
\end{AMS}

\section{Introduction}

Periodic motion arises across a diverse range of mathematical models in engineering and the physical, biological, and social sciences.  Rigorous analysis of numerical methods for approximating periodic solutions to differential equations and, more generally, differential-algebraic equations (DAEs) is therefore essential in computer modeling and simulation.  Harmonic balance (HB) is a Fourier-Galerkin method that approximates periodic solutions to DAEs by approximating their expansion as a Fourier series.  In this paper we analyze the convergence of the HB method for a large class of continuously differentiable Hilbert space valued DAEs where the period of the solution is either known or unknown.

The main contributions of this paper are  Theorems \ref{thm:known}-\ref{thm:unknown} which are stated and proved in Sections \ref{sec:known}-\ref{sec:unknown}.  Theorem \ref{thm:known} gives convergence estimates for the HB method when the period of the exact solution is known, while Theorem \ref{thm:unknown} gives convergence estimates when the period is an unknown.  These theorems cover two aspects of convergence of the HB method: (i) the asymptotic convergence rate in the $L^2$-norm of the approximate to the exact periodic solution as a function of the number of approximated harmonics; (ii) convergence of an inexact Newton iteration to compute the Fourier coefficients of the HB approximation. These asymptotic convergence rates are determined by the convergence rate of the Fourier series of the exact periodic solution and by constants related to the structure of the DAE.  We demonstrate these theoretical results in Section \ref{sec:examples} on several examples arising in circuit modeling and structural dynamics.
 
Theorems \ref{thm:known}-\ref{thm:unknown} are proved by applying an inexact Newton-Kantorovich Theorem (Theorem  6.3 of \cite{FerreiraSvaiter2012}) using several fundamental estimates that we establish in Section \ref{sec:sharedestimates}.  The main hypotheses we employ are: (i) the DAE satisfies certain Lipschitz estimates and is also continuously differentiable in the state variables with the partial derivatives satisfying an additional weak differentiablity hypothesis (see Assumptions 
\ref{asp:Fasp}-\ref{asp:KPUP}) and (ii) the exact periodic solution satisfies an isolatedness condition (see Assumptions \ref{asp:known}-\ref{asp:unknown}).  When the period is unknown we enforce a phase condition that satisfies some additional mild hypotheses (see Section \ref{sec:unknown}).  Convergence with respect to other norms (e.g. sup-norm) is not analyzed since $L^2$ is the natural space for Fourier analysis and modifying the results herein for alternative norms is mostly a technical exercise.  We leave for future studies the relaxation of the smoothness assumptions (including implications related to solvers) and stability considerations.  

The theory we develop is a major advance in three ways: (i) the convergence analysis explicitly includes an inexact Newton procedure used for the nonlinear solves; (ii) results are formulated in terms of Hilbert space valued DAEs; (iii) both cases that the period is known and unknown are analyzed within the same framework.  Analysis of inexact versus exact Newton solvers is motivated by their use in applications (e.g. \cite{BlahosEtAl2020,DongPeng2009,RizzoliEtAl1997}) and necessitated by the fact that the HB method requires evaluating a residual defined by Fourier integrals.  The Hilbert space formulation makes our analysis applicable in a variety of contexts where HB is used: integral equations (e.g. \cite{KelleyMukundan1993}), partial differential equations (PDEs) (e.g. \cite{Huang2016}), as well as traditional finite-dimensional ODE and DAE problems.  Analysis of the HB method is relatively straightforward to generalize to abstract Hilbert space value problems since its structure only requires evaluation of a residual function.  The case where the period is unknown is important, especially for autonomous equations in a continuation framework (see e.g. \cite{Moore2005}).  Our results provide a unified foundation for further analysis of how the HB method is affected by spatial discretization error, solver considerations, and approximation errors.  

The HB method has been deployed in a wide range of applications, including sensitivity analysis \cite{sierraetal2021}, structural dynamics \cite{KrackGross}, electromagnetics and power systems \cite{LuZhaoYamada}, and fluid dynamics \cite{HallEtAl2013}. To the best of our knowledge, the earliest formal convergence results for the HB method are found in a paper due to Urabe \cite{Urabe1965}.  The results in \cite{Urabe1965} establish, for periodic solutions of finite-dimensional first-order ODEs where the period is known, the convergence of the HB solution to the exact solution (in terms of number of harmonics) as well as convergence of an exact Newton iteration to approximate the Fourier coefficients of the HB approximation.  However, the results in \cite{Urabe1965} do not establish the rate of convergence of the approximated solution to the true solution.  Additional convergence results found in the literature include extensions of the results of Urabe to autonomous ODEs where the period is an unknown \cite{Shinohara1981,Stokes1972} and analysis of the HB method for a class of integral equations \cite{KelleyMukundan1993}.  There are also a number of papers (e.g. \cite{Garcia-SaldanaGasull2013,kogelbreunung2021,Urabe1965}) using techniques similar to ours that are concerned with proving backward error type results i.e. proving the existence of a true periodic solution to a ``nearby'' problem when the error of the HB method converges. 

Recently \cite{WangEtAl2021}, a convergence theory of the HB method was developed for a class of finite dimensional second-order DAEs modeling structural mechanical systems.  We build upon these results in several ways: (i) we prove convergence of a nonlinear solver to the HB solution; (ii) we formulate isolatedness conditions that guarantee coercivity of the linearized HB residual rather than making nonlinear coercivity an assumption; (iii) we consider general Hilbert space valued DAEs; (iv) we consider the case where the period is an unknown.  While the results in \cite{WangEtAl2021} allow for discontinuities in the equations, we anticipate generalizing our analysis to this case in future work.  

The remainder of this paper is organized as follows.  We define notation and discuss some preliminaries in Section \ref{sec:preliminaries}.  In Section \ref{sec:convergence} we state and prove some necessary estimates and lemmas (Section \ref{sec:sharedestimates}), formulate the isolatedness conditions (Assumptions \ref{asp:known}-\ref{asp:unknown} in Sections \ref{sec:known}-\ref{sec:unknown}), and prove the main convergence results (Theorems \ref{thm:known}-\ref{thm:unknown} in Sections \ref{sec:known}-\ref{sec:unknown})).  The theoretical results are demonstrated on several examples in Section \ref{sec:examples}.  We conclude the paper with some final remarks in Section \ref{sec:conclusion}.

\section{Preliminaries}\label{sec:preliminaries}

\subsection{Notation and conventions}\label{sec:notation}

We discuss some notation and conventions used for the remainder of the paper.  Fix a Hilbert space $X$ and denote its inner-product and the associated induced norm by $\left<\cdot , \cdot \right>$ and $\|\cdot\|$, respectively.  We use the following notation and conventions where $Y$ and $Z$ denote arbitrary Banach spaces with norms $\|\cdot\|_Y$ and $\|\cdot\|_Z$, respectively: 
\begin{itemize}
    \item $\mathbb{N} = \{1,2,3,4,5,\hdots\}$ and  $\mathbb{N}_0 = \{0,1,2,3,4,5,\hdots\}$.
    
    
    \item $Y^j := \underbrace{Y \times \hdots \times Y}_{j-\text{times}}$ where $j \in \mb{N}$.
    
    \item For $j \in \mb{N}$, we denote the inner product and associated induced norm on the Hilbert spaces $X^j$ and $X^j \times \mb{R}$ using the same symbols $\left<\cdot,\cdot\right>$ and $\|\cdot\|$ that represent the inner product and induced norm on $X$:
    \begin{equation*}
        \begin{array}{cc}
        \left<x,y\right> = \sum_{n=1}^{j}\left<x_n,y_n\right>, &         \left<(x,\tau),(y,s)\right> = \sum_{n=1}^{j}\left<x_n,y_n\right> + \tau s, \\
          \|x\| = \left(\sum_{n=1}^{j}\|x_n\|^2\right)^{1/2}, &  \|(x,\tau)\| = \left(\sum_{n=1}^{j}\|x_n\|^2 + |\tau|^2\right)^{1/2} 
        \end{array}
    \end{equation*}
    where $\tau,s \in \mb{R}$ and $x,y \in X^j$ with $x= (x_1,\hdots,x_j)$ and $y = (y_1,\hdots,y_j)$.
    
    \item For each $j \in \mb{N}_0$ we denote the closed balls of radius $\xi \geq 0$ centered at $x \in X^j$ and $(x,\tau) \in X^j \times \mb{R}$ by:
$$\begin{array}{lcr}
\mc{B}(x,\xi) =\{y \in X^j : \|y-x\| \leq \xi\},\\[3pt]
\mc{B}((x,\tau),\xi) =\{(y,s) \in X^j \times \mb{R} : \|(y,s)-(x,\tau)\| \leq \xi\}.
\end{array}$$

    \item Let $U \subseteq \mb{R}$ be an open set and $j \in \mb{N}$.  We let $v^{(j)}$ denote the $j^{th}$-order derivative of a $j$-times differentiable function $v: U \rightarrow X$ and define $v^{(0)} = v$.  We let $d^j v$ denote the $j^{th}$-order weak derivative of a $j$-times weakly differentiable function $v:U \rightarrow X$ and define $d^0 v = v$.
    
    \item We use the following function space notation where $U \subseteq Y$ is open and $j \in \mb{N}$:
    
    \begin{itemize}
    
      \item $C^0(U,Z) = \{f :U \rightarrow Z: f \text{ is continuous on $U$}\}$,
    
     \item $C^j(U,Z) = \{f :U \rightarrow Z: f^{(j)} \text{ exists and is continuous on $U$}\}$,
     
    \item $C^{\infty}(U,Z) = \{f :U \rightarrow Z: f \in C^l(U,Z) \text{ for all } l \in \mb{N}\}$, 
     
     
     \item $L^2 = \{ f:\mb{R} \rightarrow X : f(t) = f(t+1) \text{ a.e. and } \int_0^1 \|f(t)\|^2 dt < \infty\}$,
     
     \item $H^0 = L^2$,
     
     \item $H^j =  
        \{f \in L^2 : d^l f \text{ exists on (0,1)}, \int_0^1 \|d^l f(t)\|^2dt < \infty, \forall 0 \leq l \leq j\},$
     
     \item $C^0 = C^0(\mb{R},X)$, $C^j = C^j(\mb{R},X)$, $C^{\infty} = C^{\infty}(\mb{R},X)$, $\tilde{C}^j = C^j \cap H^{j+1}$.
     
    \end{itemize}
    
   Norms on the spaces $L^2$ and $H^j$ where $j \in \mb{N}_0$ are defined by:
   
   \begin{itemize}
      \item $\|f\|_{L^2} = \left( \int_0^1 \|f(t)\|^2 dt\right)^{1/2}$ for all $f \in L^2$.
       
      \item $\|f\|_{H^j} = \left(\sum_{l=0}^{j} \|d^l f\|_{L^2}^2\right)^{1/2}$ for all $f \in H^j$.
   \end{itemize}
    
    \item $\mathcal{L}(Y,Z)$ denotes the Banach space of all bounded linear maps $Y\rightarrow Z$ with norm $\|\cdot\|_{\mc{L}(Y,Z)}$ defined by the operator norm induced by $\|\cdot\|_Y$ and $\|\cdot\|_Z$. We let $\mc{L}(Y) = \mc{L}(Y,Y)$.  For $j \in \mb{N}$ we use the following abbreviated notation when the context is clear:
    $$\|\cdot\|_{\mc{L}(X^j)} = \|\cdot\|, \quad \|\cdot\|_{\mc{L}(X^j \times \mb{R})} = \|\cdot\|, \quad \|\cdot\|_{\mc{L}(\mb{R},X)} = \|\cdot\|,$$
    $$\|\cdot\|_{\mc{L}(H^j)} = \|\cdot\|_{H^j}, \quad \|\cdot\|_{\mc{L}(L^2)} = \|\cdot\|_{L^2}.$$

\end{itemize}
 
\subsection{Fourier series of $X$ valued functions} 

If $v \in L^2$, then $v$ has a representation as a convergent Fourier series (see e.g. \cite{ArendtBu,DymMckean}):
\begin{equation}\label{eq:genericfourierexpansion}
\left\{
\begin{array}{lcr}
v(t) = v_{0} + \sum_{j=1}^{\infty}\left(\sqrt{2}v_{2j}\cos(2\pi j t) + \sqrt{2}v_{2(j-1)+1} \sin(2\pi jt) \right),\\[3pt]
v_{0} = \int_0^1 v(t) dt \in X,  \quad v_{2j} =  \int_0^1 v(t) \cos(2\pi jt) dt \in X, \quad j \in \mb{N}, \\[3pt]
v_{2(j-1)+1} = \int_0^1 v(t) \sin(2\pi jt) dt \in X, \quad j \in \mb{N}.
\end{array}\right.
\end{equation}
We will always use the real as opposed to complex form of the Fourier series.  The coefficients of $\sqrt{2}$ in the summation in \eqref{eq:genericfourierexpansion} are chosen to simplify the estimates in Lemma \ref{lem:Qlem1} in Section \ref{sec:preliminaries}.  Note that the integrals in \eqref{eq:genericfourierexpansion} are $X$ valued so-called Bochner integrals.  For $N \in \mb{N}_0$ we let $P_{N}:L^2 \rightarrow C^{\infty}$ denote the projection operator of Fourier series onto their first $N$ harmonics: with $v$ as in \eqref{eq:genericfourierexpansion} we define:
$$P_0(v) = v_{0}, \quad P_{N}(v) = v_{0} + \sum_{j=1}^{N}\left(\sqrt{2}v_{2j}\cos(2\pi j t) + \sqrt{2} v_{2(j-1)+1} \sin(2\pi jt) \right), \quad N \in \mb{N}.$$
The following bound is a consequence of Bessel's inequality:
\begin{equation}\label{eq:besselbound}
\|P_N\|_{L^2} \leq 1.
\end{equation}
For each $N \in \mathbb{N}_0$ we define $Q_{N}:X^{2N+1} \rightarrow C^{\infty}$ by:
\begin{equation}\label{eq:QN}
Q_{N}(x)(t) = x_{0} + \sum_{j=1}^{N}\left(\sqrt{2} x_{2j} \cos(2\pi j t) + \sqrt{2} x_{2(j-1)+1} \sin(2\pi j t) \right)
\end{equation}
where $x = (x_0,x_1,\hdots,x_{2N}) \in X^{2N+1}$.  We remark that $Q_N$ is always invertible since two trigonometric series are equal if and only if their coefficients are the same.  For $j \in \mb{N}_0$, denote by $Q_{N}^{(j)}(x)$ the $j^{th}$-order derivative of $Q_{N}(x) = Q_{N}(x)(t)$ with respect to $t$, which is well-defined everywhere since $Q_N(x) \in C^{\infty}$ for all $x \in X^{2N+1}$.  Denote  by $DQ_{N}^{(j)}$ the derivative of $Q_{N}^{(j)} = Q_{N}^{(j)}(x)$ with respect to $x$, whenever it exists.  We shall show (Lemma \ref{lem:Qlem1}) that $D Q_N^{(j)}(x)$ exists for all $x \in X^{2N+1}$ and $j \in \mb{N}_0$.

\subsection{An $X$ valued DAE}\label{sec:dae}
Fix $k \in \mb{N}$ and $G:X^{k+1} \times \mb{R} \rightarrow X$ for the remainder of the paper.  Consider the following DAE:
\begin{equation}\label{eq:dae1}
G(u , u^{(1)},\hdots,u^{(k)} , t) = 0, \quad u = u(t) \in X.
\end{equation}
For $j \in \{1,\hdots,k+1\}$ we use the symbol $\partial_j G$ to denote the partial derivative of $G$ with respect to its $j^{th}$ argument, whenever this derivative exists.  Assuming $\partial_j G$ is well-defined, we let $\partial_j G(x_0,\hdots,x_k,t_0)$ denote $\partial_j G$ evaluated at $(x_0,\hdots,x_k,t_0) \in X^{k+1} \times \mb{R}$.  We will leave the DAE \eqref{eq:dae1} in higher-order form since: (i) conversion to first-order form may not result in equivalent algorithmic formulations and (ii) it may sometimes be more computationally efficient to leave \eqref{eq:dae2} in higher order form than to reduce the order by increasing the dimension.  

If $\tau > 0$, then $q_1 \in C^k(\mathbb{R},X)$ is a $\tau$-periodic solution to \eqref{eq:dae1} if and only if $q_2 \in C^k(\mb{R},X)$ defined by $q_2(t):= q_1 (\tau t)$ is a $1$-periodic solution to the rescaled DAE:
\begin{equation}\label{eq:dae2}
G(u,u^{(1)} \tau^{-1},\hdots,  u^{(k)} \tau^{-k}, \tau t) = 0.
\end{equation}
Motivated by this, we follow the traditional approach of approximating a $\tau$-periodic solution $q_1$ to \eqref{eq:dae1} by approximating a $1$-periodic solution $q_2$ to \eqref{eq:dae2}.  Given $\tau > 0$ and $q \in \tilde{C}^k$ we let $F(q,\tau)$ denote the map defined by:
$$t \mapsto G( q(t),q^{(1)}(t) \tau^{-1},\hdots, q^{(k)}(t) \tau^{-k},\tau t)$$ 
with $F(q(t),\tau)$ denoting the value of $F(q,\tau)$ at $t \in \mb{R}$.   If $\partial_j G$ is well-defined, then for $j=1,\hdots,k+1$ we denote by $\partial_j F(q,\tau)$ the map defined by:
$$t \mapsto \partial_j G(q(t),q^{(1)}(t)\tau^{-1},\hdots,q^{(k)}(t)\tau^{-k},\tau t)$$
 and let $\partial_j F(q(t),\tau)$ denote the value of $\partial_j F(q,\tau)$ at $t$.  For the remainder of this paper we assume the following regarding the existence of a periodic solution $p$ to \eqref{eq:dae1} and the convergence rate  of its Fourier series.
\begin{Assumption}\label{asp:Fasp}
There exists $T > 0$ and $p \in L^2 \cap C^k$ where:
     \begin{enumerate}
        \item $F(p(t),T)= 0 \quad \forall t \in \mb{R}$.
           \item There exists a sequence $\{K_p(N)\}_{N=0}^{\infty} \subseteq [0,\infty)$ with  $\underset{N\rightarrow \infty}{lim}K_p(N) = 0$ and \begin{equation}\label{eq:pfourierbound}
          \|p-p_N\|_{H^k} \leq K_p(N), \quad p_N := P_N(p), \quad N \in \mb{N}_0.
        \end{equation}
        \item For each $N \in \mb{N}_0$, $\alpha_N$ is the unique element of $X^{2N+1}$ so that $p_N = Q_N(\alpha_N)$.
        \end{enumerate}
\end{Assumption}
We also make the following assumptions on $G$ and $F$, where KP abbreviates ``known period'' and UP abbreviates ``unknown period''.
\begin{Assumption}\label{asp:KPUP}
The DAE \eqref{eq:dae1} satisfies either the KP-assumption or the UP-assumption which are defined as follows:
\begin{enumerate}
    \item  KP-assumption: $T$ is known, $G(\cdot,\hdots,\cdot,t) \in C^1(X^{k+1},X) \text{ } \forall t \in \mb{R}$, and:
    \begin{enumerate}
        \item For all $q \in \tilde{C}^k$ and $j=1,\hdots,k+1$:
        $$F(q,T) \in L^2, \quad \partial_j F(q,T) \in \mc{L}(\tilde{C}^0).$$
        \item For each $j\in \{1,\hdots,k+1\}$ there exist $K_{j,1},\hdots,K_{j,k+1} \geq 0$ so that if $q_1,q_2 \in \tilde{C}^k$ are such that $\|q_n - p\|_{H^k} \leq 1$ for $n=1,2$, then:
    \begin{equation}\label{eq:DFlip1}
    \|[\partial_{j} F(q_1,T) -  \partial_{j}
      F(q_2,T)]\|_{L^2}  \leq T^{j-1}\sum_{l=1}^{k+1} K_{j,l}\|q_1^{(l-1)}-q_2^{(l-1)}\|_{L^2}.
    \end{equation}
    \end{enumerate}
    
    \item UP-assumption: $T$ is an unknown, $G(\cdot,\hdots,\cdot,t) \in C^1(X^{k+1},X)\text{ } \forall t \in \mb{R}$, and:
    \begin{enumerate}
        \item For all $q \in \tilde{C}^k$, $\tau > 0$, and $j=1,\hdots,k+1$:
        $$F(q,\tau) \in L^2, \quad \partial_j F(q,\tau) \in \mc{L}(\tilde{C}^0).$$
        
      \item There exist nonnegative constants $\{K_{j,l,1}\}_{j,l=1}^{k+1}$, $\{K_{j,l,2}\}_{j,l=1}^{k+1}$, $\{K_{j,1}'\}_{j=1}^{k+1}$, $\{K_{j,2}'\}_{j=1}^{k+1}$ so that if $(q_1,\tau_1),(q_2,\tau_2) \in \tilde{C}^k \times (0,\infty)$ are such that $\|q_n - p\|_{H^k} \leq 1$ and $|\tau_n - T| \leq T/2$ for $n=1,2$, then:
    \begin{align}\label{eq:DFlip2a}
      & \|\tau_1^{-(j-1)}\partial_{j} F(q_1, \tau_1)  - \tau_2^{-(j-1)}\partial_{j}F(q_2,\tau_2)\|_{L^2} \\
 \nonumber & \quad \quad \leq \sum_{l=1}^{k+1} K_{j,l,1} \|q_1^{(l-1)} - q_2^{(l-1)}\|_{L^2} + K_{j,1}' |\tau_1-\tau_2|, \\[3pt]
 \label{eq:DFlip2b}
 & \|(j-1) [\tau_1^{-j}\partial_j F(q_1,\tau_1)  q_1^{(j-1)} - \tau_2^{-j}\partial_j F(q_2,\tau_2)  q_2^{(j-1)}]\|_{L^2} \\
 \nonumber & \quad \quad \leq \sum_{l=1}^{k+1} K_{j,l,2}\|q_1^{(l-1)} - q_2^{(l-1)}\|_{L^2} + K_{j,2}'|\tau_1-\tau_2|.
 \end{align}
    \end{enumerate}
\end{enumerate}
\end{Assumption}
We remark that the KP-assumption implies that $G$ is $T$-periodic with respect to $t$ (due to our definition of $L^2$ in Section \ref{sec:notation}) and the UP-assumption implies that $G$ is constant in $t$ (since it implies $F(q,\tau) \in L^2$ for all $\tau > 0$, $q \in \tilde{C}^k$) and is therefore autonomous. The structure of the DAE determines the values of the Lipschitz constants in the KP- and UP-assumptions.  

\begin{remark}\label{rem:embeddingrem}
We remark now on a subtle point regarding the set $V$ defined as all elements $q \in \tilde{C}^k$ such that $\|q-p\|_{H^k}\leq 1$ is bounded in $H^k$.  Since $V$ is bounded in $H^k$, the vector-valued Rellich-Kondrachov Theorem \cite[Theorem 5.1]{Amann2000} implies that it is precompact in $L^2$.  Therefore, letting $Y$ denote an arbitrary Banach space, any function $g_0 \in C^0(V,Y)$ is bounded and any function $g_1 \in C^1(V,Y)$ satisfies a local Lipschitz estimate.
\end{remark}
To unify the analysis for the cases where the period $T$ is known or unknown, we define a function $\mc{F}:\tilde{C}^k \times (0,\infty) \rightarrow L^2$ by:
\begin{equation}\label{eq:defmcF}
\mc{F}(q,\tau) = \left\{\begin{array}{lcr}
F(q,T), \quad \text{$F$ satisfies the KP-assumption} \\[3pt]
F(q,\tau), \quad \text{$F$ satisfies the UP-assumption}
\end{array}\right.
\end{equation}
and we let $\mc{F}(q(t),\tau)$ denote $\mc{F}(q,\tau)$ evaluated at $t \in \mb{R}$.  Assumption \ref{asp:KPUP} implies that $\mc{F}$ is a well-defined map $\tilde{C}^k \times (0,\infty) \rightarrow L^2$ and 
\begin{equation}\label{eq:mcFisC1}
\mc{F} \in C^1(\tilde{C}^k \times (0,\infty), L^2).
\end{equation}
We define some additional notation where $q \in\tilde{C}^k$, $\tau > 0$, and $j \in \{1,\hdots,k+1\}$:
\begin{equation}\label{eq:defofAj}
A_j(q,\tau) = \left\{\begin{array}{lcr}
\partial_j F(q,\tau), \quad \text{$F$ satisfies the UP-assumption} \\[3pt]
\partial_j F(q,T), \quad \text{$F$ satisfies the KP-assumption}
\end{array}\right.
\end{equation}
\begin{equation}\label{eq:defmcF2}
A_{k+2}(q,\tau) = \frac{\partial}{\partial s}\Bigr |_{s=\tau} \mc{F}(q,s).
\end{equation}
We let $A_j(q(t),\tau)$ and $A_{k+2}(q(t),\tau)$ denote $A_j(q,\tau)$ and $A_{k+2}(q,\tau)$ evaluated at $t \in \mb{R}$, respectively.  Assumption \ref{asp:KPUP} and \eqref{eq:mcFisC1} imply that $A_j$ and $A_{k+2}$ are continuous everywhere in their arguments with $A_j(q,\tau) \in \mc{L}(\tilde{C}^0)$ and $A_{k+2}(q,\tau) \in \mc{L}(\mb{R},\tilde{C}^0)$. From this notation and Assumption \ref{asp:KPUP}, it follows that $A_j(q,\tau) x, A_{k+2}(q,\tau) s \in H^1$ for all $q \in \tilde{C}^k$, $\tau \in (0,\infty)$, $x \in X$, and $s \in \mb{R}$.  Fundamental results on the convergence rates of Fourier series imply that there exists a constant $D > 0$ so that:
\begin{equation}\label{eq:ImPbound.1}
\underset{t \in\mb{R}}{sup}\|[I-P_N]A_j(q(t),\tau)\|_{\mc{L}(X)} \leq D N^{-1/2} \|A_j(q,\tau)\|_{H^1}, \quad j=1,\hdots,k+1.
\end{equation}
\begin{equation}\label{eq:ImPbound.2}
\underset{t \in \mb{R}}{sup}\|[I-P_N]A_{k+2}(q(t),\tau)\|_{\mc{L}(\mb{R},X)} \leq D N^{-1/2} \|A_{k+2} (q,\tau)\|_{H^1}.
\end{equation}
We use Estimates \eqref{eq:ImPbound.1}-\eqref{eq:ImPbound.2} to define isolatedness assumptions for $p$.


\subsection{The harmonic balance residual and related functions}

We now describe the HB method.  If $\tau \in (0,\infty)$ and $q \in \tilde{C}^k$, then \eqref{eq:mcFisC1} implies that $\mc{F}(q,\tau) \in L^2$ and therefore:
\begin{equation}
\left\{
\begin{array}{lcr}
\mc{F}(q(t),\tau) = \alpha_0 + \sum_{j=1}^{\infty} \left(\sqrt{2}\alpha_{2j} \cos(2\pi j t) + \sqrt{2}\alpha_{2(j-1)+1} \sin(2\pi j t)\right),\\[3pt]
\alpha_0 = \int_0^1 \mc{F}(q(t),\tau) dt, \quad \alpha_{2j} = \int_0^1 \mc{F}(q(t),\tau) \cos(2\pi jt) dt, \quad j  \in \mb{N}, \\[3pt]
\alpha_{2(j-1)+1} =  \int_0^1 \mc{F}(q(t),\tau) \sin(2\pi jt) dt, \quad j \in \mb{N}.
\end{array}\right.
\end{equation}
Note that the integrals defining the Fourier coefficients are $X$ valued Bochner integrals.  For each $N \in \mb{N}_0$, $x \in X^{2N+1}$, $\tau \in(0,\infty)$, and $j\in \{1,\hdots,k+1\}$ we define several maps as follows:
\begin{equation}\label{eq:defofFNFNsENAN}
\begin{array}{lcr}
\mc{F}_N: X^{2N+1} \times (0,\infty) \rightarrow L^2, \quad  \mc{F}_N(x,\tau) = \mc{F}(Q_N(x),\tau), \\[3pt]
A_{N,j}:X^{2N+1} \times (0,\infty) \rightarrow \mc{L}(H^1), \quad A_{N,j}(x,\tau) = A_j(Q_N(x),\tau), \\[3pt] 
A_{N,k+2}:X^{2N+1} \times (0,\infty) \rightarrow \mc{L}(\mb{R},H^1), \quad A_{N,k+2}(x,\tau) = A_{k+2}(Q_N(x),\tau).
\end{array}
\end{equation}
The above maps are all well-defined due to Assumption \ref{asp:KPUP} and the definitions of $\mc{F}$, $A_j$, and $A_{k+2}$ in Section \ref{sec:dae}.  We let $\mc{F}_N(x,\tau)(t)$, $A_{N,j}(x,\tau)(t)$, and $A_{N,k+2}(x,\tau)(t)$ denote the values of $\mc{F}_N(x,\tau)$, $A_{N,j}(x,\tau)$, and $A_{N,k+2}(x,\tau)$ at $t \in \mb{R}$, respectively.  For $M \in \mb{N}$ and $N \in \mb{N}_0$ we define $c_{M,N},s_{M,N}: X^{2N+1} \times (0,\infty) \rightarrow X$ by: 
\begin{equation}\label{eq:csdef}
\begin{array}{lcr}
c_{0,N}(x,\tau)   = \int_0^1 \mc{F}_N(x,\tau)(t) dt, \quad c_{M,N}(x,\tau) = \frac{2\sqrt{2}}{2}\int_0^1 \mc{F}_N(x,\tau)(t) \cos(2\pi M t)dt,\\[3pt]
s_{M,N}(x,\tau) = \frac{2\sqrt{2}}{2}\int_0^1 \mc{F}_N(x, \tau)(t) \sin(2\pi M t)dt.
\end{array}
\end{equation}
We define the HB residual $R_{N}: X^{2N+1} \times (0,\infty)\rightarrow X^{2N+1}$ by $R_0(x,\tau) = c_{0,0}(x,\tau)$ when $N = 0$, otherwise:
\begin{equation}\label{eq:Rn}
R_{N}(x,\tau) = (c_{0,N}(x,\tau),s_{1,N}(x,\tau),c_{1,N}(x,\tau),\hdots,s_{N,N}(x,\tau),c_{N,N}(x,\tau)), \quad N > 0.
\end{equation}
If $T$ is known, then the harmonic balance method approximates $T$-periodic solutions to \eqref{eq:dae1} by approximating a zero of $R_{N}(\cdot,T)$ for sufficiently large $N$.  If $T$ is unknown, then the harmonic balance method approximates a zero of $\hat{R}_{N}(\cdot,\cdot) = (R_{N}(\cdot,\cdot),\sigma(\cdot))$ for sufficiently large $N$, where $\sigma$ is a phase condition (see Section \ref{sec:unknown}).

In Lemma \ref{lem:RisC1} (Section \ref{sec:sharedestimates}) we show that $\mc{F}_N = \mc{F}_N(x,\tau)$, $c_{M,N}  = c_{M,N}(x,\tau)$, $s_{M+1,N} = s_{M+1,N}(x,\tau)$, and $R_N = R_N(x,\tau)$ are continuously differentiable everywhere for all $M,N \in \mb{N}_0$.  We denote the partial derivatives of $R_N = R_N(x,\tau)$ at $(x_0,\tau_0)$ with respect to $x$ and $\tau$ by $\partial_1 R_N(x_0,\tau_0)$ and $\partial_2 R_N(x_0,\tau_0)$, respectively.  We denote the total or Fr\'echet derivative of $R_N$ at $(x_0,\tau_0)$ by $D R_N(x_0,\tau_0)$.  Similar notation is used to denote the partial and Fr\'echet derivatives of $\mc{F}_N$, $c_{M,N}$ and $s_{M+1,N}$ with respect to $x$, $\tau$, and $(x,\tau)$.

\section{Convergence estimates}\label{sec:convergence}


\subsection{Shared estimates and lemmas}\label{sec:sharedestimates}

In this section (Section \ref{sec:sharedestimates}) we define several constants, outline the proofs of Theorems \ref{thm:known}-\ref{thm:unknown}, prove several fundamental estimates and lemmas, and state an inexact Newton-Kantorovich theorem (Theorem \ref{thm:inexactNK}) and a variant of the Lax-Milgram theorem (Theorem \ref{thm:laxmilgram}). These results are used to prove Theorems \ref{thm:known}-\ref{thm:unknown} in Sections \ref{sec:known}-\ref{sec:unknown}.

\subsubsection{Constants defined by $\mc{F}$ and outline of the proofs of main theorems}\label{sec:sharedconstants}

We define several constants related to $\mc{F}$ and subsequently outline the proofs of Theorems \ref{thm:known}-\ref{thm:unknown}.  Assumptions \ref{asp:Fasp}-\ref{asp:KPUP} and the definitions and properties of $\mc{F}$, $A_j$, and $A_{k+2}$ (see Section \ref{sec:dae}) imply that the following constants are well-defined for all $\ve > 0$ and $N\in \mb{N}_0$:
\begin{itemize}
     \item Define $J_1, J_2 > 0$ so that if $(q,\tau) \in \tilde{C}^k \times (0,\infty)$ is such that $\|q-p\|_{H^k} \leq 1$ and $|\tau - T| \leq T/2$, then:
      \begin{equation}\label{eq:defJ1J2}
        \|\mc{F}(q,\tau)\|_{L^2} \leq J_1 \|q-p\|_{H^k} + J_2 |\tau-T|.
      \end{equation}

    \item Define $\delta(\ve)$ to be the largest number in $(0,\text{min}\{1,T/2,\ve\}]$ so that if $(q,\tau) \in \tilde{C}^k \times (0,\infty)$ is such that $\|q-p\|_{H^k} \leq \delta(\ve)$ and $|\tau - T| \leq \delta(\ve)$, then:
    \begin{equation}\label{eq:AFest}
      \begin{array}{c}
        \underset{t \in \mb{R}}{\sup}\|A_j(q(t),\tau)-A_j(p(t),T)\|_{\mc{L}(X)} \leq \ve/2, \quad j=1,\hdots,k+1\\[3pt]
        \underset{t \in \mb{R}}{\sup}\|A_{k+2}(q(t),\tau) - A_{k+2}(p(t),T)\|_{\mc{L}(\mb{R},X)} \leq \ve/2.
      \end{array}
    \end{equation}

    \item Define $\zeta_N(\ve)$ and $\delta_N(\ve)$ by:
      \begin{equation}\label{eq:defdeltaepsN}
        \zeta_N(\ve) =\frac{\ve}{2}\left(\sum_{j=0}^{k}(2\pi N)^j \right)^{-1} , \quad \delta_N(\ve) = \zeta_N(\delta(\ve)).
      \end{equation}


\end{itemize}
This next set of constants is well-defined for all $\ve > 0$ due to Assumptions \ref{asp:Fasp}-\ref{asp:KPUP}, the definitions and properties of $\mc{F}$, $A_j$, and $A_{k+2}$ in Section \ref{sec:dae}, Estimates \eqref{eq:ImPbound.1}-\eqref{eq:ImPbound.2}, and the definitions of the constants $\delta(\ve)$ and $K_{A,1},\hdots,K_{A,k+1}$ defined above:
\begin{itemize}
    \item Define $N_1(\ve)$ to be the least element of $\mb{N}_0$ so that if $N \geq N_1(\ve)$ then:
      \begin{equation}\label{eq:N1def.1}
         K_p(N) \leq \ve/2
         \end{equation}
and if $(q,\tau) \in \tilde{C}^k \times (0,\infty)$ are such that $\|q-p\|_{H^k} \leq \delta(\ve)$ and $|\tau-T| \leq \delta(\ve)$, then for $j\in\{1,\hdots,k+1\}$:
\begin{equation}\label{eq:N1def.2}
\begin{array}{lcr}
         \underset{t\in\mb{R}}{\sup} \|[I-P_N] A_j(q(t),\tau)\|_{\mc{L}(X)}\leq \ve/2, \quad \\[3pt]
           \underset{t\in\mb{R}}{\sup} \|[I-P_N]A_{k+2}(q(t),\tau)\|_{\mc{L}(\mb{R},X)} \leq \ve / 2.
         \end{array}
      \end{equation}

    \item  If \eqref{eq:dae1} satisfies the KP-assumption, then define $M_1$ to be the maximum of the following set:
    \begin{equation}\label{eq:defm1}
       \{0\} \cup\{j+l-2 : j,l \in \{1,\hdots,k+1\} \text{ and } K_{j,l} > 0\}.
    \end{equation} 
    If \eqref{eq:dae1} satisfies the UP-assumption, then define $M_2$ to be the maximum of the following set:
    \begin{align}\label{eq:defm2}
      \{0\} & \cup \{j+l-2: j,l \in \{1,\hdots,k+1\} \text{ and } K_{j,l,1} > 0\}  \\
      \nonumber & \cup \{j-1 : j \in \{1,\hdots,k+1\} \text{ and } K_{j,1}' > 0 \} \\
      \nonumber & \cup \{l-1 : l \in \{1,\hdots,k+1\} \text{ and }  K_{j,l,2} > 0 \text{ for some } j\}.
    \end{align}

    \item     If \eqref{eq:dae1} satisfies the KP-assumption, then define $\rho_1 \in \mb{N}_0$ to be the smallest number such that the following holds:
    \begin{equation}\label{eq:rho1def}
      1 + (2\pi N) + \hdots (2\pi N)^k \leq (2\pi)^k(N+1)^{M_1 + \rho_1}, \quad N \in \mb{N}_0.
    \end{equation}
    The constant $\rho_1$ is used to define a sufficient condition on the convergence rate $K_p(N)$ in the hypotheses of Theorem \ref{thm:known}.  Another constant, $\rho_2$ is defined in Section \ref{sec:unknown} for use in the hypotheses of Theorem \ref{thm:unknown}.

\end{itemize}

Theorems \ref{thm:known} and \ref{thm:unknown} are proved by applying an inexact Newton-Kantorovich result (Theorem \ref{thm:inexactNK}) to $R_N(\cdot,T)$ and $\hat R_N = (R_N,\sigma_N) = (R_N(x,\tau),\sigma_N(x))$, respectively, where $\sigma_N$ is an appropriately defined phase condition (see Section \ref{sec:unknown}).  We briefly outline how the hypotheses of Theorem \ref{thm:inexactNK} are determined by the constants and assumptions defined above and in Sections \ref{sec:preliminaries} and \ref{sec:known}-\ref{sec:unknown}.  Fix some $\ve > 0$:

\begin{itemize}
   \item Given an initial guess $x^0$ (resp. $(x^0,\tau^0)$) for a zero of $R_N(\cdot,T)$ (resp. $\hat R_N$), the values of $J_1,J_2$ determine bounds in the proof of Theorem \ref{thm:known} (resp. Theorem \ref{thm:unknown}) for $\|R_N(x^0,T)\|$ (resp. $\|\hat R_N(x^0,\tau^0)\|$) in terms of  $\|Q_N(x^0)-p_N\|_{H^k}$ and $K_p(N)$ (resp. $\|Q_N(x^0)-p_N\|_{H^k}$, $K_p(N)$, and $|\tau^0-T|$).  

   \item The constants $\zeta_N(\ve)$ and $\delta_N(\ve)$ determine bounds on $\|Q_N(x^0)-p_N\|_{H^k}$ and the constants $N_1(\ve)$ and $N_1(\delta(\ve))$ determine bounds on $K_p(N)$.  We prove explicit bounds using Lemmas \ref{lem:Qlem1}-\ref{lem:Qlem2}.

  \item In Lemmas \ref{lem:RisC1}-\ref{lem:DxRlem} we show that the operators $\partial_1 R_N$ and $D R_N$ exist and are bounded.  Based on this, in Section \ref{sec:unknown} we show that the operator $D \hat R_N$ exists and is bounded.  Using Lemma \ref{lem:Alem1} together with Assumption \ref{asp:known} (resp. Assumption \ref{asp:unknown}) and Lemma \ref{lem:DxRinvboundknown} (resp. Lemma \ref{lem:DxRhatlem}), we show that $\partial_1 R_N(\cdot,T)$ (resp. $D\hat R_N$) is invertible with a bounded inverse.
  
  \item In Lemma \ref{lem:DxxRlem} we show that $\partial_1 R_N(\cdot,T)$ (resp. $D R_N$) is Lipschitz with Lipschitz constant $L_1 N^{M_1}$ (resp. $L_2 N^{M_2}$).  The constants $M_1$ and $M_2$ are defined above (\eqref{eq:defm1}-\eqref{eq:defm2}), $L_1$ (see \eqref{eq:DxxRlem.2a}) is defined from the set of constants $\{K_{j,l}\}_{j,l=1}^{k+1}$ from the KP-assumption, and $L_2$ (see \eqref{eq:defL2}) is defined from the sets of constants $\{K_{j,l,n}\}_{j,l=1,n=1}^{k+1,2}$ and $\{K_{j,n}'\}_{j=1,n=1}^{k+1,2}$ from the UP-assumption. 
  
\end{itemize}
The proofs of Theorem \ref{thm:known}-\ref{thm:unknown} essentially combine the above points (which we prove in Sections \ref{sec:Qlem}-\ref{sec:RFAlem} and \ref{sec:known}-\ref{sec:unknown}) so that the hypotheses of Theorem \ref{thm:inexactNK} are satisfied by $R_N(\cdot,T)$ and $\hat R_N$ for all sufficiently large $N$ and sufficiently accurate initial guesses. 

\subsubsection{Lemmas for $Q_N$}\label{sec:Qlem}

We now prove several lemmas related to $Q_N$ (defined by \eqref{eq:QN}).  In the following lemma we show that $DQ_N^{(j)}(x)$ exists for all $x \in X^{2N+1}$ and $j \in \mb{N}_0$ and establish several equalities and bounds related to $Q_{N}^{(j)}$ and $D Q_{N}^{(j)}$.
\begin{Lemma}\label{lem:Qlem1}
Let $j,N \in \mb{N}_0$.  Then $Q_{N}^{(j)} \in C^1(X^{2N+1},C^{\infty})$ and if $x,y \in X^{2N+1}$, then the following hold:
\begin{equation}\label{eq:Qlem1.0}
\|Q_{N}(x) - Q_{N}(y)\|_{L^2} = \|x-y\|, \quad \|Q_{N}(x)\|_{L^2} = \|x\|,
\end{equation}
\begin{equation}\label{eq:Qlem1.1}
\|Q_{N}^{(j)}(x) - Q_{N}^{(j)}(y)\|_{L^2} \leq (2\pi N)^j  \|x-y\|, \quad \|Q_N^{(j)}(x)\|_{L^2} \leq (2\pi N)^j \|x\|,
\end{equation}
\begin{equation}\label{eq:Qlem1.2}
DQ_N^{(j)}(y)x = Q_N^{(j)}(x), \quad \| DQ_{N}^{(j)}(x)\|_{L^2} \leq (2\pi N)^j.
\end{equation}
\end{Lemma}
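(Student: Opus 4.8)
The plan is to establish the three displayed equations by working directly with the explicit trigonometric representation of $Q_N$ in \eqref{eq:QN}, exploiting orthogonality of the trigonometric system on $[0,1]$ together with the $\sqrt{2}$ normalization built into the definition. First I would observe that $Q_N$ is linear in $x$, so $Q_N(x)-Q_N(y)=Q_N(x-y)$, and likewise each derivative $Q_N^{(j)}$ is linear in $x$; this immediately reduces all the ``difference'' estimates to the corresponding ``single-point'' estimates and will make the $DQ_N^{(j)}$ claims essentially automatic.

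For \eqref{eq:Qlem1.0}, I would compute $\|Q_N(x)\|_{L^2}^2 = \int_0^1 \|Q_N(x)(t)\|^2\,dt$ by expanding the square using the $X$-inner product and integrating term by term. The functions $1$, $\sqrt{2}\cos(2\pi j t)$, $\sqrt{2}\sin(2\pi j t)$ form an orthonormal family in the scalar $L^2(0,1)$ sense, so the cross terms vanish and each diagonal term contributes $\|x_m\|^2$; hence $\|Q_N(x)\|_{L^2}^2 = \sum_{m=0}^{2N}\|x_m\|^2 = \|x\|^2$, which is exactly the stated isometry (this is precisely why the $\sqrt{2}$ factors were inserted, as the text notes). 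The difference version follows by linearity. For \eqref{eq:Qlem1.1}, differentiating the series $j$ times with respect to $t$ multiplies the $n$-th harmonic's coefficient by a factor of modulus $(2\pi n)^j$ (up to a sign and a swap of $\sin$ and $\cos$), so by the same orthogonality computation $\|Q_N^{(j)}(x)\|_{L^2}^2 = \sum_{n=1}^N (2\pi n)^{2j}(\|x_{2n}\|^2 + \|x_{2(n-1)+1}\|^2) \le (2\pi N)^{2j}\|x\|^2$, using $n \le N$; again the difference estimate follows by linearity. For \eqref{eq:Qlem1.2}, since $Q_N^{(j)}$ is a bounded linear map $X^{2N+1}\to C^\infty \subseteq L^2$, it is its own Fréchet derivative at every point, giving $DQ_N^{(j)}(y)x = Q_N^{(j)}(x)$, and the operator-norm bound $\|DQ_N^{(j)}(x)\|_{L^2} \le (2\pi N)^j$ is just the second inequality in \eqref{eq:Qlem1.1} read as an operator bound. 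The claim $Q_N^{(j)} \in C^1(X^{2N+1},C^\infty)$ follows because a bounded linear map is smooth with constant (hence continuous) derivative.

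The only genuinely delicate points are bookkeeping ones: keeping the index pairing between the coefficient subscripts $\{2j, 2(j-1)+1\}$ and the harmonics straight when differentiating (a $j$-th $t$-derivative of $\cos(2\pi n t)$ alternates among $\pm\cos$, $\pm\sin$, which permutes the roles of $x_{2n}$ and $x_{2(n-1)+1}$ but does not change the magnitude of the coefficient), and confirming that term-by-term differentiation of the \emph{finite} sum defining $Q_N(x)$ is trivially valid since there is no convergence issue. I would also note that all integrals are $X$-valued Bochner integrals and that $\|\cdot\|$ being induced by an inner product is what makes the orthogonality argument go through verbatim in the Hilbert-space-valued setting. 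The main obstacle, to the extent there is one, is simply presenting the orthogonality computation cleanly for the $j$-th derivative rather than any conceptual difficulty; everything reduces to Parseval/Bessel for the orthonormal trigonometric system, applied coordinatewise in $X$.
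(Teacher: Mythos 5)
Your proposal is correct and follows essentially the same route as the paper's proof: reduce to the single-point case by linearity of $Q_N^{(j)}$, use orthonormality of the normalized trigonometric system for the isometry and the $(2\pi N)^j$ bounds, and invoke the fact that a bounded linear map is its own Fr\'echet derivative to get \eqref{eq:Qlem1.2}. The paper simply states \eqref{eq:Qlem1.0}--\eqref{eq:Qlem1.1} as ``a straightforward consequence of the definition'' and your write-up supplies the orthogonality computation that phrase is gesturing at.
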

\begin{proof}
The equalities in \eqref{eq:Qlem1.0} and inequalities in \eqref{eq:Qlem1.1} are a straightforward consequence of the definition of $Q_N$ and the choice of the norm $\|\cdot\|$ on $X^{2N+1}$.  Note that since $Q_{N}^{(j)} \in \mc{L}(X^{2N+1},C^{\infty})$ it follows that $Q_N^{(j)} \in C^1(X^{2N+1},C^{\infty})$ with $DQ_N^{(j)}(y)x = Q_N^{(j)}(x)$, establishing the equality in \eqref{eq:Qlem1.2}.  If $0 \neq w \in X^{2N+1}$, then  $DQ_N^{(j)}(x)w = Q_N^{(j)}(w)$ and \eqref{eq:Qlem1.1} then implies that:
$$\|DQ_{N}^{(j)}(x)w\|_{L^2} = \|Q_{N}^{(j)}(w)\|_{L^2} \leq (2\pi N)^j\|w\|.$$
Therefore $\|DQ_{N}^{(j)}\|_{L^2} \leq (2\pi N)^j$ which implies the estimates in \eqref{eq:Qlem1.1}.
\end{proof}
Lemma \ref{lem:Qlem1} together with Estimate \eqref{eq:pfourierbound} and the triangle inequality imply the following estimate for any $N \in \mb{N}_0$ and $x \in X^{2N+1}$:
\begin{equation}\label{eq:Qpbound}
\| Q_{N}(x) - p\|_{H^k} \leq K_p(N) + \sum_{j=0}^{k}(2\pi N)^j\|x - \alpha_N\|.
\end{equation}
The next lemma estimates how well $Q_N(x)$ approximates $p$ when $x$ is close to $\alpha_N$ and $N$ is sufficiently large.
\begin{Lemma}\label{lem:Qlem2}
If $\ve > 0$, $N \geq N_1(\ve)$, and $x \in \mc{B}_N(\alpha_N,\zeta_N(\ve))$, then:
\begin{equation}\label{eq:Qlem2.1}
\|Q_N(x) - p\|_{H^k} \leq \ve.
\end{equation}
\end{Lemma}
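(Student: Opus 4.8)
The plan is to combine the triangle-inequality estimate \eqref{eq:Qpbound} from Lemma \ref{lem:Qlem1} with the two defining properties of $N_1(\ve)$ and $\zeta_N(\ve)$. Recall that \eqref{eq:Qpbound} states
\begin{equation*}
\|Q_N(x) - p\|_{H^k} \leq K_p(N) + \sum_{j=0}^{k}(2\pi N)^j\|x - \alpha_N\|,
\end{equation*}
so it suffices to bound each of the two summands on the right-hand side by $\ve/2$.

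First I would handle the first term. Since by hypothesis $N \geq N_1(\ve)$, the defining property \eqref{eq:N1def.1} of $N_1(\ve)$ gives immediately $K_p(N) \leq \ve/2$.

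Next I would handle the second term. The hypothesis $x \in \mc{B}_N(\alpha_N,\zeta_N(\ve))$ means $\|x - \alpha_N\| \leq \zeta_N(\ve)$, and by the definition \eqref{eq:defdeltaepsN} we have $\zeta_N(\ve) = \frac{\ve}{2}\left(\sum_{j=0}^{k}(2\pi N)^j\right)^{-1}$. Therefore
\begin{equation*}
\sum_{j=0}^{k}(2\pi N)^j\|x - \alpha_N\| \leq \sum_{j=0}^{k}(2\pi N)^j \cdot \frac{\ve}{2}\left(\sum_{j=0}^{k}(2\pi N)^j\right)^{-1} = \frac{\ve}{2}.
\end{equation*}
Adding the two bounds yields $\|Q_N(x) - p\|_{H^k} \leq \ve/2 + \ve/2 = \ve$, which is \eqref{eq:Qlem2.1}.

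This proof is essentially a direct bookkeeping argument, so I do not anticipate a genuine obstacle; the only point requiring care is the well-definedness of the factor $\left(\sum_{j=0}^{k}(2\pi N)^j\right)^{-1}$, but this sum is at least $1$ (the $j=0$ term), so it is always positive and the quotient in the definition of $\zeta_N(\ve)$ causes no division-by-zero issue even when $N=0$. One should also note that the bound on $\|x-\alpha_N\|$ uses only $\zeta_N(\ve)$ and not $\delta_N(\ve)$, so there is no hidden dependence on the finer constant $\delta(\ve)$ here — that subtlety is deferred to later lemmas.
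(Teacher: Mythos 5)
Your proof is correct and is essentially the same argument as the paper's: both invoke estimate \eqref{eq:Qpbound}, then bound $\sum_{j=0}^{k}(2\pi N)^j\|x-\alpha_N\|$ by $\ve/2$ via the definition of $\zeta_N(\ve)$ in \eqref{eq:defdeltaepsN}, and bound $K_p(N)$ by $\ve/2$ via \eqref{eq:N1def.1} and $N \geq N_1(\ve)$. The bookkeeping and the remark about the sum being at least $1$ are both accurate.
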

\begin{proof}
We have:
\begin{align*}
\|Q_N(x)-p\|_{H^k} & \leq \sum_{j=0}^{k}(2\pi N)^j \|x - \alpha_N\| + K_p(N) && \text{(Estimate \eqref{eq:Qpbound})}\\
& = \ve \zeta_N(\ve)^{-1}\|x-\alpha_N\|/2 + K_p(N) && \text{(Definition of $\xi_N(\ve)$ in \eqref{eq:defdeltaepsN})}\\
& \leq \ve/2 + K_p(N) && \text{($\|x-\alpha_N\| \leq \zeta_N(\ve)$)} \\
& \leq \ve/2 + \ve/2 = \ve. && \text{(Estimate \eqref{eq:N1def.1}, $N \geq N_1(\ve)$)}
\end{align*}
\end{proof}

\subsubsection{Lemmas for $R_N$, $\mc{F}_N$, $A_{N,j}$, and $A_{N,k+2}$}\label{sec:RFAlem}

We next establish some estimates and lemmas related to $R_N$, $\mc{F}_N$, $A_{N,j}$, and $A_{N,k+2}$.  The following lemma determines bounds on the differences $P_N A_{N,j}(x,\tau)-A_j(p,T)$ and $P_N A_{N,k+2}(x,\tau)-A_{k+2}(p,T)$ in a form useful for establishing isolatedness conditions in Sections \ref{sec:known}-\ref{sec:unknown}.
\begin{Lemma}\label{lem:Alem1}
If $\ve >0$, $N \geq N_1(\delta(\ve))$, $x \in \mc{B}_N(\alpha_N,\delta_N(\ve))$, and $|\tau - T| \leq \delta(\ve)$, then:
\begin{equation}\label{eq:Alem1.2}
\begin{array}{lcr}
\underset{t \in \mb{R}}{\sup}\|P_N A_{N,j}(x,\tau)(t) - A_j(p(t),T)\|_{\mc{L}(X)} \leq \ve, \quad j=1,\hdots,k+1 \\[3pt]
\underset{t \in \mb{R}}{\sup}\|P_N A_{N,k+2}(x,\tau)(t)-A_{k+2}(p(t),T)\|_{\mc{L}(\mb{R},X)} \leq \ve.
\end{array}
\end{equation}
\end{Lemma}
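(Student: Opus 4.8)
The plan is to decompose the difference $P_N A_{N,j}(x,\tau)(t) - A_j(p(t),T)$ into two pieces and bound each separately. Write
\[
P_N A_{N,j}(x,\tau)(t) - A_j(p(t),T) = \bigl[P_N A_{N,j}(x,\tau)(t) - A_j(Q_N(x)(t),\tau)\bigr] + \bigl[A_j(Q_N(x)(t),\tau) - A_j(p(t),T)\bigr],
\]
recalling that $A_{N,j}(x,\tau) = A_j(Q_N(x),\tau)$ by \eqref{eq:defofFNFNsENAN}, so the first bracketed term is exactly $[P_N - I]A_j(Q_N(x)(t),\tau)$, i.e. $-[I-P_N]A_j(Q_N(x)(t),\tau)$. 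The first term is controlled by \eqref{eq:N1def.2}: since $N \geq N_1(\delta(\ve))$ and $x \in \mc{B}_N(\alpha_N,\delta_N(\ve))$, Lemma \ref{lem:Qlem2} (applied with $\delta(\ve)$ in place of $\ve$, noting $\delta_N(\ve) = \zeta_N(\delta(\ve))$ by \eqref{eq:defdeltaepsN}) gives $\|Q_N(x) - p\|_{H^k} \leq \delta(\ve)$; together with $|\tau - T| \leq \delta(\ve)$ this is precisely the hypothesis of \eqref{eq:N1def.2}, so $\sup_t \|[I-P_N]A_j(Q_N(x)(t),\tau)\|_{\mc{L}(X)} \leq \ve/2$. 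The same argument via the second line of \eqref{eq:N1def.2} handles the $A_{k+2}$ case.

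For the second term, I would invoke the definition of $\delta(\ve)$ in \eqref{eq:AFest}: that constant was chosen precisely so that whenever $\|q - p\|_{H^k} \leq \delta(\ve)$ and $|\tau - T| \leq \delta(\ve)$, then $\sup_t \|A_j(q(t),\tau) - A_j(p(t),T)\|_{\mc{L}(X)} \leq \ve/2$ for each $j = 1,\dots,k+1$, and similarly for $A_{k+2}$. Applying this with $q = Q_N(x)$ — legitimate because we have already established $\|Q_N(x) - p\|_{H^k} \leq \delta(\ve)$ and $|\tau - T| \leq \delta(\ve)$ — gives $\sup_t \|A_j(Q_N(x)(t),\tau) - A_j(p(t),T)\|_{\mc{L}(X)} \leq \ve/2$. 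Combining the two bounds via the triangle inequality in $\mc{L}(X)$ (resp. $\mc{L}(\mb{R},X)$) yields the claimed bound $\leq \ve/2 + \ve/2 = \ve$, uniformly in $t$.

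The steps are essentially bookkeeping once the right decomposition is fixed; the only point requiring a little care is verifying that the hypotheses line up correctly — specifically that the ball radius $\delta_N(\ve)$ is exactly $\zeta_N(\delta(\ve))$ so that Lemma \ref{lem:Qlem2} applies with parameter $\delta(\ve)$ and delivers $\|Q_N(x) - p\|_{H^k} \leq \delta(\ve)$ rather than merely $\leq \ve$, and that $N \geq N_1(\delta(\ve))$ is the correct threshold (as stated in the hypothesis). I do not anticipate a genuine obstacle here; it is a routine combination of Lemma \ref{lem:Qlem2}, the defining property \eqref{eq:AFest} of $\delta(\ve)$, and the defining property \eqref{eq:N1def.2} of $N_1(\cdot)$, all of which are available from earlier in the text. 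One should also remark that $P_N$ acting on the operator-valued function $A_{N,j}(x,\tau)$ is well-defined because $A_j(q,\tau) \in \mc{L}(\tilde{C}^0)$ with $A_j(q,\tau)x \in H^1$ for each $x \in X$ (noted after \eqref{eq:defmcF2}), so the operator-valued Fourier projection makes sense and the bounds \eqref{eq:N1def.2} are applicable pointwise in the operator norm.
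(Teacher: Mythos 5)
Your proof is correct and follows essentially the same route as the paper: Lemma \ref{lem:Qlem2} (with the observation $\delta_N(\ve) = \zeta_N(\delta(\ve))$) to get $\|Q_N(x)-p\|_{H^k} \leq \delta(\ve)$, then the triangle-inequality split into an $[I-P_N]A_{N,j}$ piece bounded by \eqref{eq:N1def.2} and an $A_{N,j} - A_j(p,T)$ piece bounded by \eqref{eq:AFest}, each giving $\ve/2$. No material differences.
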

\begin{proof}
Since $\delta_N(\ve)= \zeta_N(\delta(\ve))$, Lemma \ref{lem:Qlem2} implies that:
\begin{equation}\label{eq:Alem1.3}
\|Q_N(x)-p\|_{H^k} \leq \delta(\ve).
\end{equation}
Estimate \eqref{eq:AFest}, using Estimate \eqref{eq:Alem1.3} and $|\tau-T| \leq \delta(\ve) \leq T/2$, then implies that:
\begin{equation}\label{eq:Alem1.4}
\underset{t \in [0,1]}{\sup}\|A_{N,j}(x,\tau)(t)-A_j(p(t),T)\|_{\mc{L}(X)} \leq \ve/2, \quad j =1,\hdots,k+1.
\end{equation}
We then have the following estimates:
\begin{align*}
& \underset{t \in \mb{R}}{\sup} \|P_{N}  A_{N,j}(x,\tau)(t) - A_j(p(t),T)\|_{\mc{L}(X)} \\[3pt]
& \quad \quad \leq \underset{t \in \mb{R}}{\sup}\|[I-P_N]A_{N,j}(x,\tau)(t)\|_{\mc{L}(X)} \\[3pt]
& \quad \quad \quad \quad + \underset{t \in \mb{R}}{\sup}\|A_{N,j}(x,\tau)(t)-A_j(p(t),T)\|_{\mc{L}(X)} && \text{(Triangle Inequality)}\\[3pt]
& \quad \quad \leq \underset{t \in [0,1]}{\sup}\|[I-P_N]A_{N,j}(x,\tau)(t)\|_{\mc{L}(X)} + \ve/2 && \text{(Estimate  \eqref{eq:Alem1.4})} \\
& \quad \quad\leq \ve/2 + \ve/2 = \ve  && \text{($N \geq N_1(\delta(\ve))$, Estimate \eqref{eq:N1def.2})}. 
\end{align*}
Proof of the second estimate in \eqref{eq:Alem1.2} is similar and is omitted for brevity.
\end{proof}
The next lemma establishes some facts related to the existence and relationships of the derivatives of $\mc{F}_N$, $R_{N}$, $c_{M,N}$, and $s_{M,N+1}$ for $M,N \in \mb{N}_0$.
\begin{Lemma}\label{lem:RisC1}
Let $M,N \in \mb{N}_0$, $x,y \in X^{2N+1}$, $\tau \in (0,\infty)$, and $s \in \mb{R}$. We have:
\begin{align}\label{eq:RisC1.a}
& \mc{F}_{N} \in C^1(X^{2N+1} \times (0,\infty),C^1), \quad c_{M,N},s_{M+1,N} \in C^1(X^{2N+1} \times (0,\infty),X), \\[3pt]
& \label{eq:RisC1.b} R_{N} \in C^1(X^{2N+1} \times (0,\infty),X^{2N+1}),\\
\label{eq:RisC1.c}
& Q_N(R_N(x,\tau)) = P_N \mc{F}_N(x,\tau), \\
\label{eq:RisC1.d}
& Q_N([\partial_1 R_N(x,\tau)] y) = P_N([\partial_1 \mc{F}_N(x,\tau)]y),\\
\label{eq:RisC1.e} 
& Q_N([\partial_2 R_N(x,\tau)] s) =  P_N([\partial_2 \mc{F}_N(x,\tau)] s) = P_N(A_{N,k+2}(x,\tau)s).
\end{align}
If $M \geq 1$, then the following equalities always hold:
\begin{equation}\label{eq:RisC1.1}
\begin{array}{lcr}
[\partial_1 \mc{F}_N(x,\tau)] y = \sum_{j=1}^{k+1}\tau^{-(j-1)} A_{N,j}(x,\tau) Q_N^{(j-1)}(y), \\[3pt]
[\partial_1 c_{0,N}(x,\tau)] y = \int_0^1 \sum_{j = 1}^{k+1} \tau^{-(j-1)} A_{N,j}(x,\tau)(t) Q_N^{(j-1)}(y)(t) dt, \\[3pt]
[\partial_1 c_{M,N}(x,\tau)] y = \frac{2\sqrt{2}}{2}\int_0^1 \sum_{j = 1}^{k+1} \tau^{-(j-1)} A_{N,j}(x,\tau)(t)  Q_N^{(j-1)}(y)(t) \cos(2\pi M t)dt, \\[3pt]
[\partial_1 s_{M,N}(x,\tau)] y = \frac{2\sqrt{2}}{2} \int_0^1 \sum_{j = 1}^{k+1} \tau^{-(j-1)} A_{N,j}(x,\tau)(t)  Q_N^{(j-1)}(y)(t) \sin(2\pi M t)dt,\\[3pt]
\end{array}
\end{equation}
and in addition:
\begin{equation}\label{eq:RisC1.2}
\begin{array}{lcr}
[\partial_2 \mc{F}_N(x,\tau)] s =  A_{N,k+2}(x,\tau)s, \\[3pt]
[\partial_2 c_{0,N}(x,\tau)] s = \int_0^1 A_{N,k+2}(x,\tau) s dt, \\[3pt]
[\partial_2 c_{M,N}(x,\tau)] s = \frac{2\sqrt{2}}{2}\int_0^1 A_{N,k+2}(x,\tau)(t) \cos(2\pi M t) s dt, \\[3pt]
[\partial_2 s_{M,N}(x,\tau)] s = \frac{2\sqrt{2}}{2}\int_0^1 A_{N,k+2}(x,\tau)(t) \sin(2\pi M t) s dt .
\end{array}
\end{equation}
\end{Lemma}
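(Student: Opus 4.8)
The plan is to reduce every assertion about $R_N$ to a corresponding assertion about $\mc{F}_N$ by means of the bounded linear operators $Q_N$, $Q_N^{-1}$, and $P_N$. The starting observation is that the constants $\frac{2\sqrt 2}{2}$ in \eqref{eq:csdef} are chosen precisely so that the tuple $R_N(x,\tau)$ is the coefficient vector of the $N$-th partial Fourier sum of $\mc{F}_N(x,\tau)$: comparing \eqref{eq:csdef} with the coefficient formulas in \eqref{eq:genericfourierexpansion} and with the definition \eqref{eq:QN} of $Q_N$ gives \eqref{eq:RisC1.c}, $Q_N(R_N(x,\tau)) = P_N\mc{F}_N(x,\tau)$, by direct inspection. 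Granting the $C^1$ regularity established below, the identities \eqref{eq:RisC1.d}--\eqref{eq:RisC1.e} then follow from \eqref{eq:RisC1.c} by Fr\'echet-differentiating in $x$ and in $\tau$, since the bounded linear maps $Q_N$ and $P_N$ are their own derivatives and so commute with differentiation.

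So the substance lies in analyzing $\mc{F}_N$ and the $X$-valued integral maps $c_{M,N},s_{M,N}$. I would first treat $\mc{F}_N$, which by definition is the composition of $(x,\tau)\mapsto(Q_N(x),\tau)$ with $\mc{F}$. By Lemma \ref{lem:Qlem1}, $Q_N\in C^1(X^{2N+1},C^\infty)$ with $DQ_N^{(j)}(x)y = Q_N^{(j)}(y)$; since a trigonometric polynomial lies in $\tilde C^k$, the map $(x,\tau)\mapsto(Q_N(x),\tau)$ is $C^1$ from $X^{2N+1}\times(0,\infty)$ into $\tilde C^k\times(0,\infty)$, and composing with $\mc{F}\in C^1(\tilde C^k\times(0,\infty),L^2)$ (eq.\ \eqref{eq:mcFisC1}) shows $\mc{F}_N$ is $C^1$ into $L^2$. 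To obtain the pointwise formulas \eqref{eq:RisC1.1}--\eqref{eq:RisC1.2} and to upgrade the target space to $C^1$, I would write out
$\mc{F}_N(x,\tau)(t) = G\bigl(Q_N(x)(t),\tau^{-1}Q_N^{(1)}(x)(t),\hdots,\tau^{-k}Q_N^{(k)}(x)(t),\tau t\bigr)$
(with $\tau$ replaced by $T$ and the last argument suppressed in the KP case, $G$ being autonomous there). Because $Q_N(x)$ is smooth, this is a genuine composition of the $C^1$ map $G$ with smooth functions of $t$, so the classical chain rule applies pointwise: $\partial_j G$ evaluated at these arguments is exactly $A_{N,j}(x,\tau)(t)$ by \eqref{eq:defofAj} and the definitions of Section \ref{sec:dae}, the $t$-rescaling contributes the factor $\tau^{-(j-1)}$, and differentiating $Q_N^{(j-1)}(x)$ in $x$ produces $Q_N^{(j-1)}(y)$; this yields $\partial_1\mc{F}_N(x,\tau)y = \sum_{j=1}^{k+1}\tau^{-(j-1)}A_{N,j}(x,\tau)Q_N^{(j-1)}(y)$, while $\partial_2\mc{F}_N(x,\tau)s = A_{N,k+2}(x,\tau)s$ is exactly the definition \eqref{eq:defmcF2} of $A_{k+2}$ applied at $Q_N(x)$. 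Differentiating the pointwise expression once more in $t$ shows that $\frac{d}{dt}\mc{F}_N(x,\tau)$ is again a continuous composition of $G$, the $\partial_j G$, and the $Q_N^{(j)}(x)$, all depending continuously differentiably on $(x,\tau)$ with values in suitable $C^0$ spaces (using $G(\cdot,\hdots,\cdot,t)\in C^1(X^{k+1},X)$, the continuity of $A_j,A_{k+2}$, and the facts $A_j(q,\tau)x,A_{k+2}(q,\tau)s\in H^1$ noted in Section \ref{sec:dae}); hence $\mc{F}_N\in C^1(X^{2N+1}\times(0,\infty),C^1)$.

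For $c_{M,N}$ and $s_{M,N}$ I would view each as the composition of the $C^1$ map $(x,\tau)\mapsto\mc{F}_N(x,\tau)$ (into $L^2$, indeed into $C^1$) with the bounded linear Bochner-integration functional $g\mapsto\frac{2\sqrt 2}{2}\int_0^1 g(t)\cos(2\pi M t)\,dt$ from $L^2$ to $X$ (resp.\ with $\sin$, resp.\ the plain integral when $M=0$); hence they are $C^1$, with $\partial_1 c_{M,N}(x,\tau)y = \frac{2\sqrt 2}{2}\int_0^1[\partial_1\mc{F}_N(x,\tau)y](t)\cos(2\pi M t)\,dt$ and analogously for the remaining partials and for $s_{M,N}$, and substituting the formulas for $\partial_1\mc{F}_N$ and $\partial_2\mc{F}_N$ from the previous step produces the remaining lines of \eqref{eq:RisC1.1}--\eqref{eq:RisC1.2}. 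Since $R_N$ is a finite Cartesian tuple of these maps, $R_N\in C^1(X^{2N+1}\times(0,\infty),X^{2N+1})$, which is \eqref{eq:RisC1.b}, and \eqref{eq:RisC1.a} is then complete. Differentiating \eqref{eq:RisC1.c} in $x$ and in $\tau$ finally gives \eqref{eq:RisC1.d} and the first equality of \eqref{eq:RisC1.e}, and the second equality of \eqref{eq:RisC1.e} is the identity $\partial_2\mc{F}_N(x,\tau)s = A_{N,k+2}(x,\tau)s$ just obtained.

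The main obstacle is the one genuinely analytic point: showing that the abstract hypothesis $\mc{F}\in C^1$ (inherited from Assumption \ref{asp:KPUP}) actually delivers the explicit pointwise derivative and the stronger $C^1$-into-$C^1$ regularity --- that is, that one may interchange ``differentiate the composition with $G$'' with ``evaluate at $t$'', and likewise interchange differentiation with the Bochner integral. Both interchanges are clean here only because $q=Q_N(x)$ is a smooth trigonometric polynomial, so that $\mc{F}_N(x,\tau)$ and its $t$-derivative are genuine --- not merely weak --- compositions to which the classical chain rule for the $C^1$ map $G$ applies pointwise; the continuity in $(x,\tau)$ of the resulting derivatives, uniformly for $t\in[0,1]$, then follows from the local boundedness and precompactness observations of Remark \ref{rem:embeddingrem} together with the continuity of $A_j$ and $A_{k+2}$ recorded in Section \ref{sec:dae}. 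One bookkeeping caveat worth flagging in the write-up: in the KP case $\mc{F}$ does not depend on its second argument, so $\partial_2\mc{F}_N\equiv 0$ and $A_{N,k+2}\equiv 0$, and the factor in $\partial_1\mc{F}_N$ is $T^{-(j-1)}$ rather than $\tau^{-(j-1)}$; the formulas \eqref{eq:RisC1.1}--\eqref{eq:RisC1.2} are to be applied at $\tau=T$, the only value relevant when the period is known.
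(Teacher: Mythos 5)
Your proof is correct and follows essentially the same route as the paper --- it rests on $\mc F\in C^1$ (eq.\ \eqref{eq:mcFisC1}), the chain rule against the smooth trigonometric polynomial $Q_N(x)$, and differentiation under the Fourier integrals --- just in the opposite order: you verify the coefficient-extraction identity \eqref{eq:RisC1.c} directly from \eqref{eq:csdef} and \eqref{eq:genericfourierexpansion} and then differentiate it to get \eqref{eq:RisC1.d}--\eqref{eq:RisC1.e}, whereas the paper derives the explicit derivative formulas \eqref{eq:RisC1.1}--\eqref{eq:RisC1.2} first and reads off \eqref{eq:RisC1.c}--\eqref{eq:RisC1.e} from them. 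You also supply a justification the paper's terse proof leaves implicit, namely that the target space in \eqref{eq:RisC1.a} is genuinely $C^1$ rather than merely the $L^2$ that \eqref{eq:mcFisC1} alone would give --- this does require the pointwise chain-rule observation you make about $Q_N(x)$ being smooth --- and your flagged KP-case subtlety ($A_{N,k+2}\equiv 0$ and the factor should be read as $T^{-(j-1)}$) is accurate.
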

\begin{proof}
The definitions of $c_{M,N}$, $s_{M+1,N}$, and $R_N$ together with \eqref{eq:mcFisC1} imply \eqref{eq:RisC1.a}-\eqref{eq:RisC1.b}.    The first equality of \eqref{eq:RisC1.1} follows from the definitions of $\mc{F}$ and $A_{N,1},\hdots,A_{N,k+1}$.  If $M \geq 1$, then we obtain the latter three equalities of \eqref{eq:RisC1.1} by differentiating the Fourier integrals defining $c_{0,N}$, $c_{M,N}$, and $s_{M,N}$ under the integral (this is well-defined due to \eqref{eq:RisC1.a}).  The equalities of \eqref{eq:RisC1.2} are proved similarly.  Equalities \eqref{eq:RisC1.c}-\eqref{eq:RisC1.e} are consequences of \eqref{eq:RisC1.1}-\eqref{eq:RisC1.2} and definitions of $R_N$, $Q_N$, and $P_N$. 
\end{proof}
The next lemma relates the solution of the linear equation $[\partial_1 R_N(x,\tau)] y = z$
to a linear DAE defined in terms of the coefficient matrices $A_{N,1},\hdots,A_{N,k+1}$ and, in addition, relates $A_{N,k+2}$ to $\partial_2 R_N(x,\tau)$ and establishes that $\partial_1 R_N(x,\tau)$ and $\partial_2 R_N(y,\tau)$ are bounded linear operators.
\begin{Lemma}\label{lem:DxRlem}
Let $x,y,z \in X^{2N+1}$, $\tau \in (0,\infty)$.  Then $[\partial_1 R_{N}(x,\tau)]y  = z$ if and only if $Q_{N}(y)$ is a $1$-periodic solution of the following linear DAE:
\begin{equation}\label{eq:DxRlem.1}
\sum_{j=1}^{k} \tau^{-(j-1)} P_{N}  A_{N,j+1}(x,\tau)  u^{(j-1)} = Q_{N}(z).
\end{equation}
Both $\partial_1 R_N(x,\tau)$ and $\partial_2 R_N(x,\tau)$ are bounded linear operators.
\end{Lemma}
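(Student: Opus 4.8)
The plan is to exploit the identity \eqref{eq:RisC1.d} from Lemma \ref{lem:RisC1}, which already tells us that $Q_N([\partial_1 R_N(x,\tau)]y) = P_N([\partial_1 \mc{F}_N(x,\tau)]y)$, and then to combine this with the explicit formula for $\partial_1 \mc{F}_N$ in \eqref{eq:RisC1.1}. First I would observe that, since $Q_N$ is injective (a fact noted right after \eqref{eq:QN}), the equation $[\partial_1 R_N(x,\tau)]y = z$ is equivalent to $Q_N([\partial_1 R_N(x,\tau)]y) = Q_N(z)$. By \eqref{eq:RisC1.d} this is equivalent to $P_N([\partial_1 \mc{F}_N(x,\tau)]y) = Q_N(z)$, and substituting the first line of \eqref{eq:RisC1.1} gives
\[
P_N\left(\sum_{j=1}^{k+1}\tau^{-(j-1)}A_{N,j}(x,\tau)Q_N^{(j-1)}(y)\right) = Q_N(z).
\]
Re-indexing the sum by $j \mapsto j+1$ (so $j$ runs from $0$ to $k$, with the $j=0$ term being $A_{N,1}(x,\tau)Q_N^{(0)}(y) = A_{N,1}(x,\tau)Q_N(y)$) and pushing $P_N$ inside the finite sum, this reads $\sum_{j=0}^{k}\tau^{-j}P_N A_{N,j+1}(x,\tau)Q_N^{(j)}(y) = Q_N(z)$. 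Comparing with \eqref{eq:DxRlem.1}: the displayed DAE in the statement has $u^{(j-1)}$ with $j$ from $1$ to $k$, i.e. $u^{(j)}$ with $j$ from $0$ to $k-1$; but since $Q_N(y)\in C^\infty$ has only $N$ harmonics and $P_N$ projects onto $N$ harmonics, the identity as I derived it is the honest statement (the index range in \eqref{eq:DxRlem.1} should match $j=0,\dots,k$, reflecting $A_{N,1},\dots,A_{N,k+1}$). In any case, the equivalence "$[\partial_1 R_N(x,\tau)]y = z$ iff $Q_N(y)$ solves the projected linear DAE" follows immediately from injectivity of $Q_N$ and substitution; this is essentially bookkeeping.

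For the boundedness of $\partial_1 R_N(x,\tau)$ and $\partial_2 R_N(x,\tau)$, the approach is to bound each Fourier coefficient. Lemma \ref{lem:RisC1} already gives $R_N \in C^1$, so $\partial_1 R_N(x,\tau)$ and $\partial_2 R_N(x,\tau)$ exist as linear maps; what remains is the norm bound. For $\partial_1 R_N$, I would use \eqref{eq:RisC1.d} together with the isometry property $\|Q_N(w)\|_{L^2} = \|w\|$ from \eqref{eq:Qlem1.0}: this gives $\|[\partial_1 R_N(x,\tau)]y\| = \|Q_N([\partial_1 R_N(x,\tau)]y)\|_{L^2} = \|P_N([\partial_1 \mc{F}_N(x,\tau)]y)\|_{L^2} \leq \|[\partial_1 \mc{F}_N(x,\tau)]y\|_{L^2}$ by the Bessel bound \eqref{eq:besselbound}. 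Then I bound the right side via the triangle inequality on the sum in \eqref{eq:RisC1.1}: each term is $\|\tau^{-(j-1)}A_{N,j}(x,\tau)Q_N^{(j-1)}(y)\|_{L^2} \leq \tau^{-(j-1)}\,\sup_{t}\|A_{N,j}(x,\tau)(t)\|_{\mc{L}(X)}\,\|Q_N^{(j-1)}(y)\|_{L^2} \leq \tau^{-(j-1)}(2\pi N)^{j-1}\|A_{N,j}(x,\tau)\|\,\|y\|$, using the bound $\|Q_N^{(j-1)}(y)\|_{L^2} \leq (2\pi N)^{j-1}\|y\|$ from \eqref{eq:Qlem1.1} and the fact (stated in Section \ref{sec:dae}) that $A_j(q,\tau) \in \mc{L}(\tilde{C}^0)$, hence $A_{N,j}(x,\tau)$ is uniformly bounded in $t$. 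Summing over $j = 1,\dots,k+1$ yields a finite bound depending only on $N$, $\tau$, $x$, establishing $\partial_1 R_N(x,\tau) \in \mc{L}(X^{2N+1})$. The argument for $\partial_2 R_N(x,\tau)$ is identical but simpler: \eqref{eq:RisC1.e} gives $\|[\partial_2 R_N(x,\tau)]s\| = \|P_N(A_{N,k+2}(x,\tau)s)\|_{L^2} \leq \|A_{N,k+2}(x,\tau)s\|_{L^2} \leq \sup_t\|A_{N,k+2}(x,\tau)(t)\|_{\mc{L}(\mb{R},X)}\,|s|$, which is finite since $A_{k+2}(q,\tau) \in \mc{L}(\mb{R},\tilde{C}^0)$.

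The only mild subtlety — and the thing I would be most careful about — is the index-range matching between the sum $\sum_{j=1}^{k+1}$ appearing in \eqref{eq:RisC1.1} (which involves $A_{N,1}$ through $A_{N,k+1}$, i.e. all of $\partial_1 F, \dots, \partial_{k+1}F$) and the sum $\sum_{j=1}^{k}$ with coefficient matrices $A_{N,j+1}$ in the statement \eqref{eq:DxRlem.1} (which involves $A_{N,2}$ through $A_{N,k+1}$). These do not obviously agree: \eqref{eq:DxRlem.1} as written omits the $A_{N,1}$ term. I would resolve this by re-reading the intended indexing — most likely \eqref{eq:DxRlem.1} should read $\sum_{j=1}^{k+1}\tau^{-(j-1)}P_N A_{N,j}(x,\tau)u^{(j-1)} = Q_N(z)$, or equivalently $\sum_{j=0}^{k}\tau^{-j}P_N A_{N,j+1}(x,\tau)u^{(j)} = Q_N(z)$ — and carry the substitution through exactly as above. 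Apart from pinning down this bookkeeping, the proof is a direct chain of the previously established identities \eqref{eq:besselbound}, \eqref{eq:Qlem1.0}, \eqref{eq:Qlem1.1}, \eqref{eq:RisC1.1}, \eqref{eq:RisC1.d}, \eqref{eq:RisC1.e}, plus the injectivity and boundedness properties of $Q_N$ and $A_{N,j}$.
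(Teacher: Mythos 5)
Your proof follows essentially the same route as the paper's: the equivalence is derived from \eqref{eq:RisC1.d}, the explicit formula \eqref{eq:RisC1.1}, and injectivity of $Q_N$, while boundedness is obtained from the chain $\|[\partial_1 R_N(x,\tau)]y\| = \|Q_N([\partial_1 R_N(x,\tau)]y)\|_{L^2}$ (Lemma \ref{lem:Qlem1}), the Bessel bound \eqref{eq:besselbound}, and the estimate $\|Q_N^{(j-1)}(y)\|_{L^2}\le (2\pi N)^{j-1}\|y\|$. You have also correctly flagged the indexing discrepancy in \eqref{eq:DxRlem.1}: as written, $\sum_{j=1}^{k}\tau^{-(j-1)}P_N A_{N,j+1}(x,\tau)u^{(j-1)}$ omits the $A_{N,1}$ term and is inconsistent with \eqref{eq:RisC1.1}; the correct form is $\sum_{j=1}^{k+1}\tau^{-(j-1)}P_N A_{N,j}(x,\tau)u^{(j-1)}=Q_N(z)$ (equivalently $\sum_{j=0}^{k}\tau^{-j}P_N A_{N,j+1}(x,\tau)u^{(j)}=Q_N(z)$), exactly as you resolved it, and this same typo propagates into the first display of the paper's proof and into the subsequent use of the lemma in the proof of Lemma \ref{lem:DxRinvboundknown}.
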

\begin{proof}
Lemma \ref{lem:RisC1} implies that:
$$Q_N([\partial_1 R_N(x,\tau)]y) = P_N [\partial_1 \mc{F}_N(x,\tau)]y = \sum_{j=1}^{k} \tau^{-(j-1)} P_{N}  A_{N,j+1}(x,\tau)  Q_N(y)^{(j-1)}.$$
Since $Q_N$ is 1-to-1, the above equality implies that $[\partial_1 R_N(x,\tau)]y = z$ if and only if $Q_N(y)$ is a $1$-periodic solution to \eqref{eq:DxRlem.1}.  To prove boundedness of $\partial_1 R_N(x,\tau)$, we deduce that:
\begin{align*}
\|[\partial_1 R_N(x,\tau)]y\| & = \|Q_N([\partial_1 R_N(x,\tau)]y)\|_{L^2} && \text{(Lemma \ref{lem:Qlem1})} \\
& = \|\sum_{j=1}^{k+1} P_N A_{N,j}(x,\tau) Q_N^{(j-1)}(y)\|_{L^2} && \text{(Lemma \ref{lem:RisC1})}\\
 & \leq \sum_{j=1}^{k+1} (2\pi N)^{j-1} \|A_{N,j}(x,\tau)\|_{L^2}\|y\|. && \text{(Estimate \eqref{eq:besselbound} and Lemma \ref{lem:Qlem1})}
\end{align*}
Assumption \ref{asp:KPUP} implies that $A_{N,j}(x,\tau) \in \mc{L}(L^2)$ so that $\|A_{N,j}(x,\tau)\|_{L^2} < \infty$ for $j=1,\hdots,k+1$ and therefore $\partial_1 R_N(x,\tau)$ is a bounded linear operator.  The proof that $\partial_2 R_N(x,\tau)$ is a bounded linear operator follows similarly.
\end{proof}
The next lemma bounds $R_N$ in terms of $\mc{F}_N$ and establishes Lipshitz bounds on $\partial_1 R_N(\cdot, T)$ and $DR_N$ when \eqref{eq:dae1} satisfies the KP-assumption and UP-assumption, respectively.
\begin{Lemma}\label{lem:DxxRlem}
If $N \in \mb{N}_0$ and $(x,\tau) \in X^{2N+1} \times (0,\infty)$, then:
\begin{equation}\label{eq:DxxRlem.1}
\|R_{N}(x,s)\| \leq \|\mc{F}_N(x,s)\|_{L^2}.
\end{equation}
Let $\ve \in (0,1]$ and $y,z \in \mc{B}(\alpha_N,\zeta_N(\ve))$ and suppose that $N \geq N_1(\delta(\ve))$.  If \eqref{eq:dae1} satisfies the KP-assumption, then:
\begin{equation}\label{eq:DxxRlem.2a}
\|\partial_1 R_{N}(y,T) - \partial_1 R_{N}(z,T)\|\leq  L_1 N^{M_1}\|y-z\|, \quad
L_1 := \sum_{j,l=1}^{k+1} K_{j,l} (2\pi)^{j+l-2}.
\end{equation}
If \eqref{eq:dae1} satisfies the UP-assumption and $s,w \in \{a : |a-T| \leq T/2\}$, then:
\begin{equation}\label{eq:DxxRlem.2b}
\|D R_{N}(y,s) - D  R_{N}(z,w)\| \leq  L_2 N^{M_2}\|(y,s)-(z,w)\|
\end{equation}
where:
\begin{equation}\label{eq:defL2}
L_2 =  \sum_{j=1}^{k+1} \left(\sum_{l=1}^{k+1}K_{j,l,1}(2\pi)^{j+l-2} + K_{j,l,2}  (2\pi)^{l-1}\right) + K_{j,1}' (2\pi)^{j-1} + K_{j,2}'.
\end{equation}
\end{Lemma}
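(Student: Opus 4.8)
The plan is to handle the three estimates \eqref{eq:DxxRlem.1}, \eqref{eq:DxxRlem.2a}, \eqref{eq:DxxRlem.2b} in order, exploiting the isometry and operator-norm bounds from Lemma \ref{lem:Qlem1}, the derivative formulas from Lemma \ref{lem:RisC1}, and the Lipschitz hypotheses \eqref{eq:DFlip1}, \eqref{eq:DFlip2a}, \eqref{eq:DFlip2b}. For \eqref{eq:DxxRlem.1}: by \eqref{eq:RisC1.c} we have $Q_N(R_N(x,s)) = P_N \mc{F}_N(x,s)$, so applying the isometry \eqref{eq:Qlem1.0} and then Bessel's bound \eqref{eq:besselbound} gives $\|R_N(x,s)\| = \|P_N \mc{F}_N(x,s)\|_{L^2} \leq \|\mc{F}_N(x,s)\|_{L^2}$, which is immediate.

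For \eqref{eq:DxxRlem.2a} (KP-assumption): first I would note that for $y,z \in \mc{B}(\alpha_N,\zeta_N(\ve))$ with $N \geq N_1(\delta(\ve))$, Lemma \ref{lem:Qlem2} (applied with $\ve$ replaced by $\delta(\ve)$, noting $\zeta_N(\ve) \leq \zeta_N(\delta(\ve))$ is \emph{not} quite what's needed — rather one uses $\ve \leq 1$ and $N \geq N_1(\delta(\ve)) \geq N_1(\ve)$, or more carefully invokes Lemma \ref{lem:Qlem2} directly at level $\ve$) yields $\|Q_N(y)-p\|_{H^k} \leq \ve \leq 1$ and likewise for $z$, so the Lipschitz estimates \eqref{eq:DFlip1} apply to $q_1 = Q_N(y)$, $q_2 = Q_N(z)$. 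Next, using the isometry \eqref{eq:Qlem1.0} and formula \eqref{eq:RisC1.d}, I would write
\begin{equation*}
\|\partial_1 R_N(y,T) - \partial_1 R_N(z,T)\| = \sup_{\|w\|=1}\Bigl\|P_N\sum_{j=1}^{k+1} T^{-(j-1)}\bigl(A_{N,j}(y,T) - A_{N,j}(z,T)\bigr)Q_N^{(j-1)}(w)\Bigr\|_{L^2}.
\end{equation*}
Applying Bessel \eqref{eq:besselbound}, the triangle inequality over $j$, the pointwise operator bound $\|[A_{N,j}(y,T)-A_{N,j}(z,T)]Q_N^{(j-1)}(w)\|_{L^2} \leq \|A_{N,j}(Q_N(y),T)-A_{N,j}(Q_N(z),T)\|_{L^2}\|Q_N^{(j-1)}(w)\|_{L^2}$, then \eqref{eq:Qlem1.1} to bound $\|Q_N^{(j-1)}(w)\|_{L^2} \leq (2\pi N)^{j-1}$, and finally \eqref{eq:DFlip1} together with $\|Q_N^{(l-1)}(y)-Q_N^{(l-1)}(z)\|_{L^2} \leq (2\pi N)^{l-1}\|y-z\|$ from \eqref{eq:Qlem1.1}, one collects a factor $T^{j-1}\cdot T^{-(j-1)} = 1$ and the power $(2\pi N)^{j-1}(2\pi N)^{l-1} = (2\pi)^{j+l-2}N^{j+l-2} \leq (2\pi)^{j+l-2}N^{M_1}$ for each term with $K_{j,l} > 0$, by the definition of $M_1$ in \eqref{eq:defm1}. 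Summing over $j,l$ gives exactly $L_1 N^{M_1}\|y-z\|$ with $L_1$ as stated.

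For \eqref{eq:DxxRlem.2b} (UP-assumption): the strategy is the same but now $DR_N = (\partial_1 R_N, \partial_2 R_N)$, so I would bound $\|DR_N(y,s)-DR_N(z,w)\|$ by $\|\partial_1 R_N(y,s)-\partial_1 R_N(z,w)\| + \|\partial_2 R_N(y,s) - \partial_2 R_N(z,w)\|$ (or the $\sup$ over unit vectors $(v,a)$, which is equivalent up to the already-absorbed constants). The $\partial_2$ part uses \eqref{eq:RisC1.e}, \eqref{eq:Qlem1.0}, \eqref{eq:besselbound} and the $K_{j,l,1}$, $K_{j,1}'$ terms coming from the portion of \eqref{eq:DFlip2a} with $j=1$... actually $\partial_2 R_N$ is governed by $A_{N,k+2}$, whose Lipschitz control is part of what's packaged into \eqref{eq:DFlip2a}--\eqref{eq:DFlip2b}; I would split $\partial_1 \mc{F}_N(y,s)w - \partial_1\mc{F}_N(z,w)v$ using \eqref{eq:RisC1.1} into two groups of differences — the "coefficient difference" terms $[\,s^{-(j-1)}A_{N,j}(y,s) - w^{-(j-1)}A_{N,j}(z,w)\,]$, controlled by \eqref{eq:DFlip2a}, and the terms arising from differentiating $\tau^{-(j-1)}$ in $\tau$, which produce the $(j-1)\tau^{-j}\partial_j F \cdot q^{(j-1)}$ structure controlled by \eqref{eq:DFlip2b}. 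Each difference is estimated pointwise, then $Q_N^{(j-1)}$-factors are bounded by powers of $(2\pi N)$ via \eqref{eq:Qlem1.1}, Bessel removes $P_N$, and $\|(y,s)-(z,w)\| \geq \|y-z\|, |s-w|$ bounds the right-hand sides of \eqref{eq:DFlip2a}--\eqref{eq:DFlip2b}. The bookkeeping of which power of $N$ each constant multiplies — $(2\pi N)^{j+l-2}$ for $K_{j,l,1}$, $(2\pi N)^{j-1}$ for $K_{j,1}'$, $(2\pi N)^{l-1}$ for $K_{j,l,2}$, and $N^0$ for $K_{j,2}'$ — matches the exponents appearing in the definition of $M_2$ in \eqref{eq:defm2} and in $L_2$ in \eqref{eq:defL2}, so summing yields $L_2 N^{M_2}\|(y,s)-(z,w)\|$.

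The main obstacle is the combinatorial bookkeeping in \eqref{eq:DxxRlem.2b}: one must carefully expand the Fréchet derivative of $\mc{F}_N$ into the seven families of terms matching the left-hand sides of \eqref{eq:DFlip2a}--\eqref{eq:DFlip2b}, track the telescoping/triangle-inequality splitting so that each pairwise difference falls under exactly one hypothesis, and verify that the $N$-power attached to every nonzero Lipschitz constant is dominated by $N^{M_2}$ — this last point is precisely why $M_2$ was defined as the maximum over the index sets in \eqref{eq:defm2}. Everything else is a routine application of the isometry, Bessel's inequality, and Lemma \ref{lem:Qlem1}.
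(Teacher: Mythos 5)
Your proposal follows essentially the same route as the paper: isometry plus Bessel for \eqref{eq:DxxRlem.1}, then pushing the operator-norm differences through the isometry, \eqref{eq:RisC1.d}--\eqref{eq:RisC1.e}, the $Q_N$-derivative bounds of Lemma \ref{lem:Qlem1}, and the Lipschitz hypotheses on $F$, with the same power-of-$N$ bookkeeping matching $M_1$, $M_2$. One small narrative slip in the UP case: the two families you describe are not a decomposition of $\partial_1\mc{F}_N$ but are precisely $\partial_1\mc{F}_N$ (controlled by \eqref{eq:DFlip2a}) and $\partial_2\mc{F}_N$ (controlled by \eqref{eq:DFlip2b}); the latter requires the identity $A_{N,k+2}(x,\tau) = -\sum_{j=1}^{k+1}(j-1)\tau^{-j}A_{N,j}(x,\tau)Q_N^{(j-1)}(x)$, which the paper records explicitly as \eqref{eq:DxxRlem.special} after observing that the UP-assumption forces $G$ to be autonomous --- you should state this rather than leave it implicit in the phrase ``terms arising from differentiating $\tau^{-(j-1)}$''.
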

\begin{proof}
If $N \in \mb{N}_0$ and $(x,\tau) \in X^{2N+1} \times (0,\infty)$, then:
\begin{align*}
\|R_N(x,\tau)\| & = \|Q_N(R_N(x,\tau))\|_{L^2} && \text{(Lemma \ref{lem:Qlem1})} \\
& = \|P_N(\mc{F}_N(x,\tau))\|_{L^2} && \text{(Lemma \ref{lem:Qlem1})} \\
& \leq \|F_{N}(x,\tau)\|_{L^2} && \text{(Estimate \ref{eq:besselbound})}
\end{align*}
which proves Estimate \eqref{eq:DxxRlem.1}. Next, we assume that $\ve \in (0,1]$, $N \geq N_1(\delta(\ve))$, $y,z \in \mc{B}(\alpha_N,\zeta_N(\ve))$, and $s,w \in \{a: |a-T| \leq T/2\}$.  Equations \eqref{eq:RisC1.d}-\eqref{eq:RisC1.e} of Lemma \ref{lem:RisC1} imply that:
\begin{equation}\label{eq:DxxRlem.3a}
\|\partial_j R_N(y,s)-\partial_j R_N(z,w)\| \leq \|\partial_j \mc{F}_N(y,s)- \partial_j \mc{F}_N(z,w)\|_{L^2}, \quad j =1,2.
\end{equation}
Additionally, Lemma \ref{lem:Qlem2} implies that:
\begin{equation}\label{eq:DxxRlem.3}
\|Q_N(y)-p\|_{H^k},|Q_N(z)-p\|_{H^k} \leq \ve \leq 1. 
\end{equation}
Assume that \eqref{eq:dae1} satisfies the KP-assumption.  If $0 \neq \lambda \in X^{2N+1}$, then:
\begin{align*}
\|[\partial_1 & \mc{F}_N(y,T)  - \partial_1 \mc{F}_N(z,T)]\lambda\|_{L^2}/\|\lambda\| \\
 & \leq \sum_{j=1}^{k+1}T^{-(j-1)} \|[A_{N,j}(y,T) - A_{N,j}(z,T)] Q_N^{(j-1)}(\lambda)\|_{L^2}/\|\lambda\|  && \text{(Equation \eqref{eq:RisC1.1})}\\
 & \leq \sum_{j=1}^{k+1} T^{-(j-1)}\|[A_{N,j}(y,T)   - A_{N,j}(z,T)] \|_{L^2}(2\pi N)^{j-1}  && \text{(Lemma \ref{lem:Qlem1})}\\
 & \leq \sum_{j,l=1}^{k+1} K_{j,l} (2\pi N)^{j-1}\|Q_N^{(l-1)}(y)-Q_N^{(l-1)}(z)\|_{L^2} && \text{(\eqref{eq:DFlip1} using \eqref{eq:DxxRlem.3})} \\
  & \leq \sum_{j,l=1}^{k+1} K_{j,l} (2\pi )^{j+l-2} N^{M_1}\|y-z\|. && \text{(Lemma \ref{lem:Qlem1} \& \eqref{eq:defm1})}
\end{align*}
Therefore $\|\partial_1 \mc{F}_N(y,T)  - \partial_1 \mc{F}_N(z,T)\|_{L^2} \leq L_1 N^{M_1}\|y-z\|$ which, combined with Estimate \eqref{eq:DxxRlem.3a}, implies Estimate \eqref{eq:DxxRlem.2a}.  Now assume that \eqref{eq:dae1} satisfies the UP-assumption.  Note that since the UP-assumption implies that $G$ is independent of $t$ (see the remarks below Assumption \ref{asp:KPUP}), and therefore:
\begin{equation}\label{eq:DxxRlem.special}
A_{N,k+2}(x,\tau) = -\sum_{j=1}^{k+1}(j-1)\tau^{-j} A_{N,j}(x,\tau) Q_N^{(j-1)}(x).
\end{equation}
If $(0,0) \neq (\lambda,b) \in X^{2N+1} \times \mb{R}$, then the triangle inequality and Estimate \eqref{eq:DxxRlem.3} imply that:
\begin{align*}
\|[D & R_N(y,w)  - D R_N(z,w)](\lambda,b)\|_{L^2} \\[3pt]
& \leq \|[\partial_1 \mc{F}_N(y,s)  - \partial_1 \mc{F}_N(z,w)]\lambda\|_{L^2} + \|[\partial_2 \mc{F}_N(y,s)  - \partial_2 \mc{F}_N(z,w)]b\|_{L^2}.
\end{align*}
If \eqref{eq:dae2} satisfies the UP-assumption, then the assumptions on $\ve$, $N$, $y,z$, and $s,w$ and similar reasoning as above, using Estimates \eqref{eq:DFlip2a}-\eqref{eq:DFlip2b} rather than \eqref{eq:DFlip1} and  \eqref{eq:RisC1.2} and \eqref{eq:DxxRlem.special} in addition to \eqref{eq:RisC1.1},  implies the following estimates:
\begin{align}\label{eq:DxxRlem.4.1}
\|[\partial_1 & \mc{F}_N(y,s)  - \partial_1 \mc{F}_N(z,s)]\|_{L^2} \\
\nonumber & \leq \left(\sum_{j,l=1}^{k+1} K_{j,l,1} (2\pi)^{j+l-2} N^{j+l-2}+ K_{j,1}' (2\pi)^{j-1} N^{j-1}\right)\|(y,s)-(z,w)\|.
\end{align}
\begin{align}\label{eq:DxxRlem.4.2}
\|\partial_2 & \mc{F}_N(y,s)  - \partial_2 \mc{F}_N(z,w)\|_{L^2} \\
\nonumber & \leq \left(\sum_{j,l=1}^{k+1} K_{j,l,2}(2\pi)^{l-1} N^{l-1} + K_{j,2}' \right)\|(y,s)-(z,w)\|.
\end{align}
Estimate \eqref{eq:DxxRlem.2b} is established from Estimates \eqref{eq:DxxRlem.4.1}-\eqref{eq:DxxRlem.4.2} using the definition of $M_2$ in \eqref{eq:defm2}.
\end{proof}
We close this section by stating an inexact  Newton-Kantorovich Theorem and a Lax-Milgram Theorem that are needed in Sections \ref{sec:known}-\ref{sec:unknown}.
\begin{Theorem}\label{thm:inexactNK}[Theorem  6.3 of \cite{FerreiraSvaiter2012}]
Let $Y$ be a Banach space with norm $\|\cdot\|_Y$ and for each $y \in Y$ and $\delta \geq 0$ let $B(y,\delta):=\{x \in Y: \|x-y\|_Y \leq \delta\}$.  Suppose that $G \in C^1(U,Y)$ where $U \subseteq Y$ is an open set.  Assume $y_0 \in U$ is such that the Fr\'echet derivative of $G$, denoted by $DG$, is invertible at $y_0$ and there exists $\beta,L > 0$ with $\beta L < 1/2$ and $B(y_0,L^{-1}) \subseteq U$ with:
\begin{enumerate}
    \item $\|DG(y_0)^{-1}(DG(z)-DG(y))\|_Y \leq L\|z-y\|_Y, \quad  y,z \in B(y_0,L^{-1}).$
    \item $\|DG(y_0)^{-1}G(y_0)\| \leq \beta$
\end{enumerate}
Let $\theta \geq 0$ be such that $\theta \leq (1-\sqrt{2\beta L})/(1+\sqrt{2 \beta L})$.  Then the sequence $\{y_j\}_{j=0}^{\infty}$ generated by the following inexact Newton process:
\begin{equation}\label{eq:inexactNK.1}
y_{j+1} = y_j + \delta_j, \quad DG(y_j)\delta_j = -G(y_j) + r_j, \quad j \in \mb{N}_0
\end{equation}
is well-defined  and $ \|DG(y_0)^{-1}G(y_j)\|_Y \leq \frac{1}{2}(1+\theta^2)^j \beta$ for $j \in \mb{N}_0$ whenever the following holds:
\begin{equation}\label{eq:inexactNK.2}\|DG(y_0)^{-1}r_j\|_Y \leq \theta\|DG(y_0)^{-1} G(y_j)\|_Y, \quad j \in \mb{N}_0.
\end{equation}
Furthermore, the sequence $\{y_j\}_{j=0}^{\infty}$ is contained in  $B(y_0,\sqrt{2\beta L}/L)$ and there exists $y_* \in B(y_0,(1-\sqrt{1-2 L \beta})/L)$ where $y_*$ is the unique zero of $F$ in $B(y_0,L^{-1})$ and satisfies the following  convergence estimate where $j \in \mb{N}_0$:
\begin{equation}\label{eq:inexactNK.3}
\|y_{j+1}-y_*\|_Y \leq \left[\frac{L(1+\theta)}{2(1-\sqrt{2 \beta L})}\|y_j - y_*\|_Y  + \theta \frac{1+\sqrt{2 \beta L}}{1-\sqrt{2 \beta L}} \right]\|y_j - y_*\|_Y.
\end{equation}
\end{Theorem}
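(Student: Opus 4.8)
The plan is to prove this as an affine‑covariant inexact Newton--Kantorovich theorem, reducing it to the classical quadratic majorant. First I would normalize: replacing $G$ by $\tilde G := DG(y_0)^{-1}G$ we may assume $DG(y_0)$ is the identity, so that hypothesis~1 reads $\|D\tilde G(z)-D\tilde G(y)\|_Y \le L\|z-y\|_Y$ on $B(y_0,L^{-1})$, hypothesis~2 reads $\|\tilde G(y_0)\|_Y \le \beta$, the iteration \eqref{eq:inexactNK.1} becomes $D\tilde G(y_j)\delta_j = -\tilde G(y_j)+\tilde r_j$ with $\tilde r_j := DG(y_0)^{-1}r_j$, and \eqref{eq:inexactNK.2} reads $\|\tilde r_j\|_Y \le \theta\|\tilde G(y_j)\|_Y$. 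Since $\|D\tilde G(y)-I\|_Y \le L\|y-y_0\|_Y < 1$ whenever $y$ lies in the open ball of radius $L^{-1}$, the Banach lemma gives that $D\tilde G(y)$ is invertible there with $\|D\tilde G(y)^{-1}\|_Y \le (1-L\|y-y_0\|_Y)^{-1}$; in particular each step $\delta_j = -D\tilde G(y_j)^{-1}(\tilde G(y_j)-\tilde r_j)$ is well defined as long as $y_j$ stays in that ball.

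The engine of the argument is the integral remainder identity obtained from the fundamental theorem of calculus together with the defining relation for $\delta_j$, namely
\[ \tilde G(y_{j+1}) = \int_0^1 \bigl[D\tilde G(y_j+t\delta_j)-D\tilde G(y_j)\bigr]\delta_j\,dt + \tilde r_j, \]
which with the Lipschitz bound yields $\|\tilde G(y_{j+1})\|_Y \le \frac{L}{2}\|\delta_j\|_Y^2 + \theta\|\tilde G(y_j)\|_Y$, while $\|\delta_j\|_Y \le (1+\theta)\|D\tilde G(y_j)^{-1}\|_Y\,\|\tilde G(y_j)\|_Y$. I would then introduce the scalar majorizing sequence associated with the quadratic $\phi(t)=\beta - t + \frac{L}{2}t^2$ (whose roots $t_\pm = (1\mp\sqrt{1-2\beta L})/L$ are real and positive because $\beta L<1/2$), modified to absorb the inexactness, and prove by induction on $j$ the package: $y_j \in B(y_0,\sqrt{2\beta L}/L) \subseteq B(y_0,L^{-1})$, so $D\tilde G(y_j)$ is invertible with $\|D\tilde G(y_j)^{-1}\|_Y \le (1-\sqrt{2\beta L})^{-1}$; the residual contracts, $\|\tilde G(y_j)\|_Y \le \frac{1}{2}(1+\theta^2)^j\beta$; and the increments $\|\delta_j\|_Y$ are summable with partial sums dominated by $t_- = (1-\sqrt{1-2\beta L})/L$. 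I expect the inductive step to be the main obstacle: one must choose the inexact majorizing sequence so that the iterates never leave $B(y_0,L^{-1})$ (keeping $D\tilde G$ invertible) \emph{and} the residuals contract by $\frac{1}{2}(1+\theta^2)$ simultaneously, and checking that $\beta L<1/2$ together with $\theta \le (1-\sqrt{2\beta L})/(1+\sqrt{2\beta L})$ are precisely the thresholds that keep both alive is the crux of the bookkeeping.

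Summability of $\{\|\delta_j\|_Y\}$ makes $\{y_j\}$ Cauchy, hence convergent to some $y_*$, and the geometric bound on $\|\tilde G(y_j)\|_Y$ forces $\tilde G(y_*)=0$, i.e.\ $G(y_*)=0$; comparing the partial sums of the increments against the majorant places $y_*$ in $B(y_0,(1-\sqrt{1-2\beta L})/L)$ and all $y_j$ in $B(y_0,\sqrt{2\beta L}/L)$. Uniqueness of the zero in $B(y_0,L^{-1})$ follows from the standard comparison: if $G(y_{**})=0$ with $y_{**}\in B(y_0,L^{-1})$, then $0 = \tilde G(y_{**})-\tilde G(y_*) = \bigl(\int_0^1 D\tilde G(y_*+t(y_{**}-y_*))\,dt\bigr)(y_{**}-y_*)$, and on $B(y_0,L^{-1})$ the averaged derivative differs from the identity by less than $1$ in operator norm, hence is invertible, so $y_{**}=y_*$.

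Finally, the a posteriori estimate \eqref{eq:inexactNK.3} comes from writing $y_{j+1}-y_* = \delta_j + (y_j-y_*) = -D\tilde G(y_j)^{-1}(\tilde G(y_j)-\tilde r_j) + (y_j-y_*)$ and expanding $\tilde G(y_j) = \tilde G(y_j)-\tilde G(y_*) = D\tilde G(y_j)(y_j-y_*) + \int_0^1\bigl[D\tilde G(y_*+t(y_j-y_*))-D\tilde G(y_j)\bigr](y_j-y_*)\,dt$, so that the term $-D\tilde G(y_j)^{-1}D\tilde G(y_j)(y_j-y_*)$ cancels $(y_j-y_*)$ and what is left is an $O(\|y_j-y_*\|_Y^2)$ contribution, bounded by $\frac{L}{2}\|D\tilde G(y_j)^{-1}\|_Y\|y_j-y_*\|_Y^2 \le \frac{L}{2(1-\sqrt{2\beta L})}\|y_j-y_*\|_Y^2$, plus the inexactness term $D\tilde G(y_j)^{-1}\tilde r_j$, bounded via $\|\tilde r_j\|_Y\le\theta\|\tilde G(y_j)\|_Y$ and once more estimating $\|\tilde G(y_j)\|_Y$ by $\|y_j-y_*\|_Y$ through the same expansion; collecting the two contributions gives the bracketed factor in \eqref{eq:inexactNK.3}. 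Apart from this final manipulation and the central induction, all of the pieces --- Banach lemma, integral remainders, the Cauchy argument, and uniqueness --- are routine.
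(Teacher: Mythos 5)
The paper does not contain a proof of this theorem: it is transcribed verbatim and cited as Theorem~6.3 of Ferreira and Svaiter, and used there as a black box. So there is no ``paper's own proof'' for your sketch to agree or disagree with; in the paper's context, the correct move is simply to invoke the citation.

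Taken on its own terms, your sketch chooses a sensible route --- affine normalization $\tilde G = DG(y_0)^{-1}G$, Banach-lemma invertibility of $D\tilde G$ on the ball of radius $L^{-1}$, the integral remainder identity, and a quadratic-majorant induction --- which is indeed the style of argument used in the Ferreira--Svaiter framework. But it is not a proof. You explicitly defer the central step (``checking that $\beta L<1/2$ together with $\theta \le (1-\sqrt{2\beta L})/(1+\sqrt{2\beta L})$ are precisely the thresholds that keep both alive is the crux of the bookkeeping'') and never actually construct the ``inexact majorizing sequence'' or carry out the induction, so the containment of the iterates in $B(y_0,\sqrt{2\beta L}/L)$, the geometric residual decay, and the summability of $\|\delta_j\|_Y$ are all asserted rather than established. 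Moreover, your proposed inductive package $\|\tilde G(y_j)\|_Y \le \tfrac{1}{2}(1+\theta^2)^j\beta$ already fails at the base case $j=0$: hypothesis~2 only gives $\|\tilde G(y_0)\|_Y \le \beta$, not $\le \beta/2$. This discrepancy (which also appears in the theorem statement as quoted, and most likely indicates the decay should read $\big((1+\theta^2)/2\big)^j\beta$ or should start at $j=1$) is exactly the kind of thing the induction must resolve, and you pass over it. In short: plausible outline, correct machinery identified, but the load-bearing parts are left as promissory notes, so the proposal would not stand as a proof of the cited result.
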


\begin{Theorem}\label{thm:laxmilgram}[Theorem  11.13 of \cite{Kress1998}]
Let $\mc{H}$ be a Hilbert space and let $\|\cdot\|_{\mc{H}}$ denote a norm on $\mc{H}$ induced by an inner product.  Suppose that $\mc{A}:\mc{H}\rightarrow \mc{H}$ is a bounded linear operator that satisfies the following coercivity estimate:  there exists $c > 0$ so that 
\begin{equation}\label{eq:laxmilgram.1}
\|\mc{A} x\|_{\mc{H}}  \geq c \|x\|_{\mc{H}}, \quad \forall x \in \mc{H}.
\end{equation}
Then $\mc{A}$ is invertible and its inverse is a bounded linear operator.
\end{Theorem}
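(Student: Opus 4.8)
The plan is to derive the two conclusions — invertibility of $\mc{A}$ and boundedness of $\mc{A}^{-1}$ — from the coercivity estimate \eqref{eq:laxmilgram.1} in the usual three steps for such results: injectivity, closedness of the range, and surjectivity, after which the bound on the inverse drops out of \eqref{eq:laxmilgram.1} itself. Injectivity is immediate: if $\mc{A}x = 0$ then \eqref{eq:laxmilgram.1} forces $c\|x\|_{\mc{H}} \le 0$, hence $x = 0$. For closedness of $\mathrm{range}\,\mc{A}$, suppose $\mc{A}x_n \to y$; then $\{\mc{A}x_n\}$ is Cauchy, and applying \eqref{eq:laxmilgram.1} to $x_n - x_m$ gives $\|x_n - x_m\|_{\mc{H}} \le c^{-1}\|\mc{A}x_n - \mc{A}x_m\|_{\mc{H}}$, so $\{x_n\}$ is Cauchy with some limit $x \in \mc{H}$; continuity of $\mc{A}$ yields $\mc{A}x = y$, so $y \in \mathrm{range}\,\mc{A}$.

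The substantive step is surjectivity. Since $\mathrm{range}\,\mc{A}$ is now a closed subspace of the Hilbert space $\mc{H}$, one has $\mc{H} = \mathrm{range}\,\mc{A} \oplus (\mathrm{range}\,\mc{A})^{\perp}$ together with $(\mathrm{range}\,\mc{A})^{\perp} = \ker\mc{A}^{*}$, so it suffices to show $\mc{A}^{*}$ is injective. This is the one place requiring care: the bare lower bound \eqref{eq:laxmilgram.1} by itself only produces a bounded left inverse and does not force the adjoint to be injective (an isometric unilateral shift satisfies \eqref{eq:laxmilgram.1} yet is not surjective). One closes this gap by using the structure underlying the hypothesis in the Lax--Milgram setting, namely that $\mc{A}$ is the operator associated via $a(x,y) = \langle \mc{A}x, y\rangle$ to a bounded, strictly coercive sesquilinear form $a$: the coercivity $\mathrm{Re}\,a(x,x) \ge c\|x\|_{\mc{H}}^{2}$ is inherited by the adjoint form $a^{*}(x,y) = \overline{a(y,x)}$, whose operator is $\mc{A}^{*}$, and Cauchy--Schwarz then gives $\|\mc{A}^{*}x\|_{\mc{H}} \ge c\|x\|_{\mc{H}}$ for all $x$. (Self-adjointness of $\mc{A}$ would serve the same purpose.) Applying the injectivity argument of the first step to $\mc{A}^{*}$ gives $\ker\mc{A}^{*} = \{0\}$, hence $\mathrm{range}\,\mc{A} = \mc{H}$, and combined with injectivity this makes $\mc{A}$ a bijection.

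Finally, the inverse bound: for $y \in \mc{H}$ put $x = \mc{A}^{-1}y$; then \eqref{eq:laxmilgram.1} gives $\|\mc{A}^{-1}y\|_{\mc{H}} = \|x\|_{\mc{H}} \le c^{-1}\|\mc{A}x\|_{\mc{H}} = c^{-1}\|y\|_{\mc{H}}$, so $\mc{A}^{-1} \in \mc{L}(\mc{H})$ with $\|\mc{A}^{-1}\|_{\mc{L}(\mc{H})} \le c^{-1}$; alternatively, boundedness of $\mc{A}^{-1}$ follows from the bounded inverse theorem once bijectivity is in hand. I expect the surjectivity step to be the only genuine obstacle — the coercivity estimate must be leveraged through the adjoint (via the coercive-form origin of $\mc{A}$ or via self-adjointness), since \eqref{eq:laxmilgram.1} alone is insufficient; injectivity, closedness of the range, and the inverse bound are all short direct consequences of \eqref{eq:laxmilgram.1}.
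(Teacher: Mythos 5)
Your central observation is correct and important, and there is no paper-internal proof to compare against: the paper states this result as Theorem~11.13 of \cite{Kress1998} without proof. As transcribed here the theorem is in fact false. The one-sided bound \eqref{eq:laxmilgram.1} alone does not imply surjectivity, and your unilateral-shift counterexample is exactly on point: on $\ell^2$ the shift $(x_1,x_2,\hdots)\mapsto(0,x_1,x_2,\hdots)$ is an isometry, so it satisfies \eqref{eq:laxmilgram.1} with $c=1$, yet it is not onto. The coercivity hypothesis that actually makes Lax--Milgram work, and is what one finds in Kress, is $\mathrm{Re}\langle\mc{A}x,x\rangle\geq c\|x\|_{\mc{H}}^2$; by Cauchy--Schwarz this gives \eqref{eq:laxmilgram.1} for both $\mc{A}$ and $\mc{A}^{*}$, which is precisely what your argument needs. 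Under that corrected hypothesis every step of your proof is sound: injectivity and closedness of the range drop out of \eqref{eq:laxmilgram.1}, the Hilbert-space decomposition $\mc{H}=\mathrm{range}\,\mc{A}\oplus\ker\mc{A}^{*}$ reduces surjectivity to injectivity of the adjoint, the form (or self-adjointness) coercivity supplies that, and the bound $\|\mc{A}^{-1}\|\leq c^{-1}$ then follows directly from \eqref{eq:laxmilgram.1}.

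You should also be aware that this is not merely a pedantic slip in a quoted theorem. In Lemmas~\ref{lem:DxRinvboundknown} and~\ref{lem:DxRhatlem} the paper establishes exactly the one-sided estimate \eqref{eq:laxmilgram.1} for $\partial_1 R_N(x,T)$ and $D\hat R_N(x,\tau)$ and then invokes Theorem~\ref{thm:laxmilgram} to conclude invertibility. When $X$ (and hence $X^{2N+1}$) is infinite-dimensional, that conclusion does not follow from the lower bound alone; one would additionally need a matching coercivity bound for the adjoint, or some other argument for surjectivity (and Assumptions~\ref{asp:known}--\ref{asp:unknown} as written bound solutions \emph{when they exist} but do not obviously assert solvability). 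Your instinct that surjectivity is the only genuine obstacle, and that \eqref{eq:laxmilgram.1} cannot clear it without further structure, is exactly the right diagnosis.
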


\subsection{Convergence when the period is known}\label{sec:known}

Consider the following linear DAE:
\begin{equation}\label{eq:lindae2}
\sum_{j=1}^{k+1} T^{-(j-1)} A_j(q(t),T)  u^{(j-1)} = g(t), \quad g \in L^2, \quad q \in \tilde{C}^k, \quad N \in N_0.
\end{equation}
We assume the following for the remainder of Section \ref{sec:known}.
\begin{Assumption}\label{asp:known}
The following two hypotheses are satisfied:
\begin{enumerate}
    \item The DAE \eqref{eq:dae1} satisfies the KP-assumption in Assumption \ref{asp:KPUP}.
    
    \item There exists $\mc{A}, K_L >0$ so that if $q \in \tilde{C}^k$, $N \in \mb{N}_0$, and 
    $$\underset{t\in\mb{R}}{sup}\| P_N A_j(q(t),T) - A_j(p(t),T)\|_{\mc{L}(X)} \leq \mc{A}, \quad j =1,\hdots,k+1$$
    then any solution $v \in \tilde{C}^k$ to \eqref{eq:lindae2} satisfies $\|v\|_{L^2} \leq K_L\|g\|_{L^2}$.
\end{enumerate}

\end{Assumption}
The second statement in Assumption \ref{asp:known} is an isolatedness condition on $p$.  If \eqref{eq:dae1} is a finite dimensional first-order ODE, then the second statement in Assumption \ref{asp:known} is satisfied whenever the linear variational equation associated to $p$ has exponential dichotomy or equivalently, if none of the Floquet exponents of the linear variational equation associated to $p$ is equal to $1$.   The next lemma uses this isolatedness condition to prove invertibility of $\partial_1 R_N(x,T)$ when $N$ is sufficiently large and $x$ is sufficiently close to $\alpha_N$.  We remark (see Remark \ref{rmk:isoremark}) that we could replace the second statement in Assumption \ref{asp:known} with the conclusion of Lemma \ref{lem:DxRinvboundknown} without substantial modification to the statement and proof of Theorem \ref{thm:known}.
\begin{Lemma}\label{lem:DxRinvboundknown}
If $N \geq N_1(\delta(\mc{A}))$ and $x \in \mc{B}(\alpha_N,\delta_N(\mc{A}))$, then $\partial_1 R_{N}(x,T)$ is invertible and its inverse satisfies:
\begin{equation}\label{eq:DxRinvboundknown.1}
\|\partial_1 R_{N}(x,T)^{-1}\| \leq K_{L}.
\end{equation}
\end{Lemma}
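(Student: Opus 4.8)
The plan is to reduce the two claims — invertibility of $\partial_1 R_N(x,T)$ and the bound \eqref{eq:DxRinvboundknown.1} on its inverse — to a single coercivity estimate $\|[\partial_1 R_N(x,T)]y\| \ge K_L^{-1}\|y\|$ valid for all $y \in X^{2N+1}$, and then to invoke the Lax--Milgram variant Theorem \ref{thm:laxmilgram} (with $\mc{H} = X^{2N+1}$ and $c = K_L^{-1}$) together with the boundedness of $\partial_1 R_N(x,T)$ already recorded in Lemma \ref{lem:DxRlem}. The coercivity estimate itself will come from the isolatedness hypothesis Assumption \ref{asp:known}, transported through the linear-DAE characterization of $\partial_1 R_N$ in Lemma \ref{lem:DxRlem} and the $L^2$-isometry property $\|Q_N(\cdot)\|_{L^2} = \|\cdot\|$ from Lemma \ref{lem:Qlem1}.

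In detail, I would proceed as follows. First, apply Lemma \ref{lem:Alem1} with $\ve := \mc{A}$ and $\tau := T$: since $\delta_N(\mc{A}) = \zeta_N(\delta(\mc{A}))$, the present hypotheses $N \ge N_1(\delta(\mc{A}))$ and $x \in \mc{B}(\alpha_N,\delta_N(\mc{A}))$, together with $|T-T| = 0 \le \delta(\mc{A})$, are exactly the hypotheses of Lemma \ref{lem:Alem1}, so $\sup_{t\in\mb{R}} \|P_N A_{N,j}(x,T)(t) - A_j(p(t),T)\|_{\mc{L}(X)} \le \mc{A}$ for $j = 1,\dots,k+1$; thus the projected coefficient operators $P_N A_{N,j}(x,T) = P_N A_j(Q_N(x),T)$ satisfy the smallness hypothesis of Assumption \ref{asp:known}(2) with $q = Q_N(x) \in \tilde{C}^k$. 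Second, fix an arbitrary $y \in X^{2N+1}$ and put $z := [\partial_1 R_N(x,T)]y$; Lemma \ref{lem:DxRlem} says that $Q_N(y) \in \tilde{C}^k$ is then a $1$-periodic solution of the linear DAE \eqref{eq:lindae2} with these projected coefficients and right-hand side $g = Q_N(z)$, so Assumption \ref{asp:known}(2) yields $\|Q_N(y)\|_{L^2} \le K_L\|Q_N(z)\|_{L^2}$. Third, use \eqref{eq:Qlem1.0} of Lemma \ref{lem:Qlem1} to rewrite this as $\|y\| \le K_L\|z\| = K_L\|[\partial_1 R_N(x,T)]y\|$, which is the sought coercivity. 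Finally, combine this with the boundedness of $\partial_1 R_N(x,T)$ (Lemma \ref{lem:DxRlem}) via Theorem \ref{thm:laxmilgram} to obtain both invertibility and $\|\partial_1 R_N(x,T)^{-1}\| \le K_L$.

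Most of this is bookkeeping: confirming that the radius $\delta_N(\mc{A})$ and the index threshold $N_1(\delta(\mc{A}))$ quoted in the statement are precisely what Lemma \ref{lem:Alem1} requires when $\ve = \mc{A}$, and that the DAE produced by differentiating $R_N$ in $x$ (Lemma \ref{lem:DxRlem}) coincides with the linear DAE \eqref{eq:lindae2} to which Assumption \ref{asp:known} refers. The one point I would be careful about — and essentially the only non-mechanical idea — is that Assumption \ref{asp:known}(2) supplies only an a priori bound (``any solution satisfies $\ldots$'') and not solvability of \eqref{eq:lindae2}; accordingly the argument never attempts to solve the linearized DAE directly, but uses the a priori bound solely to certify coercivity of $\partial_1 R_N(x,T)$ on the finite-harmonic Hilbert space $X^{2N+1}$, leaving Theorem \ref{thm:laxmilgram} to upgrade coercivity plus boundedness to invertibility with a controlled inverse.
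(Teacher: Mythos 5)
Your proposal is correct and follows essentially the same route as the paper's proof: verify the smallness condition of Assumption \ref{asp:known}(2) via Lemma \ref{lem:Alem1}, transport the a priori bound through the linear-DAE characterization of $\partial_1 R_N$ in Lemma \ref{lem:DxRlem} and the isometry of $Q_N$ to obtain coercivity with constant $K_L^{-1}$, and conclude invertibility with bound from Theorem \ref{thm:laxmilgram}. The only cosmetic difference is that the paper records the inverse bound as a short separate computation after invoking Lax--Milgram (since Theorem \ref{thm:laxmilgram}, as stated, certifies only that the inverse is bounded), whereas you fold it into the coercivity observation; both readings are immediate and equivalent.
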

\begin{proof}
Lemmas \ref{lem:RisC1}-\ref{lem:DxRlem} imply that $\partial_1 R_N(x,T)$ is a well-defined and bounded linear operator $X^{2N+1}\rightarrow X^{2N+1}$.  Since $X^{2N+1}$ is a Hilbert space, Theorem \ref{thm:laxmilgram} then implies that $\partial_1 R_N(x,T)$ is invertible if it satisfies the following coercivity condition:
\begin{equation}\label{eq:DxRinvboundknown.2}
\|[\partial_1 R_N(x,T)]y\| \geq K_L^{-1} \|y\|, \quad \forall y \in X^{2N+1}.
\end{equation}
We prove Estimate \eqref{eq:DxRinvboundknown.2}.  Let $y \in X^{2N+1}$ and set $z := \partial_1 R_N(x,T) y $. Lemma \ref{lem:DxRlem} implies that:
\begin{equation}\label{eq:DxRinvboundknown.3}
\sum_{j=0}^{k} T^{-(j-1)} P_N A_{N,j}(x,T)  Q_N^{(j-1)}(y)= Q_{N}(z).
\end{equation}
Since $N \geq N_1(\delta(\mc{A}))$ and $x \in \mc{B}(\alpha_N,\delta_N(\mc{A}))$, Estimate \eqref{eq:Alem1.2} of Lemma \ref{lem:Alem1} implies that:
\begin{equation}\label{eq:DxRinvboundknown.4}
\underset{t\in\mb{R}}{\sup}\|P_{N} A_{N,j}(x,T)(t) - A_j(p(t),T)\|_{\mc{L}(X)} \leq \mc{A}, \quad j=1,\hdots,k+1.
\end{equation}
Assumption \ref{asp:known} therefore implies that:
\begin{equation}\label{eq:DxRinvboundknown.5}
\|Q_N(y)\|_{L^2} \leq K_L \|Q_N(z)\|_{L^2}.
\end{equation}
Lemma \ref{lem:Qlem1} and Estimate \eqref{eq:DxRinvboundknown.5} then imply that:
$$\|y\| =  \|Q_N(y)\|_{L^2} \leq K_L\|Q_N(z)\|_{L^2}  = K_L\|z\| = K_L\|[\partial_1 R_N(x,T)]y\|.$$
Thus, Estimate \eqref{eq:DxRinvboundknown.2} is satisfied and $\partial_1 R_N(x,T)$ is invertible.  If $0 \neq \lambda \in X^{2N+1}$, then $Q_N(R_N(x,T)^{-1} \lambda)$ is a well-defined $\tilde{C}^k$ solution to Equation \eqref{eq:DxRinvboundknown.2} with $g = Q_N(\lambda)$ and $q = Q_N(x)$.  Lemma \ref{lem:Qlem1} and Assumption \ref{asp:known} then imply that:
$$\|\partial_1 R_N(x,T)^{-1} \lambda\|  = \|Q_N(\partial_1 R_N(x,T)^{-1} \lambda)\|_{L^2} \leq K_L\|Q_N(\lambda)\|_{L^2} = K_L\|\lambda\|.$$
Therefore $\|\partial_1 R_N(x,T)^{-1}\| \leq K_L$.
\end{proof}
For the remainder of Section \ref{sec:known} we use the following abbreviated notation:
\begin{equation*}
\delta = \delta(\mc{A}), \quad \delta_N = \delta_N(\mc{A}), \quad  N_1 = N_1(\delta(\mc{A})), \quad R_N(\cdot,T) = R_N(\cdot).
\end{equation*}
The following theorem establishes the convergence rate of the HB method in terms of $N$ when the period $T$ is known exactly.  In summary, it shows that if $N$ is sufficiently large and $K_p(N)$ converges fast enough relative to $M_1$ and $\rho_1$, then the HB residual function $R_N(\cdot)$ has a unique zero $x_*^N$ nearby $\alpha_N$ where (i) any inexact Newton process whose relative residual satisfies a sufficiently small tolerance converges to $x_*^N$ at the expected rate when the initial guess $x_0^N$ is sufficiently close to $\alpha_N$; (ii)  $Q_N(x_*^N)$ converges to $p$ in $H^k$ with convergence rate determined by $K_p(N)$ and structure of the DAE via $M_1$ and $\rho_1$.
\begin{Theorem}\label{thm:known}
Fix $\ve \in (0,1/2)$ and assume that:
\begin{equation}\label{eq:known.0}
K_p(N) = \mathcal{O}((N+1)^{-\sigma}), \quad \sigma > 2(M_1 + \rho_1).
\end{equation}
  There exists $N_2 \geq N_1$ and constants $C, C_0,\hdots,C_k > 0$ so that if $N \geq N_2$ and $\xi_N > 0$ is such that 
$$\xi_N \leq \text{min}\{\delta_N,C(N+1)^{-(M_1+\rho_1)}(1+2\pi N + \hdots (2\pi N)^k)^{-1}\},$$
then there exists a unique zero $x_*^N$ of $R_N(\cdot,T)$ in $\mc{B}(\alpha_N,\xi_N)$ and the following hold:
\begin{enumerate}
    \item The following estimates hold:
    \begin{equation}\label{eq:known.1}
\|p^{(j)} - Q_N^{(j)}(x_*^N)\|_{L^2} \leq C_j (N+1)^{M_1 + \rho_1 + j} K_p(N), \quad j =0,\hdots,k.
\end{equation}

\item  An inexact Newton process  (defined as in \eqref{eq:inexactNK.1}) for computing a zero of $R_N(\cdot)$ with relative residual tolerance $\theta_N$ satisfying $\theta_N \leq (1-\sqrt{2\ve})/(1+\sqrt{2\ve})$ and initial condition $x_0^N \in \mc{B}(\alpha_N,\xi_N)$ results in a well-defined sequence $\{x_j^N\}_{j=0}^{\infty}$ with $\underset{j \rightarrow \infty}{\lim}\|x_j^N-x_*^N\| =0$ that satisfies the following estimate:
\begin{equation}\label{eq:known.2}
\|x_{j+1}^N-x_*^N\|\leq \hat{C}_{N,1} \|x_j^N-x_*^N\|^2 + \theta_N \hat{C}_{N,2}\|x_j^N-x_*^N\|
\end{equation}
for constants $\hat{C}_{N,1},\hat{C}_{N,2} > 0$ that may depend on $N$.
\end{enumerate}

\end{Theorem}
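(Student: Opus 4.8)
The plan is to derive both parts of Theorem~\ref{thm:known} from the inexact Newton--Kantorovich theorem (Theorem~\ref{thm:inexactNK}), applied to the $C^1$ map $R_N(\cdot,T)$ on $X^{2N+1}$ (Lemma~\ref{lem:RisC1}) with two base points: $y_0=\alpha_N$, to produce the harmonic balance solution $x_*^N$ and control $\|x_*^N-\alpha_N\|$, and $y_0=x_0^N$, to drive the inexact Newton iteration of part~2. In both applications the Kantorovich data will be an affine-invariant Lipschitz constant $L$ of order $(N+1)^{M_1+\rho_1}$ and a residual bound $\beta$ of order $K_p(N)+\big(\sum_{j=0}^k(2\pi N)^j\big)\|y_0-\alpha_N\|$, so that $\beta L\to0$; this is exactly where the quantitative rate hypothesis $\sigma>2(M_1+\rho_1)$ enters.

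First I would fix $N\geq N_1$ and require $\xi_N\leq\delta_N$, so that any base point $y_0\in\mc{B}(\alpha_N,\xi_N)\subseteq\mc{B}(\alpha_N,\delta_N)$ makes $\partial_1R_N(y_0,T)$ invertible with $\|\partial_1R_N(y_0,T)^{-1}\|\leq K_L$ by Lemma~\ref{lem:DxRinvboundknown}. I then set $L:=\tilde c\,(N+1)^{M_1+\rho_1}$ with $\tilde c$ depending only on $k,K_L,L_1$ and large enough that $K_LL_1N^{M_1}\leq L$ and $L^{-1}\leq\zeta_N(1)/2$ for all $N$; the latter is possible precisely by the defining inequality \eqref{eq:rho1def} for $\rho_1$, which compares $\sum_{j=0}^k(2\pi N)^j$ with $(N+1)^{M_1+\rho_1}$. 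Shrinking $C$ so that also $\xi_N\leq\zeta_N(1)/2$, every ball $B(y_0,L^{-1})$ with $y_0\in\mc{B}(\alpha_N,\xi_N)$ lies inside $\mc{B}(\alpha_N,\zeta_N(1))$, on which Lemma~\ref{lem:DxxRlem} gives $\|\partial_1R_N(z,T)-\partial_1R_N(y,T)\|\leq L_1N^{M_1}\|z-y\|\leq L\|z-y\|$; combined with $\|\partial_1R_N(y_0,T)^{-1}\|\leq K_L$ (absorbed into $\tilde c\geq K_LL_1$) this verifies hypothesis~1 of Theorem~\ref{thm:inexactNK}. For hypothesis~2, $\|R_N(y_0,T)\|\leq\|\mc{F}_N(y_0,T)\|_{L^2}$ by \eqref{eq:DxxRlem.1}, and estimate \eqref{eq:defJ1J2} (with $\tau=T$; the bound $\|Q_N(y_0)-p\|_{H^k}\leq1$ coming from Lemma~\ref{lem:Qlem2}) together with \eqref{eq:Qpbound} gives $\|\mc{F}_N(y_0,T)\|_{L^2}\leq J_1\big(K_p(N)+\sum_{j=0}^k(2\pi N)^j\|y_0-\alpha_N\|\big)$, so I take $\beta^{(0)}:=K_LJ_1K_p(N)$ at $y_0=\alpha_N$ and $\beta:=K_LJ_1\big(K_p(N)+C(N+1)^{-(M_1+\rho_1)}\big)$ at $y_0=x_0^N$, using the hypothesis on $\xi_N$.

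It then remains to make $\beta L\leq\ve$ (hence $<1/2$). Since $\beta L\leq K_LJ_1\tilde c\big(K_p(N)(N+1)^{M_1+\rho_1}+C\big)$, and \eqref{eq:known.0} with $\sigma>2(M_1+\rho_1)>M_1+\rho_1\geq k\geq1$ makes $K_p(N)(N+1)^{M_1+\rho_1}=\mathcal{O}\big((N+1)^{M_1+\rho_1-\sigma}\big)\to0$, there is $N_2\geq\max\{N_1,N_1(\delta(1))\}$ with $K_p(N)(N+1)^{M_1+\rho_1}\leq C$ for $N\geq N_2$, and then choosing $C$ small (depending only on $K_L,J_1,\tilde c,\ve$) forces $\beta L\leq\ve$ and $\beta^{(0)}L\leq\ve$. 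Theorem~\ref{thm:inexactNK} at $\alpha_N$ yields the unique zero $x_*^N$ of $R_N(\cdot,T)$ in $B(\alpha_N,L^{-1})\supseteq\mc{B}(\alpha_N,\xi_N)$, with $\|x_*^N-\alpha_N\|\leq(1-\sqrt{1-2L\beta^{(0)}})/L\leq2\beta^{(0)}=2K_LJ_1K_p(N)$. Writing $p^{(j)}-Q_N^{(j)}(x_*^N)=(p^{(j)}-p_N^{(j)})+Q_N^{(j)}(\alpha_N-x_*^N)$ via $p_N=Q_N(\alpha_N)$ and \eqref{eq:Qlem1.2}, and bounding the summands by $K_p(N)$ (estimate \eqref{eq:pfourierbound}) and $(2\pi N)^j\|\alpha_N-x_*^N\|$ (estimate \eqref{eq:Qlem1.1}), gives $\|p^{(j)}-Q_N^{(j)}(x_*^N)\|_{L^2}\leq(1+2K_LJ_1(2\pi)^j)(N+1)^jK_p(N)$, which implies \eqref{eq:known.1} with $C_j:=1+2K_LJ_1(2\pi)^j$ (since $(N+1)^{M_1+\rho_1}\geq1$). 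For part~2, Theorem~\ref{thm:inexactNK} at $x_0^N$ --- valid because $\theta_N\leq(1-\sqrt{2\ve})/(1+\sqrt{2\ve})\leq(1-\sqrt{2\beta L})/(1+\sqrt{2\beta L})$, the last step since $x\mapsto(1-\sqrt{2x})/(1+\sqrt{2x})$ decreases and $\beta L\leq\ve$ --- produces a well-defined inexact Newton sequence $\{x_j^N\}$ converging to the unique zero of $R_N(\cdot,T)$ in $B(x_0^N,L^{-1})$; since $\|x_*^N-x_0^N\|\leq2K_LJ_1K_p(N)+\xi_N<L^{-1}$ for $N\geq N_2$ (again by $\sigma>2(M_1+\rho_1)$ and $C$ small), that zero is $x_*^N$, and \eqref{eq:known.2} is \eqref{eq:inexactNK.3} with $\hat{C}_{N,1}:=L(1+\theta_N)/\big(2(1-\sqrt{2\beta L})\big)$ and $\hat{C}_{N,2}:=(1+\sqrt{2\beta L})/(1-\sqrt{2\beta L})$.

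I expect the main obstacle to be the simultaneous calibration of $\tilde c$ (hence $L$), the constant $C$ (hence the admissible radius $\xi_N$), and $N_2$: $L$ must be taken above the genuine Lipschitz constant $K_LL_1N^{M_1}$ \emph{and} large enough that $B(y_0,L^{-1})$ remains inside the region $\mc{B}(\alpha_N,\zeta_N(1))$ where Lemmas~\ref{lem:DxRinvboundknown} and \ref{lem:DxxRlem} apply --- the latter forcing $L^{-1}$ to be no larger than $\zeta_N(1)/2$, which is of size roughly $N^{-k}$ --- yet $\beta L$ must stay at most $\ve$. Reconciling ``$L$ at least of order $N^{M_1}$'' with ``$L^{-1}$ of order at least $N^{-k}$'' is precisely the purpose of $\rho_1$ and its defining inequality \eqref{eq:rho1def}, and keeping $\beta L\to0$ while simultaneously keeping $x_*^N$ --- which lies within $\mathcal{O}(K_p(N))$ of $\alpha_N$ --- inside the balls of radius of order $(N+1)^{-(M_1+\rho_1)}$ used in both Newton--Kantorovich applications is exactly what the factor-of-two hypothesis $\sigma>2(M_1+\rho_1)$ in \eqref{eq:known.0} guarantees.
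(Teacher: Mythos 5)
Your proposal is correct and follows essentially the same route as the paper: invertibility of $\partial_1 R_N$ via Lemma~\ref{lem:DxRinvboundknown}, the affine-invariant Lipschitz bound of order $(N+1)^{M_1+\rho_1}$ from Lemma~\ref{lem:DxxRlem} together with \eqref{eq:rho1def}, the residual bound from \eqref{eq:DxxRlem.1} and \eqref{eq:defJ1J2}--\eqref{eq:Qpbound}, and two invocations of Theorem~\ref{thm:inexactNK} (at $\alpha_N$ and at $x_0^N$) whose unique zeros are identified by a ball-containment argument. A minor point in your favor: your bound $\|x_*^N-\alpha_N\|\leq 2K_LJ_1K_p(N)$ is the correct evaluation of $2\beta_N(\alpha_N)$, whereas the paper writes an inflated expression with an extra factor of $\hat L(N+1)^{M_1+\rho_1}$; your resulting $C_j=1+2K_LJ_1(2\pi)^j$ is therefore sharper than the paper's $C_j=1+2K_LJ_1\hat L(2\pi)^j$, though both yield estimate~\eqref{eq:known.1}.
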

\begin{proof}
The proof is an application of Theorem \ref{thm:inexactNK}.  Assume that $N \geq N_1$ and $x_0^N \in \mc{B}(\alpha_N,\xi_N)$ for some $\xi_N \leq \delta_N$.  Since $\xi_N \leq \delta_N$, Lemma \ref{lem:DxRinvboundknown} implies that $\partial_1 R_N(x_0^N)$ is invertible with:
$$\|\partial_1 R_N(x_0^N)^{-1}\| \leq K_L.$$  
We next estimate $\|\partial_1 R_N(x_0^N)^{-1} ( \partial_1 R_N(z) - \partial_1 R_N(y))\|$. If $y,z \in \mc{B}(\alpha_N,\delta_N)$, then since \eqref{eq:dae1} satisfies the KP-assumption and with $L_1$ as defined as in \eqref{eq:DxxRlem.2a}:
\begin{align*}
\| & \partial_1 R_N(x_0^N)^{-1} (\partial_1 R_N(z) - \partial_1 R_N(y))\| \\
& \leq \|\partial_1 R_N(x_0^N)^{-1}\| \cdot \|\partial_1 R_N(z) - \partial_1 R_N(y)\|\\
& \leq K_L \|\partial_1 R_N(z) - \partial_1 R_N(y)\| && (\|\partial_1 R_N(x_0^N)^{-1}\| \leq K_L)\\
& \leq K_L L_1 N^{M_1}\|z-y\| && \text{(Lemma \ref{lem:DxxRlem}, Estimate \eqref{eq:DxxRlem.2a})}\\
& \leq \underbrace{\text{max}\{K_L L_1, 2\delta^{-1} (2\pi)^{k} \}}_{:= \hat L}(N+1)^{M_1 + \rho_1}\|z-y\|.
\end{align*}
We define $L_N > 0$ as:
\begin{equation}\label{eq:Ldef}
L_N = \hat{L}(N+1)^{M_1 +\rho_1}.
\end{equation} 
By construction $L_N^{-1} \leq \delta_N$.  Therefore $\mc{B}(\alpha_N,L_N^{-1}) \subseteq \mc{B}(\alpha_N,\delta_N)$ so:
\begin{equation}\label{eq:known.NK1}
\|\partial_1 R_N(x_0^N)^{-1}(\partial_1 R_N(z) - \partial_1 R_N(y))\| \leq L_N \|z-y\|,  \quad \forall y,z \in \mc{B}(\alpha_N,L_N^{-1}).
\end{equation}
We next estimate $\|\partial_1 R_N(x_0^N)^{-1}R_N(x_0^N)\|$:
\begin{align*}
\|\partial_1 R_N(x_0^N)^{-1}R_N(x_0^N)  \| & \leq \|\partial_1 R_N(x_0^N)^{-1}\| \cdot \|R_N(x_0^N)\| \\
& \leq K_L \|R_N(x_0^N)\| && (\|\partial_1 R_N(x_0^N)^{-1}\|\leq K_L) \\
& \leq K_L \|\mc{F}_N(x_0^N,T)\|_{L^2}. && \text{(Lemma \ref{lem:DxxRlem}, Estimate \eqref{eq:DxxRlem.1})}
\end{align*}
Since $\|x_0^N - \alpha_N\| \leq \xi_N \leq \delta_N \leq \delta(\mc{A}) \leq 1$ and $N \geq N_1$, Lemma \ref{lem:Qlem2} implies: 
$$\|Q_N(x_0^N) -p\|_{H^k} \leq \delta(\mc{A}) \leq 1.$$
Estimate \eqref{eq:defJ1J2} with $\tau = T$ then implies that:
$$\|\mc{F}_N(x_0^N,T)\|_{L^2} \leq J_1\|Q_N(x_0^N) - p\|_{H^k}.$$
Therefore:
$$\|\partial_1 R_N(x_0^N)^{-1}R_N(x_0^N)\|  \leq  K_L J_1 \|Q_N(x_0^N) - p\|_{H^k}.$$
This together with Estimate \eqref{eq:Qpbound} implies:
\begin{equation}\label{eq:known.NK2}
\|\partial_1  R_N(x_0^N)^{-1}R_N(x_0^N)\|  \leq K_L J_1\Bigr(\sum_{j=0}^{k} (2\pi N)^j\Bigr)\|x_0^N - \alpha_N\|+ K_L J_{1} K_p(N) \equiv \beta_N(x_0^N). 
\end{equation}
Then 
$$\beta_N  (x_0^N) L_N \leq  K_L J_{1} L_N \sum_{j=0}^{k}(2\pi N)^j \|x_0^N - \alpha_N\| + K_L J_{1} L_N K_p(N).$$
Estimate \eqref{eq:known.0} implies there exists $N_* \geq N_1$ such that if $N \geq N_*$, then:
$$ K_L J_{1} L_N = K_L J_{1}\hat L  (N+1)^{M_1+ \rho_1}K_p(N) \leq \ve/2.$$
We let $C > 0$ be any constant such that:
\begin{equation}\label{eq:known.Cdef}
C \leq \text{min}\{\ve / (2 K_L J_1 \hat L ),\hat L^{-1}/2\}.
\end{equation}
So, if
\begin{equation}\label{eq:known.asp}
N \geq N_*, \quad \xi_N \leq \text{min}\{\delta_N,C(N+1)^{-(M_1+\rho_1)}(1+2\pi N + \hdots + (2\pi N)^k)^{-1}\},
\end{equation}
then 
\begin{equation}\label{eq:known.xnltnl}
\beta_N(x_0^N) L_N \leq \ve < 1/2, \quad \xi_N \leq L_N^{-1}/2.
\end{equation}
For the remainder of the proof, assume $N$ and $\xi_N$ satisfy the estimates in \eqref{eq:known.asp}.  Suppose that we approximate a zero of $R_N(\cdot)$ using an inexact Newton method with initial guess $x_0^N \in \mc{B}(\alpha_N,\xi_N)$ and relative residual tolerance $\theta_N$ satisfying:
$$\theta_N\leq (1-\sqrt{2 \ve})/(1+\sqrt{2 \ve}) \leq (1-\sqrt{2 \beta_N(x_0^N) L_N})/(1+\sqrt{2\beta_N(x_0^N) L_N}).$$
As shown above, Estimates \eqref{eq:known.NK1}-\eqref{eq:known.NK2} are satisfied with $\beta_N(x_0^N) L_N \leq \ve < 1/2$.  Theorem \ref{thm:inexactNK} then implies that the sequence $\{x_j^N\}_{j=0}^{\infty}$ generated from this inexact Newton procedure is well-defined and there exists $x_*^N$ which is the unique zero of $R_N(\cdot)$ in $\mc{B}(x_0^N,L_N^{-1})$ and the following estimates hold for all $j \in \mb{N}_0$:
$$\|x_*^N-x_0^N\|\leq (1-\sqrt{1-2\beta_N(x_0^N) L_N})/L_N \leq 2 \beta_N(x_0^N),$$
\begin{align*}\|& x_{j+1}^N -x_*^N\| \\
& \leq \underbrace{\left[ \frac{(1+\theta_N)L_N}{2(1-\sqrt{2 \beta_N(x_0^N) L_N})}\right]}_{:= \hat C_{N,1}}\|x_{j}^N-x_*^N\|^2 + \theta_N\underbrace{\left[\frac{(1+\sqrt{2\beta_N(x_0^N) L_N})}{1-\sqrt{2 \beta_N(x_0^N) L_N}} \right]}_{:= \hat C_{N,2}}\|x_j^N - x_*^N\|.
\end{align*}
This proves Estimate \eqref{eq:known.2}.  Repeating the above process with $\alpha_N$ as the initial guess in the inexact Newton process implies that there exists a zero $y_*^N$ of $R_N(\cdot)$ with 
\begin{equation*}
\|\alpha_N - y_*^N\| \leq 2 \beta_N(\alpha_N) = 2 K_L J_1 \hat L (N+1)^{M_1 + \rho_1} K_p(N).
\end{equation*}
Estimate \eqref{eq:known.0} implies that there exists $N_2 \geq N_*$ so that if $N \geq N_2$, then:
\begin{equation}\label{eq:known.yNdef}
2 K_L J_1 \hat L (N+1)^{M_1 + \rho_1} K_p(N) \leq \hat L^{-1} (N+1)^{-M_1 -\rho_1}/2 = L_N^{-1}/2.
\end{equation}
So, if $N \geq N_2$, then:
\begin{align*}
\|y_*^N  - x_0^N\| & \leq \|y_*^N - \alpha_N\| + \|\alpha_N - x_0^N\| && \text{(Triangle inequality)} \\
& \leq L_N^{-1}/2 + \|\alpha_N - x_0^N\| && \text{(Estimate \eqref{eq:known.yNdef})}\\
& \leq  L_N^{-1}/2 + \xi_N && \text{($\|\alpha_N - x_0^N\| \leq \xi_N$)} \\
& \leq L_N^{-1}/2 + L_N^{-1}/2 = L_N^{-1} && \text{(Estimate \eqref{eq:known.xnltnl})}
\end{align*}
Therefore $y_*^N \in \mc{B}(x_0^N,L_N^{-1})$ whenever $N \geq N_2$.  So, if $N \geq N_2 \geq N_*$, then the facts that $R_N(y_*^N) = 0$ and $x_*^N$ is the unique zero of $R_N$ in $\mc{B}(x_0^N,L_N^{-1})$ whenever $N \geq N_*$ implies that $y_*^N = x_*^N$ and therefore:
\begin{equation}\label{eq:known.last}
\|x_*^N - \alpha_N \| = \|y_*^N - \alpha_N\| \leq 2 K_L J_1 \hat L (N+1)^{M_1 + \rho_1} K_p(N), \quad N \geq N_2.
\end{equation}
We then have the following estimate:
\begin{align*}
\|p^{(j)} & - Q_N^{(j)}(x_*^N)\|_{L^2}  \\
&\leq \|p^{(j)} - Q_N^{(j)}(\alpha_N)\|_{L^2} + \|Q_N^{(j)}(\alpha_N) - Q_N^{(j)}(x_*^N)\|_{L^2} && \text{(Triangle inequality)} \\
& \leq  K_p(N) + \|Q_N^{(j)}(\alpha_N) - Q_N^{(j)}(x_*^N)\|_{L^2} && \text{(Estimate \eqref{eq:pfourierbound})} \\
& \leq K_p(N) + (2\pi N)^j\|x_*^N-\alpha_N\| && \text{(Lemma \ref{lem:Qlem1})}\\
& \leq  (1 + 2 K_L J_1 \hat L  (2\pi N)^j (N+1)^{M_1 + \rho_1})K_p(N) && \text{(Estimate \eqref{eq:known.last})} \\
& \leq (1 + 2 K_L J_1 \hat L (2\pi)^{j}) (N+1)^{M_1 + \rho_1 + j} K_p(N).
 \end{align*}
This proves Estimate \eqref{eq:known.1} with $C_j = 1 + 2 K_L J_1 \hat L (2\pi)^{j}$  for $j = 0,\hdots,k$.  This completes the proof.
\end{proof}

\subsection{Convergence when the period is unknown}\label{sec:unknown}


In this section we analyze the convergence of the HB method when the period $T$ is unknown.  Our approach, similar to that taken in \cite{Shinohara1981}, studies convergence using an appropriately defined phase condition.  We define a phase condition $\gamma:(L^2)^{k+1} \rightarrow \mb{R}$ such that:
\begin{equation}\label{eq:defgamma}
\gamma \in C^1((L^2)^{k+1},\mb{R}), \quad \gamma(p,p^{(1)},\hdots,p^{(k)}) = 0.
\end{equation}
Let $D \gamma$ denote the Frech\'et derivative of $\gamma$.  We use the following notation where $q \in \tilde{C}^k$, $N \in \mb{N}_0$, and $x \in X^{2N+1}$:
\begin{equation}\label{eq:sigmanotation}
\begin{array}{c} \sigma(q) = \gamma(q,q^{(1)},\hdots,q^{(k)}), \quad D\sigma(q) = D\gamma(q,q^{(1)},\hdots,q^{(k)}),\\
\sigma_N(x) = \sigma(Q_N(x)).
\end{array}
\end{equation}
The following constants are well-defined due to \eqref{eq:defgamma}:
\begin{itemize}
    \item Define $J_{\sigma} > 0$ so that if $q \in \tilde{C}^k$ is such that $\|q-p\|_{H^k} \leq 1$, then:
    \begin{equation}\label{eq:defJsigma}
        \|\sigma(q)\|_{L^2} \leq J_{\sigma}\|q-p\|_{H^k}.
    \end{equation}
    
  \item Let $\delta_{\sigma}(\ve)$ be the greatest number in  $(0,\delta(\ve)]$ such that if $q \in \tilde{C}^k$ is such that $\|q - p\|_{H^k} \leq \delta_{\sigma}(\ve)$, then:
       \begin{equation}\label{eq:Dsigbound}
        \|D \sigma(q) - D \sigma(p) \|_{L^2} \leq \ve.
       \end{equation}

      \item Let $\delta_{N,\sigma}(\ve) := \zeta_N(\delta_{\sigma}(\ve))$.
\end{itemize}
We approximate $p$ and $T$ by computing zeros of $\hat R_N:X^{2N+1} \times (0,\infty) \rightarrow X^{2N+1} \times \mb{R}$ where $\hat R_N$ is defined by:
\begin{equation}\label{eq:defRhat}
\hat R_N(x,\tau) = (R_N(x,\tau),\sigma_N(x)).    
\end{equation}
Lemma \ref{lem:RisC1} and the fact that $\gamma \in C^1((L^2)^{k+1},\mb{R})$ implies that:
\begin{equation}\label{eq:RhatisC1}
\hat R_N \in C^1(X^{2N+1} \times (0,\infty),X^{2N+1}\times \mb{R}), \quad N \in \mb{N}_0.
\end{equation}
We define a linear map $\Gamma:\mc{L}((L^2)^{k+1},\mb{R}) \times \tilde{C}^k \rightarrow X \times  \mb{R}$ by: \begin{equation}\label{eq:Gammadef}
\Gamma(\rho,v) = (v(1)-v(0), \rho \cdot (v,v^{(1)},\hdots,v^{(k)}))
\end{equation}
We make the following assumption for the remainder of Section \ref{sec:unknown}.
\begin{Assumption}\label{asp:unknown}
The following hold:
\begin{enumerate}
\item The DAE \eqref{eq:dae1} satisfies the UP-assumption. 

\item Consider the following boundary value problem (BVP) where $g \in L^2$ and $\beta \in \mb{R}$:
\begin{equation}\label{eq:unknownbvp}
\left\{
\begin{array}{lcr}
(\sum_{j=1}^{k+1} \tau^{-(j-1)} A_j(q,\tau) u^{(j-1)} +  A_{k+2}(q,\tau) \eta,\eta^{(1)}) = (g,0)\\
\Gamma(D \sigma(q),u) = (0,\beta)
\end{array}
\right.
\end{equation}
Assume there exist $\mc{A}, K_{L}> 0$ so that if $q \in \tilde{C}^k$ and $\tau >0$ are such that:
$$\|D\sigma(q) - D\sigma(p)\|_{L^2} \leq \mc{A}, \quad \|P_N A_{k+2}(q,\tau)- A_{k+2}(p,T)\|_{L^2} \leq \mc{A},$$
$$\|P_N A_j(q,\tau) - A_j(p,T)\|_{L^2} \leq \mc{A}, \quad j =1,\hdots,k+1,$$
then any solution $t \mapsto (v(t),\xi_0)$ to \eqref{eq:unknownbvp}, where $v \in \tilde{C}^k$ and $\xi_0 \in \mb{R}$, satisfies
\begin{equation}\label{eq:konwnbvpest}
\left(\|v\|_{L^2}^2 + |\xi_0|^2\right)^{1/2} \leq K_L(\|g\|_{L^2}^2 + |\beta|^2)^{1/2}.
\end{equation}

\item There exists $L_{\sigma,1},\hdots,L_{\sigma,k+1} \geq 0$ so that if $q_1,q_2 \in \tilde{C}^{k}$ are such that $\|q_l - p\|_{H^k} \leq 1$ for $l=1,2$, then:
\begin{align}
\label{eq:defLsig} \|D\sigma(q_1)\cdot (q_1,q_1^{(1)},\hdots,q_1^{(k)})-D\sigma(q_2) & \cdot (q_2,q_2^{(1)},\hdots,q_2^{(k)})\|_{L^2} \\
& \nonumber \leq \sum_{j=1}^{k+1} L_{\sigma,j} \|q_1^{(j-1)} - q_2^{(j-1)}\|_{L^2}.
\end{align}
\end{enumerate}
\end{Assumption}
The second statement in Assumption \ref{asp:unknown} is an isolatedness assumption.  For example, if \eqref{eq:dae1} is a finite dimensional first-order ODE, then this statement will hold if $1$ is a simple eigenvalue of the monodromy matrix associated to $p$ and the phase condition satisfies a transversality condition.  The following lemma uses Assumption \ref{asp:unknown} to give sufficient conditions for invertibility of $D \hat R_N(x,\tau)$ and provides a bound on the norm of its inverse.
\begin{Lemma}\label{lem:DxRhatlem}
Assume that $N \geq N_1(\delta(\mc{A}))$, $x \in \mc{B}(\alpha_N,\delta_{N,\sigma}(\mc{A}))$, and $|\tau - T| \leq \delta(\mc{A})$.  Then $D \hat R_N(x,\tau)$ is invertible and 
\begin{equation}\label{eq:DxRhat.1}
\|[D \hat R_N(x,\tau)]^{-1}\| \leq K_L.
\end{equation}
\end{Lemma}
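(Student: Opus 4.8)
The plan is to mirror the proof of Lemma \ref{lem:DxRinvboundknown}, with the boundary value problem \eqref{eq:unknownbvp} playing the role of the linear DAE \eqref{eq:lindae2} and the isolatedness hypothesis of Assumption \ref{asp:unknown} replacing that of Assumption \ref{asp:known}. Since $X^{2N+1}\times\mb{R}$ is a Hilbert space, and since Lemmas \ref{lem:RisC1}--\ref{lem:DxRlem} give that $\partial_1 R_N(x,\tau)$ and $\partial_2 R_N(x,\tau)$ are bounded while $\gamma\in C^1((L^2)^{k+1},\mb{R})$ together with the chain rule and Lemma \ref{lem:Qlem1} (using $DQ_N^{(j)}(x)=Q_N^{(j)}$) gives that $D\sigma_N(x)$ is bounded, the operator $D\hat R_N(x,\tau)$ is a bounded linear operator on $X^{2N+1}\times\mb{R}$. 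Hence, by the Lax--Milgram variant (Theorem \ref{thm:laxmilgram}), it suffices to establish the coercivity estimate $\|[D\hat R_N(x,\tau)](y,s)\|\geq K_L^{-1}\|(y,s)\|$ for all $(y,s)\in X^{2N+1}\times\mb{R}$; the bound \eqref{eq:DxRhat.1} on the inverse then drops out of the same estimate.

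The heart of the argument is to recognize the linear system $[D\hat R_N(x,\tau)](y,s)=(z,\beta)$ as a Galerkin-projected instance of \eqref{eq:unknownbvp}. Fixing $(y,s)$ and setting $(z,\beta):=[D\hat R_N(x,\tau)](y,s)$, so $z=[\partial_1 R_N(x,\tau)]y+[\partial_2 R_N(x,\tau)]s$ and $\beta=[D\sigma_N(x)]y$, I would apply $Q_N$ and use the identities of Lemma \ref{lem:RisC1} (in particular \eqref{eq:RisC1.d}--\eqref{eq:RisC1.e} and \eqref{eq:RisC1.1}--\eqref{eq:RisC1.2}) to show that $u:=Q_N(y)\in\tilde C^k$ together with the constant function $\eta:=s$ solves \eqref{eq:unknownbvp} with $q=Q_N(x)$ and $g=Q_N(z)$: the relation $\eta^{(1)}=0$ is immediate, $u(1)-u(0)=0$ since $Q_N(y)$ is a $1$-periodic trigonometric polynomial, and the scalar equation $D\sigma(Q_N(x))\cdot(u,u^{(1)},\dots,u^{(k)})=\beta$ is exactly the chain-rule expression for $[D\sigma_N(x)]y$. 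I would then verify the closeness hypotheses in Assumption \ref{asp:unknown}.2 for this $q$ and $\tau$: since $\delta_\sigma(\mc{A})\leq\delta(\mc{A})$ one has $\delta_{N,\sigma}(\mc{A})\leq\delta_N(\mc{A})$, so $x\in\mc{B}(\alpha_N,\delta_N(\mc{A}))$, and Lemmas \ref{lem:Qlem2} and \ref{lem:Alem1} (with the relevant choices of $\ve$) together with $|\tau-T|\leq\delta(\mc{A})$ and \eqref{eq:Dsigbound} deliver the required bounds on $\|P_N A_j(q,\tau)-A_j(p,T)\|_{L^2}$, $\|P_N A_{k+2}(q,\tau)-A_{k+2}(p,T)\|_{L^2}$, and $\|D\sigma(q)-D\sigma(p)\|_{L^2}$.

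Given these, estimate \eqref{eq:konwnbvpest} of Assumption \ref{asp:unknown}.2 yields $(\|Q_N(y)\|_{L^2}^2+|s|^2)^{1/2}\leq K_L(\|Q_N(z)\|_{L^2}^2+|\beta|^2)^{1/2}$; applying the isometry $\|Q_N(\cdot)\|_{L^2}=\|\cdot\|$ from Lemma \ref{lem:Qlem1} and the definition of the norm on $X^{2N+1}\times\mb{R}$ turns this into $\|(y,s)\|\leq K_L\|[D\hat R_N(x,\tau)](y,s)\|$, which is the desired coercivity estimate. Theorem \ref{thm:laxmilgram} then gives that $D\hat R_N(x,\tau)$ is invertible with bounded inverse, and running the same computation with $(y,s):=[D\hat R_N(x,\tau)]^{-1}(\lambda,\mu)$ for arbitrary $(\lambda,\mu)$ (so that $g=Q_N(\lambda)$ and the scalar datum is $\mu$) gives $\|[D\hat R_N(x,\tau)]^{-1}(\lambda,\mu)\|\leq K_L\|(\lambda,\mu)\|$, i.e. \eqref{eq:DxRhat.1}. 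I expect the main obstacle to be precisely the middle step: matching the linearized HB system to \eqref{eq:unknownbvp}, especially the scalar/phase-condition block (which must be handled via the chain rule, with the multiplier $\eta$ realized as a constant function), and then carefully threading the nested smallness constants $\delta(\cdot)$, $\delta_\sigma(\cdot)$, $\delta_N(\cdot)$, $\delta_{N,\sigma}(\cdot)$ and the thresholds $N_1(\cdot)$ so that every closeness hypothesis of Assumption \ref{asp:unknown}.2 is genuinely met; once that is in place the remainder is the Lax--Milgram argument already carried out in Lemma \ref{lem:DxRinvboundknown}.
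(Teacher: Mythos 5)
Your proposal follows the paper's own proof essentially step for step: reduce to coercivity via the Lax--Milgram variant (Theorem~\ref{thm:laxmilgram}), translate $[D\hat R_N(x,\tau)](y,s)=(z,\beta)$ into the Galerkin-projected form of the BVP \eqref{eq:unknownbvp} with $u=Q_N(y)$ and $\eta\equiv s$ constant (using Lemmas~\ref{lem:RisC1}--\ref{lem:DxRlem} and the chain rule for $D\sigma_N$), verify the closeness hypotheses of Assumption~\ref{asp:unknown} via $\delta_{N,\sigma}(\mc{A})\leq\delta_N(\mc{A})$, Lemma~\ref{lem:Qlem2}, Lemma~\ref{lem:Alem1}, and \eqref{eq:Dsigbound}, and transfer the resulting bound back through the isometry of Lemma~\ref{lem:Qlem1}. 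This is the same argument; no gap.
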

\begin{proof}
Lemmas \ref{lem:RisC1}-\ref{lem:DxRlem} imply that $D \hat R_N(x,\tau)$ is a well-defined and bounded linear operator $X^{2N+1}\times \mb{R}\rightarrow X^{2N+1}\times \mb{R}$. Theorem \ref{thm:laxmilgram} then implies that $D \hat R_N(x,\tau)$ is invertible if it satisfies the following coercivity condition:
\begin{equation}\label{eq:DxRinvboundunknown.2}
\|[D \hat R_N(x,\tau)](y,s)\| \geq K_L^{-1} \|(y,s)\|, \quad \forall (y,s) \in X^{2N+1} \times \mb{R}.
\end{equation}
For simplicity of notation, we write:
\begin{equation*}
\begin{array}{c}\hat R_N(x,\tau) = \hat R_N, \quad D \hat R_N(x,\tau) = D \hat R_N, \quad \sigma_N(x) = \sigma_N,  \\
A_{N,j}(x,\tau) = A_{N,j}, \quad j = 1,\hdots,k+2.
\end{array}
\end{equation*}
The definition of $\sigma_N$ and Lemma \ref{lem:Qlem1} imply that the following holds for all $y \in X^{2N+1}$:
\begin{equation}\label{eq:DxRhat.2}
\left(\frac{\partial}{\partial a} \Bigr|_{a=x} \sigma_N(a) \right)y = D\sigma(Q_N(x)) \cdot (Q_N(y),Q_N^{(1)}(y),\hdots,Q_N^{(k)}(y)).
\end{equation}
Lemmas \ref{lem:RisC1}-\ref{lem:DxRlem}, Equation \eqref{eq:DxRhat.2}, and the definition of $\hat R_N$ imply that $D \hat R_N \cdot (y,s) = (z,w)$ where $(y,s),(z,w)\in X^{2N+1} \times \mb{R}$ if and only if the function defined by $t \mapsto (Q_N(y)(t),s)$ is a solution to the following boundary value problem:
\begin{equation}\label{eq:DxRhat.3}
\left\{
\begin{array}{lcr}
(\sum_{j=1}^{k+1} \tau^{-(j-1)} P_N A_{N,j} u^{(j-1)} +  P_N A_{N,k+2} \eta,\eta^{(1)}) = (Q_N(z),0) \\
\Gamma(D \sigma_N(x),(u,\eta)) = (0,w). \\
\end{array}
\right.
\end{equation}
Since $\delta_{N,\sigma}(\mc{A}) = \zeta_N(\delta_{\sigma}(\mc{A}))$, Lemma \ref{lem:Qlem2} implies that $\|Q_N(x) - p\|_{H^k} \leq \delta_{\sigma}(\mc{A})$ and therefore Estimate \eqref{eq:Dsigbound} implies that:
\begin{equation}\label{eq:DxRhat.4}
\|D\sigma(Q_N(x))- D\sigma(p)\|_{L^2} \leq \mc{A}.
\end{equation}
Since $\delta_{N,\sigma}(\mc{A}) \leq \delta_N(\mc{A})$, Lemma \ref{lem:Alem1} implies that: 
\begin{equation}\label{eq:DxRhat.5}
\begin{array}{c}
\underset{t \in [0,1]}{\sup}\|P_N A_{N,j}- A_j(p,T)\|_{\mc{L}(X)} \leq \mc{A}, \quad j =1,\hdots,k+1, \\
\underset{t \in [0,1]}{\sup}\|P_N A_{k+2} - A_{k+2}(p,T)\|_{\mc{L}(\mb{R},X)} \leq \mc{A}.
\end{array}
\end{equation}
We then have:
\begin{align*}
K_L^2\|D\hat R_N \cdot (y,s)\|^2 & = K_L^2\|(z,w)\|^2\\
&= K_L^2(\|Q_N(z)\|_{L^2}^2 + |w|^2) && \text{(Lemma \ref{lem:Qlem1})} \\
& \geq  \|Q_N(y)\|^2_{L^2} + |s|^2 && \text{(Assumption \ref{asp:unknown})}\\
   & = \|y\|^2 + |s|^2 = \|(y,s)\|^2.   && \text{(Lemma \ref{lem:Qlem1})} 
\end{align*}
Therefore $D\hat R_N$ satisfies the coercivity bound \eqref{eq:DxRinvboundknown.2} and is therefore invertible.  Similar estimates as above show that $\|[D\hat R_N]^{-1}\| \leq K_L$.
\end{proof}
Based on Assumption \ref{asp:unknown}, we define $\tilde{M}_2 \in \mb{N}_0$ by
\begin{equation}\label{eq:deftildem2}
\tilde{M}_2 = \text{max}(\{M_2\} \cup \{j-1 : j \in \{1,\hdots,k+1\} \text{ and } L_{j,\sigma} > 0\})
\end{equation}
and define $\rho_2$ as the minimal element of $\mb{N}_0$ so that the following is satisfied:
\begin{equation}\label{eq:rho2def}
1 + (2\pi N) + \hdots (2\pi N)^k \leq (2\pi)^k(N+1)^{\tilde{M}_2 + \rho_2}, \quad N \in \mb{N}_0.
\end{equation}

The following lemma is a consequence of Lemma \ref{lem:DxxRlem}, Assumption \ref{asp:unknown}, and the definitions of $\tilde{M}_2$ \eqref{eq:deftildem2}.
\begin{Lemma}\label{lem:DxxRhatlem}
If $\ve \in (0,1]$, $N \geq N_1(\delta(\ve))$, then the following holds for all $y,z \in \mc{B}(\alpha_N,\zeta_N(\ve))$ and $s,w \in \{\tau:|\tau-T| \leq T/2\}$:
\begin{equation}\label{eq:DxxRhatlem.1}
\|D \hat R_N(y,s)- D \hat R_N(z,w)\| \leq \hat L_2 N^{\tilde{M}_2} \|(y,s)-(z,w)\|
\end{equation}
where $\hat L_2 = L_2 + \sum_{j=1}^{k+1} L_{\sigma,j}(2\pi)^{j-1}$ and $L_2$ is defined by \eqref{eq:defm2}.
\end{Lemma}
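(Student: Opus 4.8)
The plan is to exploit the product form $\hat R_N=(R_N,\sigma_N)$, in which $\sigma_N$ does not depend on $\tau$, and to estimate the two output blocks of $D\hat R_N(y,s)-D\hat R_N(z,w)$ separately before recombining them with the product norm on $X^{2N+1}\times\mb{R}$. For the $R_N$-block, Assumption \ref{asp:unknown}(1) places us in the UP-case, so Lemma \ref{lem:DxxRlem} -- specifically Estimate \eqref{eq:DxxRlem.2b} -- applies verbatim under the present hypotheses ($\ve\in(0,1]$, $N\ge N_1(\delta(\ve))$, $y,z\in\mc{B}(\alpha_N,\zeta_N(\ve))$, and $|s-T|,|w-T|\le T/2$) and gives $\|DR_N(y,s)-DR_N(z,w)\|\le L_2N^{M_2}\|(y,s)-(z,w)\|$; since $\tilde M_2\ge M_2$ by \eqref{eq:deftildem2}, and assuming $N\ge1$ (the case $N=0$ being immediate), this is bounded by $L_2N^{\tilde M_2}\|(y,s)-(z,w)\|$.

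For the $\sigma_N$-block, the relevant object is $[D\sigma_N(y)-D\sigma_N(z)]\lambda$ for $\lambda\in X^{2N+1}$, since the $\tau$-derivative of $\sigma_N$ vanishes. By the chain rule, the linearity of each $Q_N^{(j)}$ (Lemma \ref{lem:Qlem1}), and the identity \eqref{eq:DxRhat.2}, this equals $D\sigma(Q_N(y))\cdot(Q_N(\lambda),\dots,Q_N^{(k)}(\lambda))-D\sigma(Q_N(z))\cdot(Q_N(\lambda),\dots,Q_N^{(k)}(\lambda))$. Because $N\ge N_1(\delta(\ve))\ge N_1(\ve)$, Lemma \ref{lem:Qlem2} yields $\|Q_N(y)-p\|_{H^k},\|Q_N(z)-p\|_{H^k}\le\ve\le1$, so the Lipschitz hypothesis \eqref{eq:defLsig} of Assumption \ref{asp:unknown} is in force; combining it with the bounds $\|Q_N^{(j-1)}(y)-Q_N^{(j-1)}(z)\|_{L^2}\le(2\pi N)^{j-1}\|y-z\|$ and $\|Q_N^{(j-1)}(\lambda)\|_{L^2}\le(2\pi N)^{j-1}\|\lambda\|$ from Lemma \ref{lem:Qlem1} gives $|[D\sigma_N(y)-D\sigma_N(z)]\lambda|\le\bigl(\textstyle\sum_{j=1}^{k+1}L_{\sigma,j}(2\pi N)^{j-1}\bigr)\|y-z\|\,\|\lambda\|\le\bigl(\textstyle\sum_{j=1}^{k+1}L_{\sigma,j}(2\pi)^{j-1}\bigr)N^{\tilde M_2}\|y-z\|\,\|\lambda\|$, the last step using $\tilde M_2\ge j-1$ whenever $L_{\sigma,j}>0$, from \eqref{eq:deftildem2}.

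Recombining: for a unit vector $(\lambda,b)\in X^{2N+1}\times\mb{R}$, $\|[D\hat R_N(y,s)-D\hat R_N(z,w)](\lambda,b)\|^2$ is the sum of the squared norms of the $R_N$- and $\sigma_N$-blocks evaluated at $(\lambda,b)$; using $\|\lambda\|\le1$, $\|y-z\|\le\|(y,s)-(z,w)\|$, and $\sqrt{a^2+c^2}\le a+c$ for $a,c\ge0$ produces \eqref{eq:DxxRhatlem.1} with $\hat L_2=L_2+\sum_{j=1}^{k+1}L_{\sigma,j}(2\pi)^{j-1}$. The step I expect to require the most care is the $\sigma_N$-block: after the chain rule one has $D\sigma$ evaluated at two different base points $Q_N(y),Q_N(z)$ but applied to the jet of the \emph{common} direction $Q_N(\lambda)$, whereas \eqref{eq:defLsig} is stated for $D\sigma$ applied to the jet of its own base point, so reconciling the two shapes is the crux -- one uses bilinearity of $(q,h)\mapsto D\sigma(q)\cdot(h,h^{(1)},\dots,h^{(k)})$ in its second slot to isolate a genuine $D\sigma$-variation term (and notes that for affine phase conditions $D\sigma$ is constant, so this block simply vanishes). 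The rest is routine bookkeeping of powers of $2\pi N$ and citations of Lemmas \ref{lem:Qlem1}, \ref{lem:Qlem2}, and \ref{lem:DxxRlem}.
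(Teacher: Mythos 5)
The overall architecture of your argument is sound: decompose $\hat R_N = (R_N,\sigma_N)$, bound the two output blocks separately, and recombine using the product norm. The $R_N$-block is handled correctly: Estimate~\eqref{eq:DxxRlem.2b} from Lemma~\ref{lem:DxxRlem} applies under exactly the hypotheses you invoke, and promoting $N^{M_2}$ to $N^{\tilde M_2}$ via~\eqref{eq:deftildem2} is fine. The recombination step ($\sqrt{a^2+c^2}\le a+c$, with $\|\lambda\|\le\|(\lambda,b)\|$ and $\|y-z\|\le\|(y,s)-(z,w)\|$) is also correct and reproduces the stated $\hat L_2$. (For reference, the paper states this lemma without proof, merely asserting it follows from Lemma~\ref{lem:DxxRlem}, Assumption~\ref{asp:unknown}, and~\eqref{eq:deftildem2}.)

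The gap is exactly where you flagged it, and your proposed patch does not close it. You need an \emph{operator} Lipschitz bound on $x\mapsto D\sigma_N(x)$, i.e.\ control of $[D\sigma(Q_N(y))-D\sigma(Q_N(z))]\cdot J_\lambda$ uniformly over the direction $J_\lambda=(Q_N(\lambda),\hdots,Q_N^{(k)}(\lambda))$. But~\eqref{eq:defLsig} only controls the map $q\mapsto D\sigma(q)\cdot J_q$, i.e.\ the operator applied to the jet of its own base point. Writing $\Phi(q,h)=D\sigma(q)\cdot J_h$ and using linearity in $h$, the best you can extract from~\eqref{eq:defLsig} is $\Phi(q_1,q_1)-\Phi(q_2,q_2)=\Phi(q_1,q_1-q_2)+[\Phi(q_1,q_2)-\Phi(q_2,q_2)]$, which isolates the $D\sigma$-variation only in the single direction $q_2$; there is no way to recover the variation in an arbitrary direction $h$ from one scalar constraint per pair $(q_1,q_2)$. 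Indeed one can cook up $\gamma$ for which $\Phi(q,q)$ is constant (so~\eqref{eq:defLsig} holds with $L_{\sigma,j}=0$) while $q\mapsto D\sigma(q)$ is not Lipschitz as an operator. So your displayed bound $|[D\sigma_N(y)-D\sigma_N(z)]\lambda|\le\bigl(\sum_j L_{\sigma,j}(2\pi N)^{j-1}\bigr)\|y-z\|\,\|\lambda\|$ does not follow from Assumption~\ref{asp:unknown} as written; what it silently uses is the stronger hypothesis $\|D\sigma(q_1)-D\sigma(q_2)\|_{\mc L((L^2)^{k+1},\mb R)}\le\sum_j L_{\sigma,j}\|q_1^{(j-1)}-q_2^{(j-1)}\|_{L^2}$ together with a componentwise structure for $D\sigma$. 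In short: your instinct that this step is the crux is right, but ``bilinearity in the second slot'' is not a resolution, and the observation that it is vacuous for affine $\gamma$ is not a substitute for the general case; either strengthen the assumption to an operator Lipschitz bound or give an argument that derives one.
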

The proof of the following theorem, which is omitted, follows the same steps as that of Theorem \ref{thm:known}: applying Theorem \ref{thm:inexactNK}, but using Estimate \ref{eq:DxxRhatlem.1} of Lemma \ref{lem:DxxRhatlem} rather than Estimate \eqref{eq:DxxRlem.2a} from Lemma \ref{lem:DxxRlem} and Lemma \ref{lem:DxRhatlem} rather than Lemma \ref{lem:DxRlem}.  Its conclusions are essentially the same as those of Theorem \ref{thm:known}, but now includes estimates for bounds on the approximate period as well as the approximate solution.
\begin{Theorem}\label{thm:unknown}
Fix $\ve \in (0,1/2)$ and assume that:
\begin{equation}\label{eq:unknown.0}
K_p(N) = \mathcal{O}((N+1)^{-\sigma}), \quad \sigma > 2(\tilde{M}_2 + \rho_2).
\end{equation}
  There exists $N_3 \geq N_1$ and constants $D,D_0,\hdots,D_{k+1} > 0$  so that if $N \geq N_3$ and $\xi_N > 0$ is such that: 
$$\xi_N \leq \text{min}\{\delta_{N,\sigma}(\mc{A}),D(N+1)^{-(\tilde{M}_2+\rho_2)}(1+2\pi N + \hdots (2\pi N)^k)^{-1}\},$$
then there exists a unique zero $(x_*^N,\tau_*^N)$ of $\hat R_N$ in $\mc{B}((\alpha_N,T),\xi_N)$ and the following hold:
\begin{enumerate}
    \item  The following estimates hold:
    \begin{equation}\label{eq:unknown.1a}
\|p^{(j)} - Q_N^{(j)}(x_*^N)\|_{L^2} \leq D_j (N+1)^{\tilde{M}_2 + \rho_2 + j} K_p(N), \quad j =0,\hdots,k,
\end{equation}
\begin{equation}\label{eq:unknown.1b}
|\tau_*^N-T| \leq D_{k+1} (N+1)^{\tilde{M}_2+\rho_2} K_p(N).
\end{equation}

\item  The inexact Newton iteration for computing a zero of $R_N$ with relative residual tolerance $\theta_N$ satisfying $\theta_N \leq (1-\sqrt{2\ve})/(1+\sqrt{2\ve})$ and initial condition $(x_0^N,\tau_0^N) \in \mc{B}((\alpha_N,T),\xi_N)$ results in a well-defined sequence $\{(x_j^N,\tau_k^N)\}_{j=0}^{\infty}$ with $\underset{j \rightarrow \infty}{\lim}\|(x_j^N,\tau_j^N)-(x_*^N,\tau_*^N)\| =0$ that satisfies the following estimate:
\begin{align}
\|(x_{j+1}^N &,\tau_{j+1}^N)-(x_*^N,\tau_*^N)\|  \leq  \nonumber \\
&  \hat{D}_{N,1}\|(x_j^N,\tau_j^N)-(x_*^N,\tau_*^N)\|^2 + \theta_N \hat{D}_{N,2}\|(x_j^N,\tau_j^N)-(x_*^N,\tau_*^N)\|  \label{eq:unknown.2}
\end{align}
for constants $\hat{D}_{N,1},\hat{D}_{N,2} > 0$ that may depend on $N$.
\end{enumerate}
\end{Theorem}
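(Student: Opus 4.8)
The plan is to reproduce the proof of Theorem~\ref{thm:known} almost verbatim on the Hilbert space $X^{2N+1}\times\mb{R}$, with the residual $R_N(\cdot,T)$ replaced by $\hat R_N=(R_N,\sigma_N)$, the partial derivative $\partial_1 R_N$ replaced by the Fr\'echet derivative $D\hat R_N$, Lemma~\ref{lem:DxRinvboundknown} replaced by Lemma~\ref{lem:DxRhatlem}, Estimate~\eqref{eq:DxxRlem.2a} of Lemma~\ref{lem:DxxRlem} replaced by Estimate~\eqref{eq:DxxRhatlem.1} of Lemma~\ref{lem:DxxRhatlem}, and the exponents $M_1,\rho_1$ replaced by $\tilde M_2,\rho_2$. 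Fixing $\ve\in(0,1/2)$ and assuming \eqref{eq:unknown.0}, I would take $N\geq N_1(\delta(\mc{A}))$ and an initial guess $(x_0^N,\tau_0^N)\in\mc{B}((\alpha_N,T),\xi_N)$ with $\xi_N\leq\delta_{N,\sigma}(\mc{A})$. Then $\|x_0^N-\alpha_N\|\leq\delta_{N,\sigma}(\mc{A})$ and $|\tau_0^N-T|\leq\delta_{N,\sigma}(\mc{A})\leq\delta(\mc{A})$, so Lemma~\ref{lem:DxRhatlem} applies and gives that $D\hat R_N(x_0^N,\tau_0^N)$ is invertible with $\|[D\hat R_N(x_0^N,\tau_0^N)]^{-1}\|\leq K_L$; this is the invertibility hypothesis of Theorem~\ref{thm:inexactNK} and also the bound used to push $[D\hat R_N(x_0^N,\tau_0^N)]^{-1}$ through the remaining estimates.

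Next I would build the Lipschitz constant. For $(y,s),(z,w)$ in the relevant ball, Lemma~\ref{lem:DxxRhatlem} gives $\|D\hat R_N(y,s)-D\hat R_N(z,w)\|\leq\hat L_2 N^{\tilde M_2}\|(y,s)-(z,w)\|$, hence $\|[D\hat R_N(x_0^N,\tau_0^N)]^{-1}(D\hat R_N(y,s)-D\hat R_N(z,w))\|\leq K_L\hat L_2 N^{\tilde M_2}\|(y,s)-(z,w)\|$. I would then set $L_N=\hat L(N+1)^{\tilde M_2+\rho_2}$, choosing $\hat L$ so that $\hat L\geq K_L\hat L_2$ and, using the defining inequality \eqref{eq:rho2def} of $\rho_2$, so that $L_N^{-1}$ is a small enough fraction of $\delta_{N,\sigma}(\mc{A})=\zeta_N(\delta_{\sigma}(\mc{A}))$; this makes $\mc{B}((x_0^N,\tau_0^N),L_N^{-1})\subseteq\mc{B}((\alpha_N,T),\delta_{N,\sigma}(\mc{A}))$ and, for $N$ large, keeps $|s-T|\leq T/2$ on this ball, so hypothesis~(1) of Theorem~\ref{thm:inexactNK} holds with constant $L_N$. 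For hypothesis~(2) I would bound the residual: apply Estimate~\eqref{eq:DxxRlem.1} to the $R_N$-component together with \eqref{eq:defJ1J2} evaluated at $\tau=\tau_0^N$ (which now contributes a term $J_2|\tau_0^N-T|$ not present in Theorem~\ref{thm:known}) and apply \eqref{eq:defJsigma} to the $\sigma_N$-component, then \eqref{eq:Qpbound} via Lemma~\ref{lem:Qlem2}, to obtain $\|[D\hat R_N(x_0^N,\tau_0^N)]^{-1}\hat R_N(x_0^N,\tau_0^N)\|\leq\beta_N(x_0^N,\tau_0^N)$ with $\beta_N(x_0^N,\tau_0^N)=K_L\bigl[(J_1+J_{\sigma})\bigl(\sum_{j=0}^{k}(2\pi N)^j\bigr)\|x_0^N-\alpha_N\|+J_2|\tau_0^N-T|+(J_1+J_{\sigma})K_p(N)\bigr]$; in particular $\beta_N(\alpha_N,T)=K_L(J_1+J_{\sigma})K_p(N)$.

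From here the argument is that of Theorem~\ref{thm:known}: the decay hypothesis \eqref{eq:unknown.0}, with $\sigma>2(\tilde M_2+\rho_2)$, produces $N_*\geq N_1(\delta(\mc{A}))$ and a constant $D>0$ such that, whenever $N\geq N_*$ and $\xi_N\leq\min\{\delta_{N,\sigma}(\mc{A}),\,D(N+1)^{-(\tilde M_2+\rho_2)}(1+2\pi N+\hdots+(2\pi N)^k)^{-1}\}$, one has $\beta_N(x_0^N,\tau_0^N)L_N\leq\ve<1/2$ and $\xi_N\leq L_N^{-1}/2$. Theorem~\ref{thm:inexactNK} then yields the well-defined inexact Newton sequence, the contraction estimate \eqref{eq:unknown.2} with $\hat D_{N,1},\hat D_{N,2}$ the analogues of $\hat C_{N,1},\hat C_{N,2}$, and a zero $(x_*^N,\tau_*^N)$ of $\hat R_N$ that is unique in $\mc{B}((x_0^N,\tau_0^N),L_N^{-1})$. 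Running the argument again with $(\alpha_N,T)$ as the initial guess yields a zero whose distance to $(\alpha_N,T)$ is at most $2\beta_N(\alpha_N,T)\leq 2K_L(J_1+J_{\sigma})\hat L(N+1)^{\tilde M_2+\rho_2}K_p(N)$; enlarging $N_*$ to $N_3$ via \eqref{eq:unknown.0} makes this bound $\leq L_N^{-1}/2$, so this zero lies in $\mc{B}((x_0^N,\tau_0^N),L_N^{-1})$ and by uniqueness equals $(x_*^N,\tau_*^N)$. Hence $\|x_*^N-\alpha_N\|$ and $|\tau_*^N-T|$ are both at most $2K_L(J_1+J_{\sigma})\hat L(N+1)^{\tilde M_2+\rho_2}K_p(N)$: the second is \eqref{eq:unknown.1b} with $D_{k+1}=2K_L(J_1+J_{\sigma})\hat L$, and combining the first with the triangle inequality, Lemma~\ref{lem:Qlem1}, and \eqref{eq:pfourierbound} gives \eqref{eq:unknown.1a} with $D_j=1+2K_L(J_1+J_{\sigma})\hat L(2\pi)^j$.

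The argument is mostly bookkeeping; relative to Theorem~\ref{thm:known} the genuinely new points are that the residual now carries the $|\tau_0^N-T|$ and $\|\sigma_N\|$ contributions (absorbed by $J_2$ and $J_{\sigma}$), that invertibility and coercivity come from the coupled boundary value problem \eqref{eq:unknownbvp} through Lemma~\ref{lem:DxRhatlem} rather than from the linear DAE \eqref{eq:lindae2}, and that the Lipschitz exponent is $\tilde M_2$ from \eqref{eq:deftildem2}, which already absorbs the phase-condition constants $L_{\sigma,j}$. I expect the main --- still minor --- obstacle to be the nested-inclusion bookkeeping: one must simultaneously control $\|x-\alpha_N\|$ and $|\tau-T|$ on $\mc{B}((x_0^N,\tau_0^N),L_N^{-1})$ so that the hypotheses of Lemmas~\ref{lem:DxRhatlem} and~\ref{lem:DxxRhatlem} are available there, which is exactly what forces the choice $L_N=\hat L(N+1)^{\tilde M_2+\rho_2}$ with $\rho_2$ defined by \eqref{eq:rho2def}, together with the concluding uniqueness step that identifies the two zeros.
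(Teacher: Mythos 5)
Your proposal is correct and matches the paper's (omitted) proof exactly: the paper itself states that Theorem~\ref{thm:unknown} follows the steps of Theorem~\ref{thm:known} after replacing $\partial_1 R_N$ by $D\hat R_N$, Lemma~\ref{lem:DxRinvboundknown} by Lemma~\ref{lem:DxRhatlem}, Estimate~\eqref{eq:DxxRlem.2a} by Estimate~\eqref{eq:DxxRhatlem.1}, and $M_1,\rho_1$ by $\tilde M_2,\rho_2$, which is precisely the substitution you carry out. Your residual bound, with the $J_2|\tau_0^N-T|$ term from \eqref{eq:defJ1J2} and the $J_\sigma$ term from \eqref{eq:defJsigma}, and the choice $L_N=\hat L(N+1)^{\tilde M_2+\rho_2}$ tied to $\delta_{N,\sigma}(\mc{A})$ via \eqref{eq:rho2def}, are the correct analogues of the corresponding quantities in the proof of Theorem~\ref{thm:known}, and the concluding uniqueness argument (re-running with $(\alpha_N,T)$ as the initial guess) yields \eqref{eq:unknown.1a}--\eqref{eq:unknown.1b} with the constants you identify.
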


We close this our discussion of convergence with a remark discussing our motivations for using the isolatedness conditions (Assumptions \ref{asp:known}-\ref{asp:unknown}) in our analysis.
\begin{Remark}\label{rmk:isoremark}
To employ a Newton-Kantorovich type argument to prove existence and uniqueness of a zero of $R_N$ (resp. $\hat R_N$) as was done in Theorem \ref{thm:known} (resp. Theorem \ref{thm:unknown}) requires invertibility of the linearized operator $\partial_1 R_N$ (resp. $D \hat R_N$).  We chose to employ isolatedness conditions (Assumptions \ref{asp:known}-\ref{asp:unknown}) to prove invertibility based on the traditional approaches in \cite{Urabe1965} and \cite{Shinohara1981}.  A more abstract approach would simply be to assume that $\partial_1 R_N(x,T)$ (resp. $D\hat R_N(x,\tau)$) is invertible for all $x$ (resp. $(x,\tau)$) sufficiently near to $\alpha_N$ (resp. ($\alpha_N,T)$).  Our choice was motivated by the observation that invertibility of these linear operators (via Lemma \ref{lem:DxRlem}) and the isolatedness assumptions are both related to the nonautonomous linear DAE obtained by linearizing \eqref{eq:dae2} at $p$.  
\end{Remark}

\section{Examples}\label{sec:examples}

In this section we present several numerical examples arising in structural mechanics and circuit modeling to illustrate the theory developed in Section \ref{sec:convergence}.  Typically, an exact expression for $p$ is not available which means the error estimates from Theorems \ref{thm:known}-\ref{thm:unknown} cannot be applied directly.  We therefore take an indirect approach.  For $N \in \mb{N}_0$, we let $\hat \alpha_N$ denote the computed value of $\alpha_N$ from some inexact Newton process and let $\hat T_N$ denote the computed or exact period when $T$ is unknown or known, respectively.  Set $\hat p_N = Q_N(\hat \alpha_N)$ and define the error $E(N)$ by:
\begin{equation}
E(N) = \|\mc{F}_N(\hat \alpha_N, \hat T_N)\|_{L^2}.
\end{equation}
The smoothness assumptions on $F$ imply that the following estimate holds:
\begin{equation}\label{eq:daeresacc}
E(N) = \mc{O}\left(\|\hat p_N - p_N\|_{H^k} + K_p(N)  + |\hat T_N - T|\right).
\end{equation}
The conclusions of Theorems \ref{thm:known}-\ref{thm:unknown} can then be used to determine asymptotic bounds on $E(N)$ whenever an isolatedness condition (Assumption \ref{asp:known} or \ref{asp:unknown}) is satisfied.  In particular, if $F \in C^{\infty}(X^k \times \mb{R},X)$, then $K_p(N) = \mc{O}(e^{-\kappa_p N})$ for some $\kappa_p > 0$.  By Theorems \ref{thm:known}-\ref{thm:unknown} we would then expect that $E(N) = \mc{O}(e^{-\kappa N})$ where $0 < \kappa \leq \kappa_p$.  We take $g_1 = g_2 = I_X$ throughout this section.  We next empirically verify such exponential bounds on $E(N)$ for several example problems.

\subsection{3D nonautonomous DAE}\label{sec:3ddae}

We consider a variant of the nonlinear circuit network of Example 2 in \cite{lamouretal1998} (see also \cite{barzsuschke1994}).  Let $k=1$ and $X = \mb{R}^3$ with norm given by the standard Euclidean $2$-norm.  Define $G \in C^{\infty}(X^{k+1} \times \mb{R},X)$ by:
\begin{equation}\label{eq:3dtoy}
G(u,u^{(1)} ,t) = \left[\begin{array}{c}
u_1^{(1)} + u_1 - (e^{-u_1 - u_3} -1) \\
u_2^{(1)} + u_2+ u_3 + \sin(2\pi t)\\
u_2 + u_3 +\sin(2\pi t) + e^{u_3} - e^{-u_1-u_3}
\end{array}\right], \quad u = (u_1,u_2,u_3).
\end{equation}
The DAE \eqref{eq:dae2} with $G$ defined by \eqref{eq:3dtoy} and $g_1 = g_2 = I_X$ possesses an isolated periodic solution with $T = 1$ (see e.g. \cite{lamouretal1998}).  Due to Theorem \ref{thm:known} and the fact that $G \in C^{\infty}(X^{k+1} \times \mb{R},X^{k+1})$,  we expect that $E(N) = \mc{O}(e^{-\kappa N})$ for some $\kappa > 0$. 

We now discuss the the inexact Newton method used to compute a zero of $R_N(\cdot,1)$ and approximate the $1$-periodic solution to \eqref{eq:3dtoy}.  The Fourier integrals defining $R_N$ and the integral defining $E(N)$ are approximated with an absolute and relative error tolerance of $10^{-15}$.  The Newton iterates are computed via direct linear solves using finite difference approximations for derivatives.  A relative and absolute error tolerance of $10^{-15}$ is used as the stopping criteria for the Newton iteration.  The initial guess is constructed in two ways resulting in two experiments. In the first experiment the initial guess is the zero vector in $X^{2N+1} = \mb{R}^{3(2N+1)}$ for all $N \geq 2$.  In the second experiment the initial guess is taken to be the zero vector in $X^{2N+1} = \mb{R}^{3(2N+1)}$ when $N = 2$ and then for $N = 4,6,8,\hdots$ the initial guess is the previously computed solution to $R_{N-2}(\cdot,T)$, extended to a vector in $X^{2N+1}$ by appending zeros.  Note that we do not verify if the initial guesses satisfy the hypotheses of Theorem \ref{thm:known}, relying on the conventional wisdom that the Newton-Kantorovich convergence estimates are overly pessimistic.

 In Figures \ref{fig:3dtoy.1}-\ref{fig:3dtoy.2}, we show the results of our two experiments for $N = 2l$ where $1 \leq l \leq 8$.  It is apparent that $E(N)$ is nearly identical in both experiments (due to the uniqueness of the zero of $R_N(\cdot,T)$) and $E(N) = \mc{O}(e^{-\kappa N})$ where $\kappa \approx 2$.  In the first experiment the number of Newton iterations needed to satisfy the error tolerances remains fixed as $N$ increase.  The method of forming the initial guess in the second experiment results in a decreasing number of Newton iterations required as $N$ increases.  While the approximate solutions produced in both of these experiments are essentially identical, the method of forming the initial guess used in the second experiment results in a more computationally efficient method.

\begin{figure}
\includegraphics[scale=0.4]{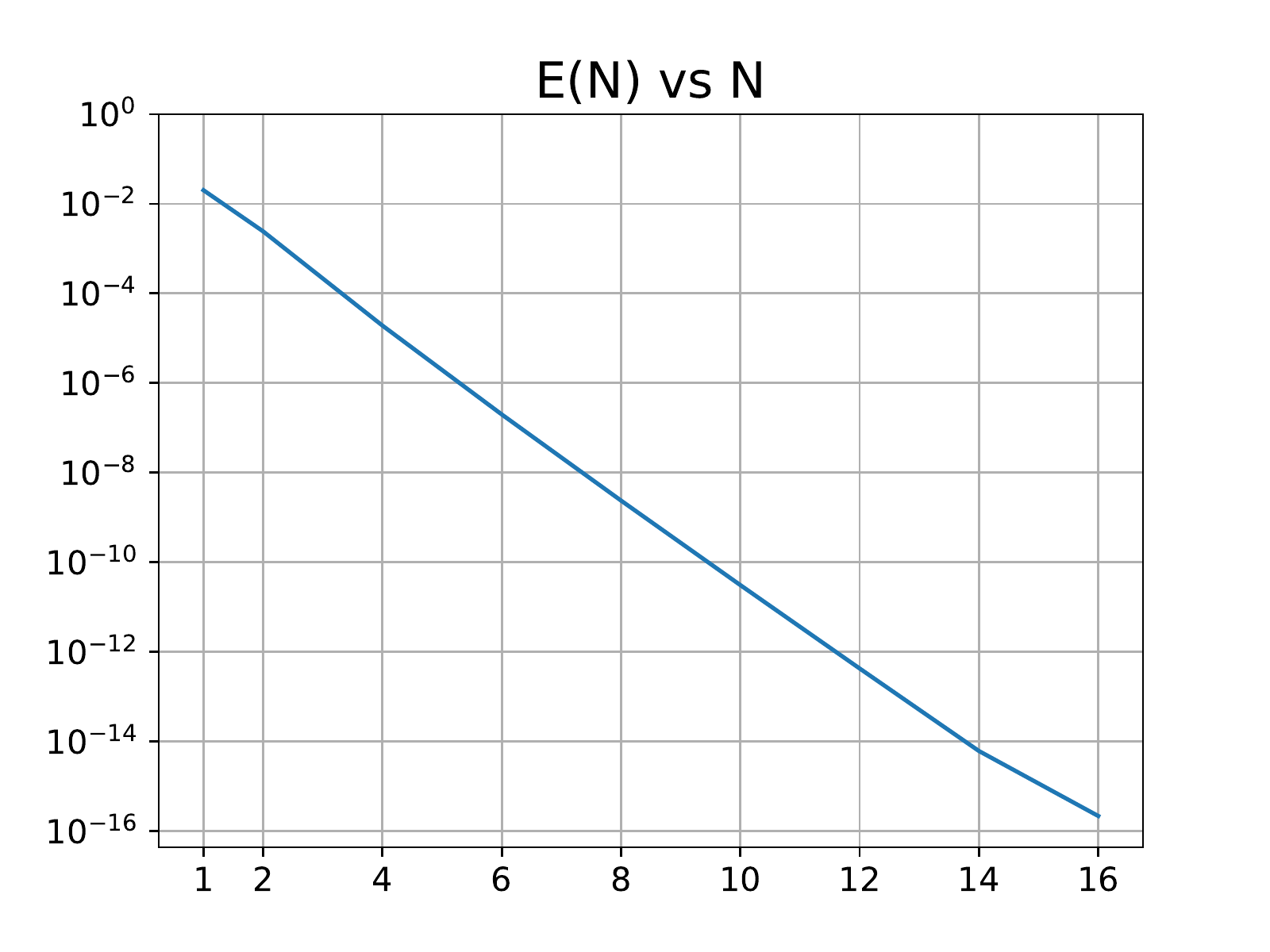}\includegraphics[scale=0.4]{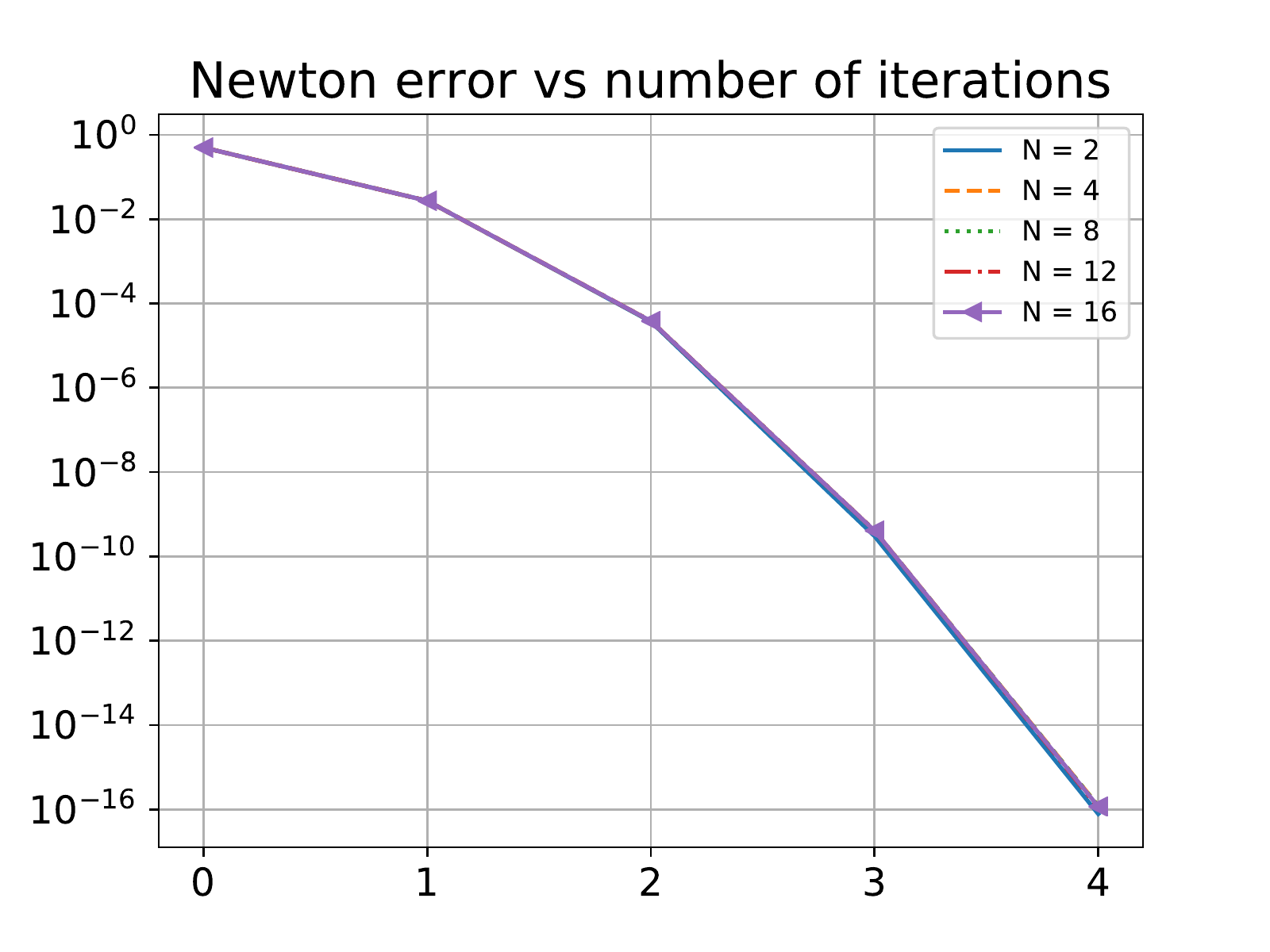}
\caption{Results of the first experiment for approximating the periodic solution to \eqref{eq:dae2} with $F$ defined by \eqref{eq:3dtoy}.  Here the initial guess in the Newton iteration to solve $R_N(\cdot,T)$ ($T=1$) is always set to be the zero vector.  (Left) Convergence plot of the $L^2$ residual error $E(N)$ vs the number of harmonics $N$.  (Right) Convergence plot of the Newton iteration showing the maximum of the absolute and relative error at each Newton iterate.  Note the nearly overlapping error in the Newton solves for all the tested values of $N$.}
\label{fig:3dtoy.1}
\end{figure}

\begin{figure}
\includegraphics[scale=0.4]{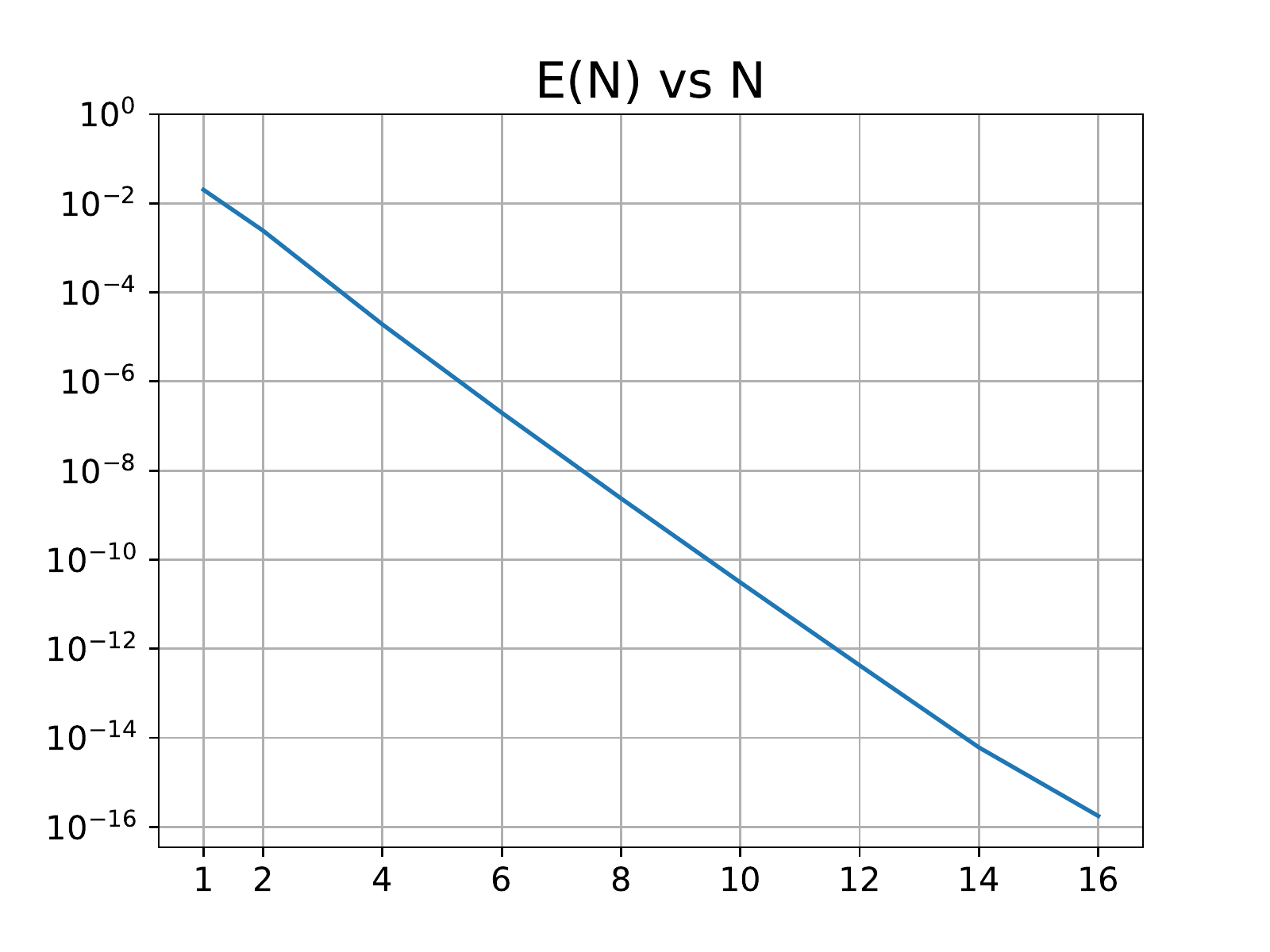}\includegraphics[scale=0.4]{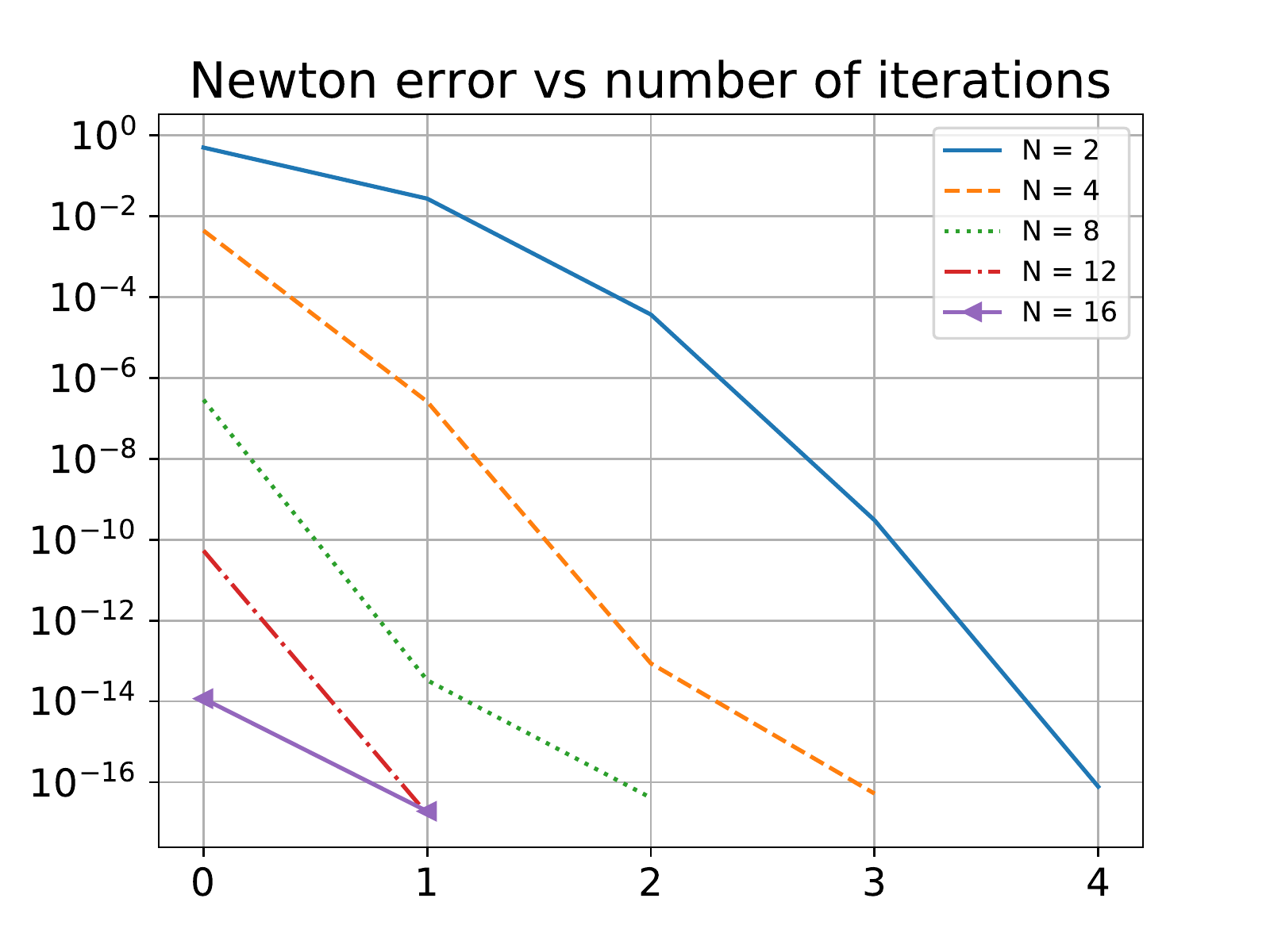}
\caption{Results of the second experiment for approximating the periodic solution to \eqref{eq:3dtoy}.  Here the initial guess in the Newton iteration to solve $R_N(\cdot,T)$ ($T=1$) is the zero vector when $N = 2$ and is given by the previously approximated solution to $R_{N-2}(\cdot,T)$ when $N > 2$.  (Left) Convergence plot of the $L^2$ residual error $E(N)$ vs the number of harmonics $N$.  (Right) Convergence plot of the Newton iteration showing the maximum of the absolute and relative error at each Newton iterate.}
\label{fig:3dtoy.2}
\end{figure}

\subsection{2D second order ODE}\label{sec:2dode}

Consider a model 2D mass-spring system (Equation 1.11 in \cite{ardehallen2014}) defined by a 2D, second order ODE with cubic nonlinearity. Let $k =2$, $X = \mb{R}^2$, and define $G \in C^{\infty}(X^{k+1},X)$ by:
\begin{equation}\label{eq:2dtoy}
G(u,u^{(1)},u^{(2)}) = \left[\begin{array}{c}
u_1^{(2)} \\
u_2^{(2)}
\end{array}\right] +  \underbrace{\left[\begin{array}{cc}2 & -1 \\ -1 & 2 \end{array} \right]}_{:= K}\left[\begin{array}{c}
 u_1 \\
 u_2
\end{array}\right]  + \left[\begin{array}{c}
u_1^3/2 \\
0
\end{array}\right] , \quad u = (u_1,u_2).
\end{equation}
The DAE \eqref{eq:dae2} with $G$ defined by \eqref{eq:2dtoy} and $g_1 = g_2 = I_X$ conserves the energy $H$ defined by:
\begin{equation}\label{eq:2dtoy.energy}
H(u,u^{(1)}) =  (u^{(1)})^T u^{(1)} / 2  + u^T K u / 2 +  u_1^4/8
\end{equation}
and has two branches of periodic solutions emanating from the origin.  Near the origin the periods of these two branches, Branch 1 and Branch 2, correspond respectively to the eigenvalues $\lambda_1 = -1$ and $\lambda_2 = -3$ of $K$.  Away from the origin the periods of the periodic orbits along these branches are unknown.  We therefore enforce a phase condition $\gamma$ defined as follows:
\begin{equation}\label{eq:2dtoy.phasecondition}
\gamma(q,q^{(1)})  = \int_0^1 [q(t)- Q_2(x_0)(t)]^T Q_2^{(1)}(x_0)(t) + [q^{(1)}(t) - Q_2^{(1)}(x_0)(t)]^T Q_2^{(2)}(x_0)(t) dt,
\end{equation}
where $x_0$ is the $X^2 = (\mb{R}^2)^2$ component of the initial guess for the zero of $\hat R_2$ (recall $\hat R_N = (R_N,\sigma_N)$ where $\sigma_N(x) = \gamma(Q_N(x),Q_N^{(1)}(x))$. Due to Theorem \ref{thm:unknown} and the fact that $F \in C^{\infty}(X^{k+1},X)$,  we expect that $E(N) = \mc{O}(e^{-\kappa N})$ for some $\kappa > 0$.

We construct an inexact Newton iteration to compute zeros of $\hat R_N$ where $N \geq 2$ as follows. The Fourier integrals defining $R_N$, the integral defining $\sigma$, and the integral defining $E(N)$ are approximated with an absolute and relative error tolerance of $10^{-15}$.  The Newton iterates are computed via direct linear solves using finite difference approximations for derivatives.   Relative and absolute tolerances of $10^{-15}$ are the stopping criteria used in the Newton iteration.

We set up an experiment to demonstrate convergence on Branches 1-2 with $H \approx 10$ as follows.  An initial guess formed from the eigenvalues and eigenvectors of $K$ is used to approximate the solution to $\hat R_2(x,\tau) = 0$ near the origin.  We then use pseudo-arclength continuation (see \cite[Chapter 4]{Keller1987}) to approximate a branch of solutions on which $\hat R_2 = 0$ until the the approximate periodic solution has $H \approx 10$.  Then, we successively compute zeros of $\hat R_4, \hat R_6,\hat R_8,\hdots,\hat R_{12}$ where the initial condition to compute $\hat R_N$ is the approximated zero of $\hat R_{N-2}$ when $N > 2$.

The results of our experiments are shown in Figures \ref{fig:2dtoy.1}-\ref{fig:2dtoy.2}.  Empirically, we find that the HB method achieves exponential convergence with $\kappa \approx 2.5$ and $\kappa \approx 3$ on Branch 1 and Branch 2, respectively.  The number of Newton iterations required to satisfy the error tolerances decreases with increasing $N$, similar to the second experiment in Section \ref{sec:3ddae}.

\begin{figure}
\includegraphics[scale=0.4]{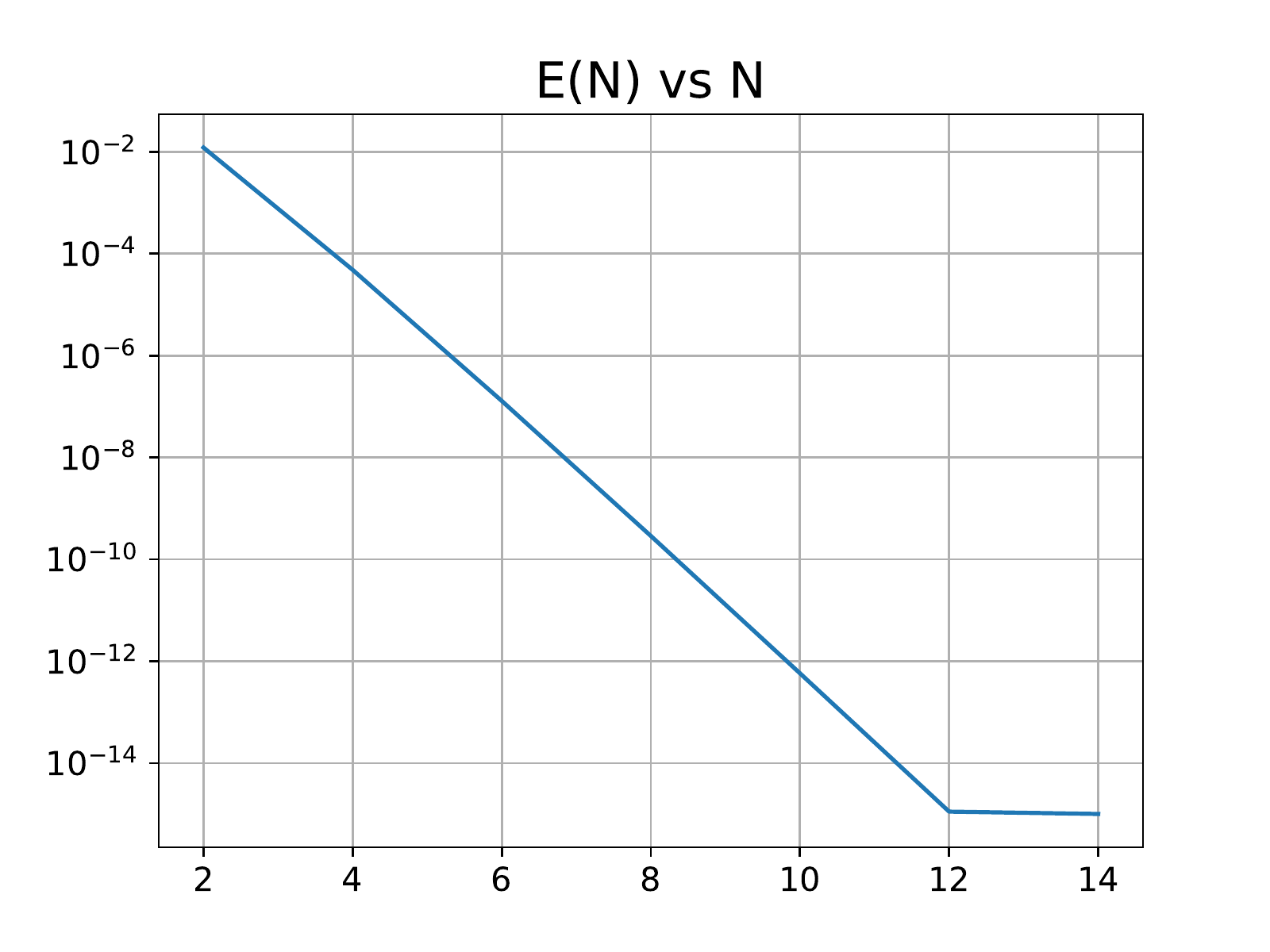}\includegraphics[scale=0.4]{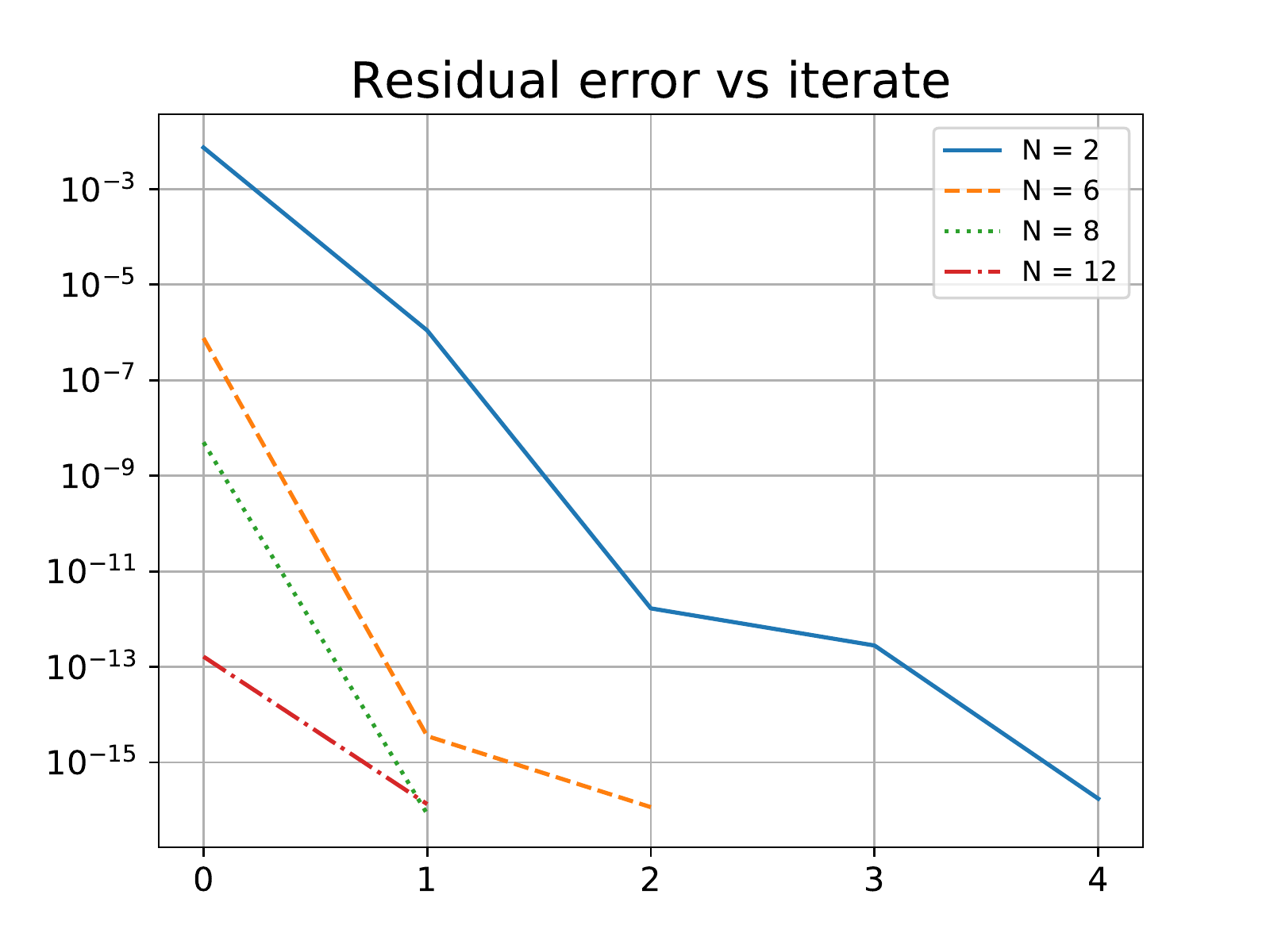}
\caption{Results of numerical experiment for approximating the periodic solution of \eqref{eq:dae2} with $F$ given by \eqref{eq:2dtoy} on Branch 1 with energy $H \approx 10$ .  (Left) Convergence plot of the $L^2$ residual error $E(N)$ vs the number of harmonics $N$.  (Right) Convergence plot of the Newton iteration showing the maximum of the absolute and relative residual error at each Newton iterate. 
}
\label{fig:2dtoy.1}
\end{figure}

\begin{figure}
\includegraphics[scale=0.4]{figures/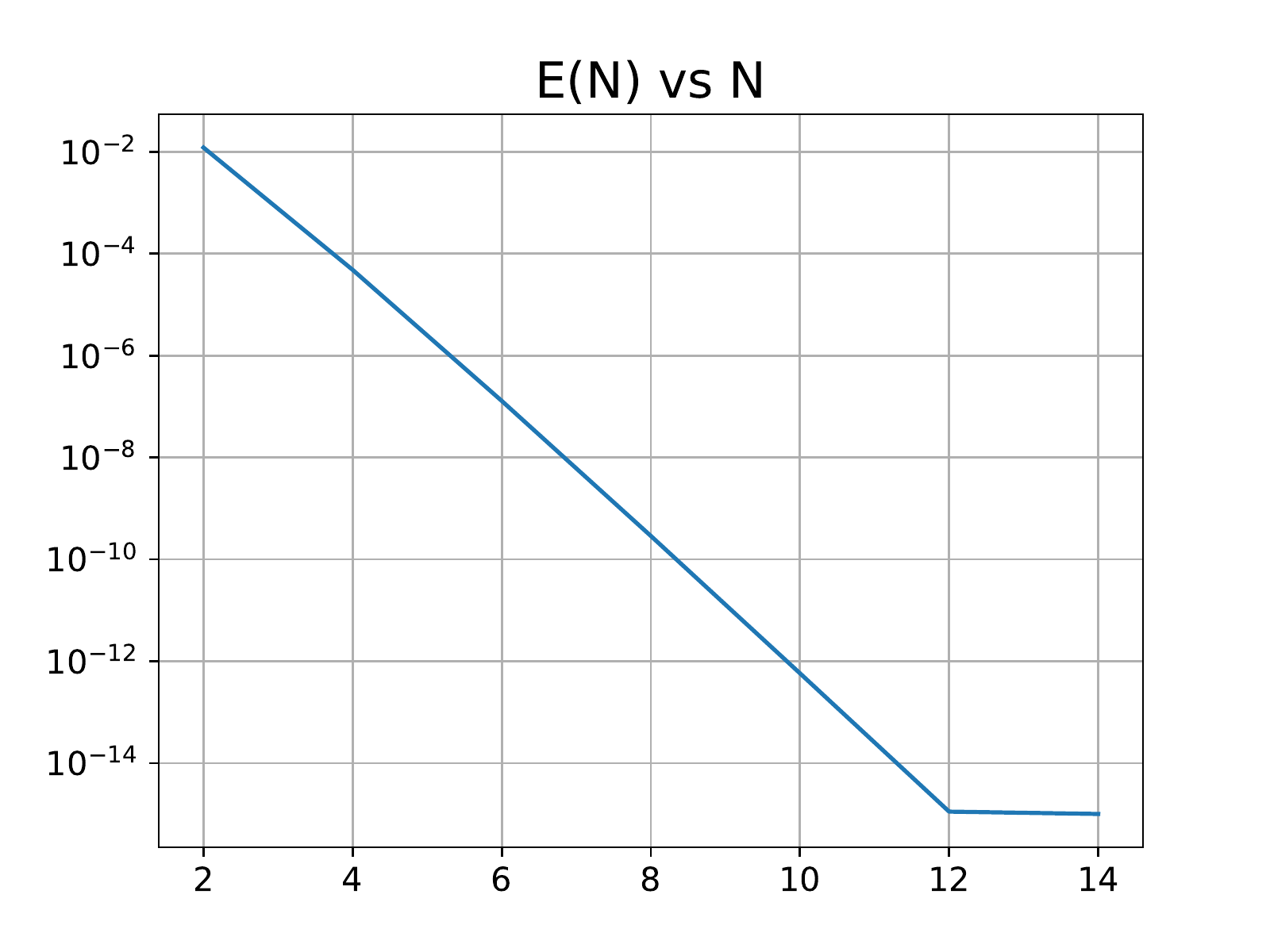}\includegraphics[scale=0.4]{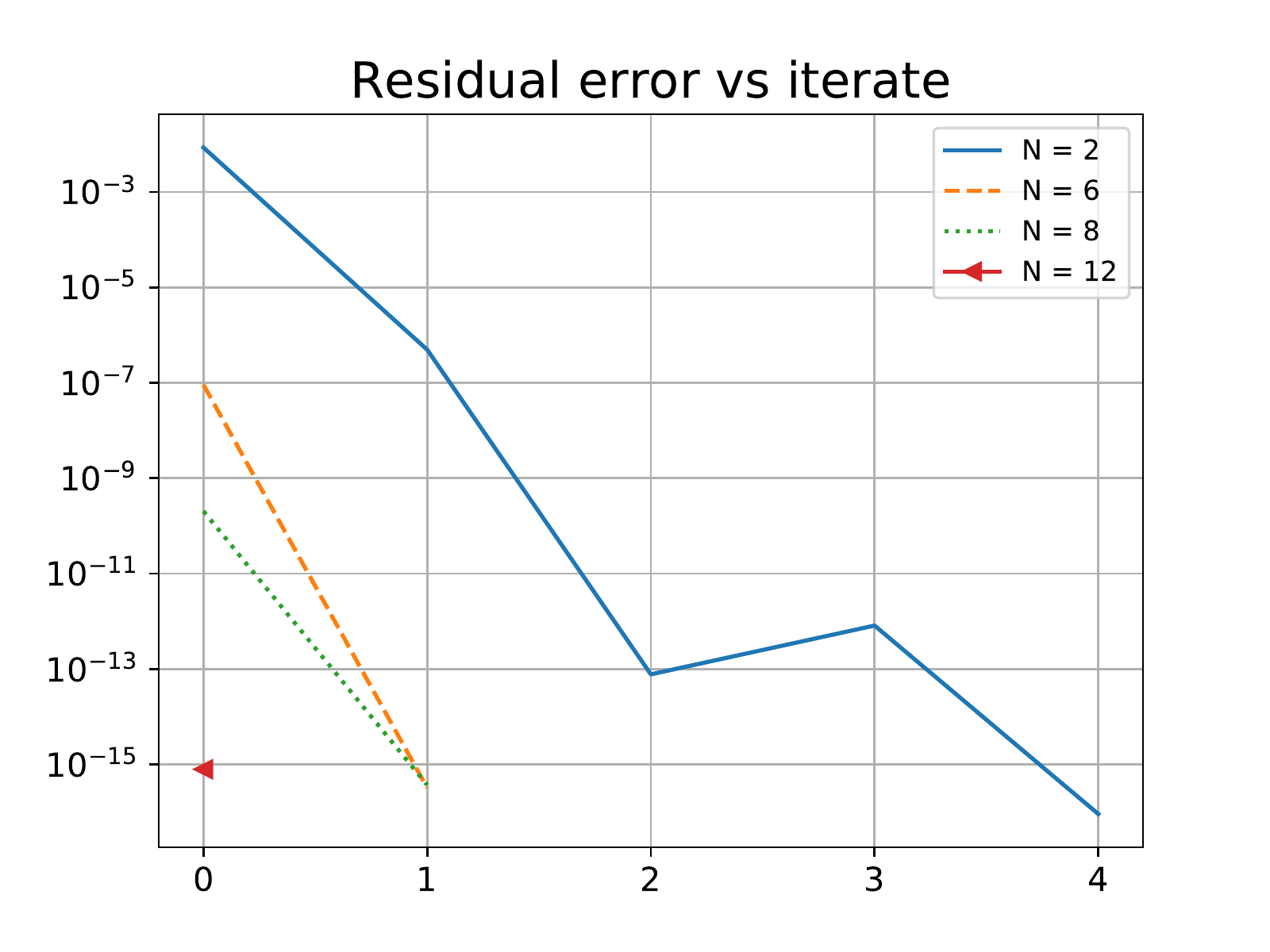}
\caption{Results of numerical experiment for approximating the periodic solution of \eqref{eq:dae2} with $F$ given by \eqref{eq:2dtoy} on Branch 2 with energy $H \approx 10$ .  (Left) Convergence plot of the $L^2$ residual error $E(N)$ vs the number of harmonics $N$.  (Right) Convergence plot of the Newton iteration showing the maximum of the absolute and relative residual error at each Newton iterate. Note the Newton iteration does note require any iterates when $N = 12$ since the initial guess satisfies the residual tolerances.}
\label{fig:2dtoy.2}
\end{figure}

\subsection{Cantilever beam with cubic spring}

In this example we consider a nonlinear mechanical system modeled as a cantilever beam with a cubic nonlinear spring at the beam tip.  The beam elements are modeled with structural steel material properties with a Young’s modulus of 205 GPa and a density of $7800$ kg/m$^3$.  We assume a constant square cross section with characteristic length of $1.4$ cm and a total length of 70 cm. At the beam tip, a grounded cubic spring with a stiffness constant of $6\cdot 10^9$ N/m$^3$ is attached to the end node in the transverse direction.   The beam is discretized into 19 linear 2D Euler-Bernoulli beam elements with two translational and one rotational degree of freedom at each node (see e.g. \cite[Chapter 2]{cooketa1998}), resulting in the following 57 dimensional DAE:
\begin{equation}\label{eq:cantileverbeam}
G(u,u^{(1)},u^{(2)},s t) = M u^{(2)} + C u^{(1)}  + K u + f_{nl}(u) - f(s t) , \quad u = u(t) \in \mb{R}^{57}.
\end{equation}
The matrices $M$, $C$, and $K$ correspond to the mass, viscous damping, and linear stiffness matrices, $f_{nl} \in C^{\infty}(\mb{R}^{57},\mb{R}^{57})$ is the cubic nonlinear spring applied to the degree of freedom corresponding to the transverse tip displacement, $f \in C^{\infty}(\mb{R},\mb{R}^{57})$ is a $2\pi$-periodic harmonic excitation function, and $s > 0$ is a parameter determining the forcing frequency.  We take $k=2$ and $X = \mb{R}^{57}$ with the standard Euclidean 2-norm.  Since $G \in C^{\infty}(X^{k+1} \times \mb{R},X)$ and due to Theorem \ref{thm:known}, we expect that $E(N) = \mc{O}(e^{-\kappa N})$ for some $\kappa > 0$. 

We describe the inexact Newton method we use to compute zeros of $R_N$.  Derivatives and Fourier integrals are formed using the alternating frequency/time method in \cite{CameronGriffin1989}.  An iterative GMRES linear solver with a relative residual tolerance of $\theta_N = 10^{-6}$ is used for the linear solve required at each Newton iterate.  The stopping criteria for the Newton iteration were (i) the absolute residual error is less than $10^{-10}$ and (ii) the Newton update ($\delta_j$ in Equation \eqref{eq:inexactNK.1} in Theorem \ref{thm:inexactNK}) has norm at most $10^{-6}$.  

In our study, for each $N$ we use pseudo-arclength continuation (see \cite[Chapter 4]{Keller1987}) to construct a branch of periodic solutions to \eqref{eq:cantileverbeam} using $s$ as the continuation parameter.  We let $E(N,s)$ denote the value of $E(N)$ for a given value of $s$  and we let $EB(N)$ be the maximal value of $E(N,\cdot)$ over the values of $s$ used in the continuation loop.  The continuation loop initializes with a zero vector initial guess.  

The results of our experiments are shown in Figure \ref{fig:cantileverbeam}.  Note that $E(N,\cdot)$ is not a function of $s$ since there can be two distinct solutions to \eqref{eq:cantileverbeam} with the same period.  The HB method empirically achieves exponential convergence $EB(N) = \mc{O}(e^{-\kappa_B N})$ with $\kappa_B \approx 1.2$.  We note that we obtain convergence of $EB(N)$ down to $10^{-9}$ which is consistent with the absolute tolerance enforced in the Newton iteration. We first remark that asymptotic convergence rate first becomes apparent when $5 \leq N \leq 9$.  The behavior of the error term $EB(N)$ varies widely for $N \leq 9$ and is especially variable for $N \leq 5$.   Second, we remark that although the tendency is that $EB(N)$ rapidly decays to zero with increasing $N$, the values of $E(N,\cdot)$ can vary by orders of magnitude along the branch of approximate solutions.

\begin{figure}
\includegraphics[scale=0.39]{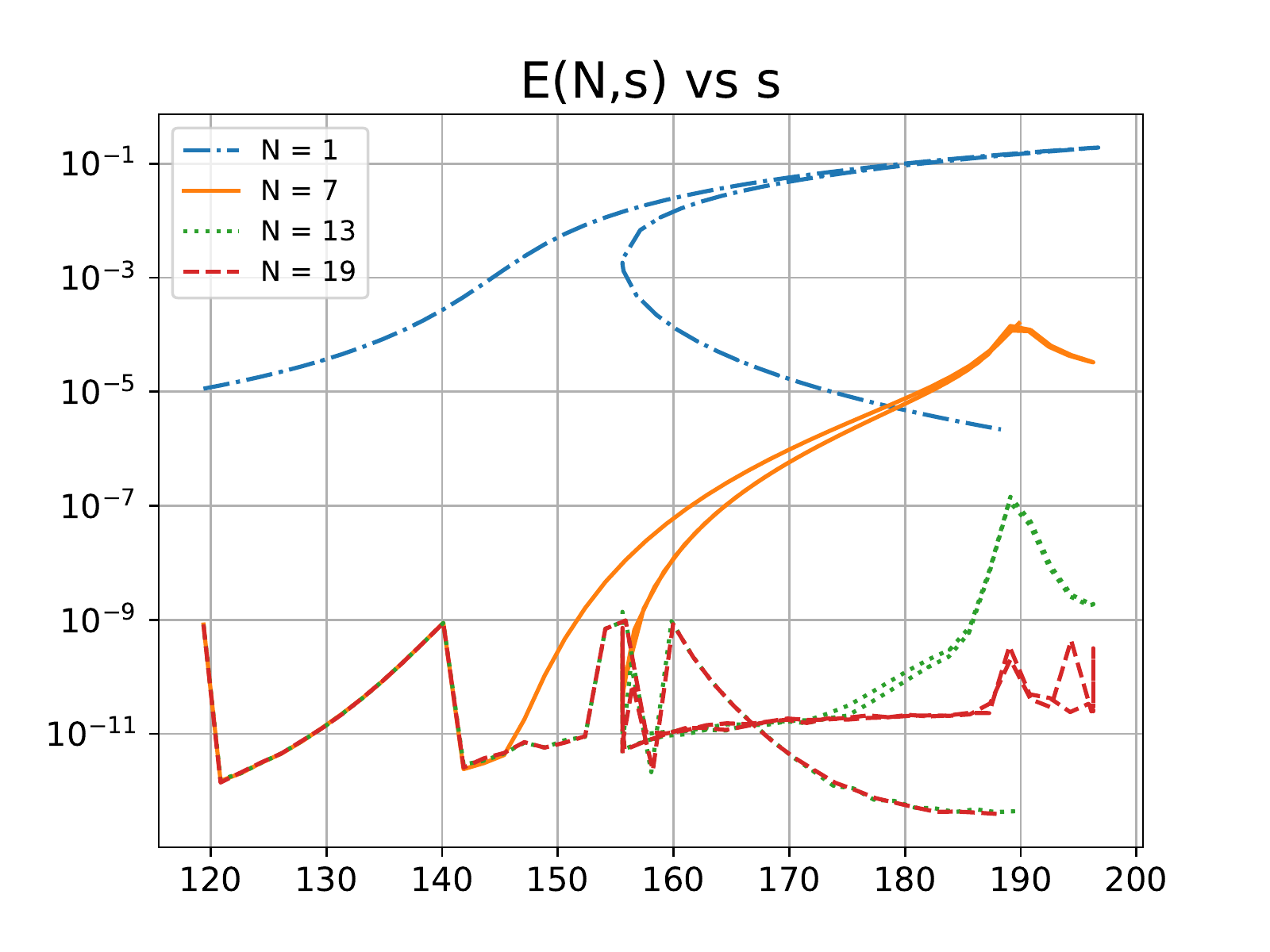}
\includegraphics[scale=0.39]{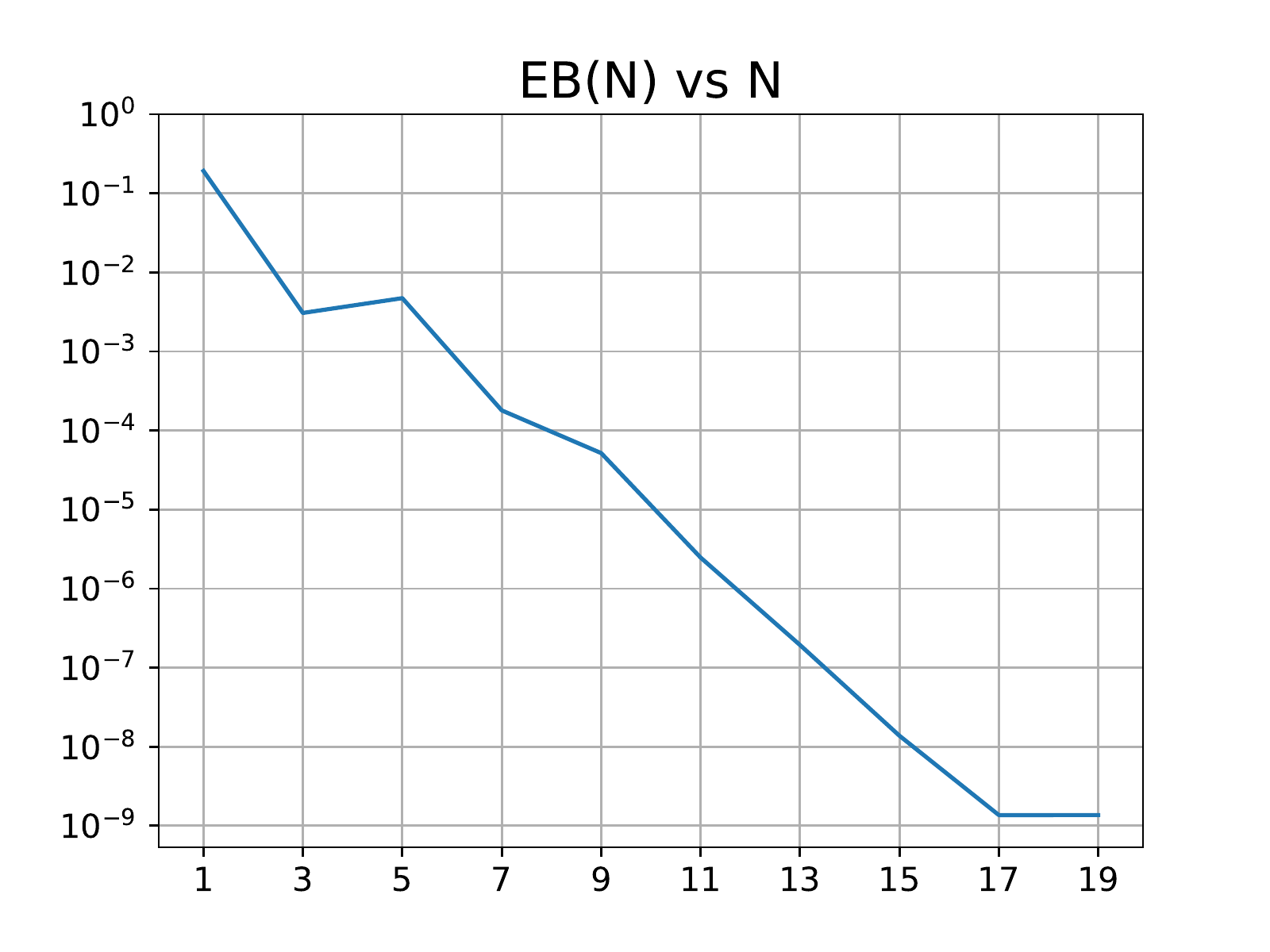}
\caption{Results of the numerical experiments for computing branches of periodic solutions to \eqref{eq:dae2} with $F$ defined by \eqref{eq:cantileverbeam}. (Left) The $L^2$ residual error $E(N,s)$ vs the continuation parameter $s$ for various values of $N$.  (Right) Convergence plot showing $EB(N)$ (the maximum of $E(N,s)$ for all values of $s$ in the continuation loop) vs the number of harmonics $N$.  }
\label{fig:cantileverbeam}
\end{figure}


\section{Conclusion}\label{sec:conclusion}

In this paper we have analyzed the convergence of the harmonic balance method for Hilbert space valued DAEs.  We have proved that, under mild smoothness and isolatedness assumptions, the approximate solution computed with the harmonic balance method converges to the exact solution with a rate determined by the rate of convergence of the Fourier series of the exact solution and the structure of the DAE.  Additionally, we have shown that an inexact Newton method used to compute the approximate solution determined by the harmonic balance method converges with the expected rate of convergence.  These theoretical results are empirically verified with several examples from structural mechanics and circuit modeling.  In future work we plan to focus on relaxing the smoothness hypotheses required in our theory and studying additional aspects such as stability of the harmonic balance method with respect to various types of perturbations.



\bibliographystyle{siamplain}
\bibliography{references}
\end{document}